\documentclass[onefignum,onetabnum]{siamart171218}
\usepackage{lipsum}
\usepackage{amsfonts}
\usepackage{graphicx}
\usepackage{epstopdf}
\usepackage{algorithmic}
\usepackage{braket}
\usepackage{tikz}
\usepackage{booktabs}   
\usetikzlibrary{quantikz2}
\usepackage{booktabs}
\usepackage{multirow}
\usepackage[caption=false]{subfig}
\ifpdf
  \DeclareGraphicsExtensions{.eps,.pdf,.png,.jpg}
\else
  \DeclareGraphicsExtensions{.eps}
\fi

\usepackage{bm}
\newcommand{\bu}{\bm{u}}

\newcommand{\bx}{\bm{x}}
\newcommand{\by}{\bm{y}}
\newcommand{\bY}{\bm{Y}}
\newcommand{\D}{\mathrm{d}}
\newcommand{\bmm}{\bm{m}}

\newsiamremark{remark}{Remark}
\newsiamremark{hypothesis}{Hypothesis}
\crefname{hypothesis}{Hypothesis}{Hypotheses}
\newsiamthm{claim}{Claim}

\headers{Quantum-assisted Bayesian PDE inversion}{D. An, Y. Li, P. Tang, and Y. Xiong}

\title{A quantum-assisted framework for PDE-based Bayesian inverse problems \thanks{Submitted to the editors. \funding{This work was funded by the National Key Research and Development Program of China (No. 2024YFE0102500), the Quantum Science and Technology - National Science and Technology Major Project (No. 2024ZD0301900), the National Natural Science Foundation of China (Nos. 62302346, 12622127, 12571413), the Hubei Provincial Natural Science Foundation of China (Nos. 2024AFA045), the Hubei major science and technology projects (No. ZDZX202600005), the Fundamental Research Funds for the Central Universities (No. 2042025kf0023), and the Fundamental Research Funds for the Central Universities, Peking University.}}}

\author{Dong An\thanks{Beijing International Center for Mathematical Research, Peking University, Beijing 100871, China (\email{dongan@pku.edu.cn}).}
\and Yinan Li\thanks{School of Artificial Intelligence \& Hubei Center for Applied Mathematics \& Hubei Key Laboratory of Computational Science, Wuhan University, Wuhan 430072, China; Wuhan Institute of Quantum Technology, Hubei 430206, China (\email{yinan.li@whu.edu.cn}).}
\and Pucheng Tang\thanks{School of Artificial Intelligence, Wuhan University, Wuhan 430072, China (\email{pucheng\_tang@whu.edu.cn}).}
\and Yunfeng Xiong\thanks{School of Mathematical Sciences \& Laboratory of Mathematics and Complex Systems, Ministry of Education, Beijing Normal University, Beijing 100875, China (\email{yfxiong@bnu.edu.cn}).}}
  
\usepackage{amsopn}

\makeatletter
\newcommand*{\addFileDependency}[1]{
  \typeout{(#1)}
  \@addtofilelist{#1}
  \IfFileExists{#1}{}{\typeout{No file #1.}}
}
\makeatother

\ifpdf
\hypersetup{
  pdftitle={Quantum for BayesOpt},
  pdfauthor={D. An, Y. Li, P. Tang, and Y. Xiong}
}
\fi

\begin{document}
\maketitle

\begin{abstract}
Quantum computing offers potential advantages for solving partial differential equations (PDEs). However, most existing quantum PDE solvers primarily focus  on preparing quantum states for solutions, while the efficient recovery of classical information from these states remains less explored. 
Motivated by the readout limitation, we propose a quantum-classical hybrid framework for Bayesian PDE inversion problems: The quantum processor evolves the PDE and evaluate the loss function with sampling noises, while the classical computer tunes the hyper-parameters in the Gaussian Process Regression to explore the next trial candidate. To match the quantum solvers for linear and semi-linear autonomous evolution PDEs, we suggest to use a normalized quantum-state loss as the data-misfit function and evaluate the new misfit by combining quantum PDE solvers with the Hadamard test, thereby allowing us to extract useful classical information using only a limited number of quantum state copies without reconstructing the full solution vector. The analysis of error propagation and overall complexity of loss evaluation under a prescribed accuracy shows that the new data-misfit function outperforms the conventional $L^2$-loss under quantum measurements. Quantum-circuit simulations of 1-D and 2-D linear convection–diffusion equations under approximate and finite-sampling loss evaluations, together with classical numerical experiments on a nonlinear forced viscous Burgers equation, demonstrate the feasibility of the proposed approach for parameter inversion even when the loss evaluations are affected by sampling noise. This framework may provide a viable quantum-assisted scheme for PDE-based inverse problems and elucidate the potential of quantum PDE algorithms in addressing a complete quantum-to-end optimization stack. 
\end{abstract}

\begin{keywords}
  Quantum computing, partial differential equations, inverse problems, Bayesian optimization.
\end{keywords}

\begin{AMS}
  81P68, 35R30, 68Q12.
\end{AMS}

\section{Introduction}
The recent progress of fault-tolerant quantum algorithms has witnessed a wide range of applications in solving partial differential equations (PDEs), arising from quantum physics \cite{berry2015simulating}, fluid dynamics \cite{liu2021efficient,JinLiu2024,jin2025inhomogeneous,AlipanahZhangYao2025,MengZhongXu2024}, engineering problems \cite{MontanaroPallister2016,AlkadriKharaziWhaleyMandadapu2025}, seismic modeling \cite{SchadeBöschHaplaFichtner2024,bosch2025quantum}
and  control theory \cite{JinLiu2026}. Leveraging laws of quantum mechanics such as superposition and entanglement, a quantum processor can potentially manipulate exponentially large degrees of freedom  \cite{feynman2018simulating,nielsen2010quantum}. This feature offers a promising approach for reducing both runtime and memory complexity for certain large-scale computational tasks \cite{arute2019quantum,divincenzo1995quantum}, especially for PDEs where the solution has large higher-order derivatives or large spatial dimension~\cite{SchadeBöschHaplaFichtner2024,AlkadriKharaziWhaleyMandadapu2025}. 

The basic design of quantum PDE algorithms is built on linear-algebraic and dynamical-simulation primitives, such as linear system solving \cite{harrow2009quantum,MoralesPiraSchleich2025}, Hamiltonian simulation \cite{berry2015simulating,Low2019Hamiltonian,chen2026timedependenthamiltoniansimulationoptimal}, and quantum signal processing (QSP) \cite{low2017optimal}. Related transformation frameworks, including quantum singular value transformation (QSVT) \cite{gilyen2019quantum} and quantum eigenvalue transformation (QET) \cite{low2026quantum}, further provide powerful tools for polynomial transformations of encoded operators. These foundational frameworks motivated the use of quantum algorithms as computational components for solving PDEs, including quantum linear system algorithms (QLSA) for discretized linear systems \cite{Berry2014HighOrder,berry2017quantum,childs2021high,DongLiXue2025}, the linear combination of Hamiltonian simulation (LCHS) techniques to simulate nonunitary dynamics \cite{an2023linear,low2025optimalquantumsimulationlinear}, and Schr\"odingerization \cite{jin2023quantum,JinLiu2024,jin2025schrodingerization,jin2025inhomogeneous,ma2026schrodingerization} which systematically transforms linear PDEs into Schr\"odinger-type dynamics suitable for quantum simulation \cite{jin2024quantum}. Several extensions have been developed for some specific nonlinear PDEs, based on either the Carleman linearization \cite{liu2021efficient,wu2025quantum,jennings2025quantumalgorithmsgeneralnonlinear}, the level set method \cite{Jin2024Quantumalgorithmsfornonlinear}, the Liouville formulation \cite{Li2026,JinLiu2026}, the Koopman/Koopman–von Neumann approaches \cite{Joseph2020KoopmanvonNeumann,jennings2026quantumkoopmanalgorithms}, and Young measures computation \cite{jin2026quantumalgorithmsyoungmeasures}. These endeavors establish profound connections between classical PDE discretizations and quantum primitives, while highlighting the potential of quantum algorithms for resolving the curse of dimensionality compared with their classical counterparts.

Despite these substantial algorithmic developments, most existing frameworks output the solutions of the underlying PDEs as quantum states rather than classical solution vectors as classical PDE solvers produced. For real-world applications, however, classical solution vectors are more easily utilized. A naive approach is to reconstruct the classical solution vector via quantum state tomography. Nevertheless, these postprocessing techniques come with exponential and prohibitive costs \cite{haah2016sample} which may destroy the quantum speed-up during the solution state preparation phase. Although it is possible to extract problem-specific quantities of interest, including physical observables  \cite{Jin2024Quantumalgorithmsfornonlinear,MengZhongXu2024,JinLiu2026} and  the misfit function in seismic applications \cite{SchadeBöschHaplaFichtner2024,bosch2025quantum}, from quantum states within an acceptable cost by several post-processing techniques, the apparent quantum speedups might still disappear when one takes the effect of solution accuracy into account \cite{MontanaroPallister2016}. The readout limitation urgently demands the development of a more flexible algorithmic framework that allows useful information to be extracted using only a limited number of quantum-state copies, and the task should have some tolerance on the noisy outputs. The latter is of great relevance when comparing the quantum and classical complexities of producing a classical answer to PDE-based inverse problems \cite{MontanaroPallister2016}.

In this paper, we aim to establish a noise-tolerant quantum-assisted framework for the Bayesian PDE inversion \cite{stuart2010inverse,tarantola2005inverse}, the target of which is to infer unknown physical parameters of PDEs  from observed data through repeated evaluations of the forward PDE model. It adopts a quantum-classical hybrid framework: the quantum processor evolves the PDE and outputs the loss function (with certain noises), while the classical computer tunes the hyper-parameters in the Gaussian Process Regression (GPR) to explore the next trial candidate. Our motivation comes from basic features of the Bayesian PDE inversion, as the central computational task is to evaluate a data-misfit function between observed data and forward solutions, instead of reconstructing the full solution vector  \cite{frazier2018bayesian,garnett2023bayesian}. Meanwhile, the GPR naturally utilizes the uncertainty inherent in noisy results for a richer exploration of the state space. Since objective-function evaluation is typically the dominant cost in classical computation and often requires repeated high-fidelity simulations of the forward PDE, quantum algorithms may provide an efficient alternative by estimating overlaps between quantum states without performing full amplitude readout \cite{nielsen2010quantum}. However, this perspective raises two key questions. First, since the quantum output encodes the solution as a quantum state, {\sl how should one define loss functions that are compatible with quantum-state measurements}? Second, {\sl how many copies of the quantum state, or equivalently how many circuit repetitions, are required to reconstruct a reliable loss estimate without eliminating the potential quantum advantage}?

To address these issues, we suggest replacing the conventional $L^2$-loss (which is called the physical loss hereafter) by  a normalized quantum-state loss as the misfit function within the Bayesian inversion framework. A quantum-state-based formulation is given to evaluate the new misfit function by combining quantum PDE solver \cite{an2023linear,liu2021efficient,an2025quantum,gutierrez2026quantum} with the Hadamard test to achieve an efficient approximation. This framework is compatible with different quantum solvers for linear evolution PDEs, and can be naturally extended to the semi-linear PDEs, where the Carleman linearization lifts the semi-discretized system to a truncated, higher-dimensional linear dynamical system \cite{liu2021efficient} and leads to a  representation analogous to that of the linear case. By analyzing error propagation through each computational step, we derive the complexity of loss evaluation under a prescribed accuracy and show that the normalized quantum-state loss outperforms the physical loss when the  quantum measurements are incorporated. Numerical experiments further demonstrate that Bayesian inversion can recover meaningful parameter estimates even when the loss evaluations are affected by sampling noise.  These results may elucidate the potential of quantum PDE solvers in addressing a complete quantum-to-end optimization stack, and may provide a viable quantum-assisted scheme for related PDE-based inverse problems like the seismic exploration \cite{SchadeBöschHaplaFichtner2024,bosch2025quantum}.

The remainder of the paper is organized as follows. In \Cref{sec:classical}, we review PDE-based inverse problems within a Bayesian inference framework. \Cref{sec:quantum} presents the quantum algorithms for PDE forward solution approximation and objective function evaluation. \Cref{sec:error_complex} provides the error and complexity analysis of the proposed framework. \Cref{sec:numerical} reports numerical results for both linear and nonlinear examples to validate the proposed method. Finally, \Cref{sec:conclusion} concludes the paper and discusses future research directions.

\section{Classical Bayesian PDE inversion} \label{sec:classical}
As a preliminary, we introduce the basic ingredients of the Bayesian optimization (BayesOpt) \cite{idier2013bayesian,chiappetta2026efficient}. The Bayesian component arises from GPR posterior distribution over the objective function, which quantifies surrogate uncertainty and guides the selection of candidate parameters through an acquisition function. 


\subsection{Notations}
Let us consider an evolution PDE with model parameters $\bmm =(m_1, \dots, m_d)$ with a feasible set $\mathcal{M}$, 
\begin{equation} 
    \label{eq:pde2}
    \frac{\partial u(\bx, t;\bmm)}{\partial t} = \mathcal{F}(u(\bx, t;\bmm), t;\bmm), \qquad u(\bx, 0;\bmm)=u_0(\bx),
\end{equation}
where the operator $\mathcal{F}$ may be linear or nonlinear. The forward problem \eqref{eq:pde2} establishes a mapping $\mathcal{G}: \mathcal{M} \to L^2(\mathbb{R})$: $\bmm \to \mathcal{G}(\bmm)$ and 
\begin{equation}
\mathcal{G}(\bmm) = u(\bx, T; \bmm) + \varepsilon,
\end{equation}
where $\varepsilon$ is the noise term.

Now we state an inversion problem:  Suppose we know the initial condition $\bu_0(\bx)$ and the terminal time $T > 0$, with little knowledge on the model parameters $\bmm$. We only have some discrete observations $\bm{u}_{\rm obs}=(u_{\rm obs}(\bx_1),\dots,u_{\rm obs}(\bx_{n_x}))^T$ at  time $T$. The target is to  infer the optimal parameter $\bmm^*$ to fit the observed data $\bm{u}_{\rm obs}$.

To this end, we need $u(\bx, t;\bmm)$ at discrete points $(\bx_1, \dots, \bx_{n_x})$ (either exact or numerical solutions). Denote the solution vector by
\begin{equation}
    \bu(t;\bmm)=(u(\bx_1,t;\bmm),\dots,u(\bx_{n_x},t;\bmm))^T\in\mathbb{R}^{n_x}.
\end{equation}
Now consider the $L^2$-misfit function (also called the physical loss),
\begin{equation}\label{physical_loss}
\begin{split}
\mathcal{L}_{\textup{phys}}(\bmm) &=  \Vert \mathcal{G}(\bmm)  - \bu_{\rm obs} \Vert^2  =  \sum_{i=1}^{n_x} | u(\bx_i, T; \bmm) + \varepsilon_i - u_{\rm obs}(\bx_i) |^2 \\
&= \|\mathcal{G}(\bmm)\|^2 + \|\bm{u}_{\rm obs}\|^2 - 2 \mathrm{Re}\braket{\mathcal{G}(\bmm),\bm{u}_{\rm obs}},
\end{split}
\end{equation}
and seek the optimal parameter $\bmm^\ast$
\begin{equation}
    \bm{\bmm}^\ast = \underset{\bmm\in\mathcal{M}}{\arg\min}\, \Vert \mathcal{G}(\bmm)  - \bu_{\rm obs} \Vert^2.
\end{equation}

Equivalently, this minimization problem can be reformulated as the maximization of an objective function $f(\bmm)$ defined through an exponential transformation:
\begin{equation}
    f(\bmm) = \exp\left( -\frac{1}{n_x\cdot\gamma} \|\mathcal{G}(\bmm)-\bm{u}_{\rm obs}\|^2 \right),
    \label{eq:misfit}
\end{equation}
where $\gamma>0$ is a fixed annealing parameter. 

\subsection{The framework of Bayesian inversion}

The general principle of the BayesOpt framework is trial and error. In each trial, each objective function evaluation often requires the operator $\mathcal{G}(\bmm)$ to be realized through a high-fidelity numerical simulation or a costly experimental procedure \cite{frazier2018bayesian,garnett2023bayesian}. This makes Bayesian optimization particularly suitable for settings in which only a limited number of objective function evaluations can be afforded.

\subsubsection{Surrogate model by GPR}
The starting point of BayesOpt is to construct a probabilistic surrogate model from previously evaluated samples and then use this surrogate to guide the selection of new trial points.

First, we collect a training dataset consisting of $n$ evaluated samples:
\[
    \mathcal{D}_n
    = \big\{(\bmm_1,f(\bmm_1)),(\bmm_2,f(\bmm_2)),\dots,(\bmm_n,f(\bmm_n))\big\}.
\]

A Gaussian process (GP) is a collection of random variables such that any finite collection of function values $\{f(\bmm_1),f(\bmm_2),\dots,f(\bmm_n)\}$ follows a multivariate Gaussian distribution. Accordingly, GP regression places the prior distribution \cite{rasmussen2003gaussian}
\[
    f(\bmm) \sim \mathcal{GP}\bigl(\mu(\bmm),k(\bmm,\bmm^{\prime})\bigr),
\]
where  $\mu(\bmm)$ is the mean function, which is typically chosen as $\mu(\bmm) \approx \frac{1}{n}\sum_{i=1}^n f(\bmm_i)$. $k(\bmm,\bmm^{\prime})$ is the covariance kernel that characterizes the correlation between function values at different input parameters. 

A common choice is the radial basis function (RBF) kernel augmented with a white-noise kernel:
\begin{equation}
\begin{split}
    k(\bmm,\bmm')
    &= \sigma_{\rm RBF}^2
    \exp\left(-\sum_{i=1}^d \frac{|m_i-m_i^{\prime}|^2}{2\ell_i^2}\right)
    + \sigma_{\rm white}^2\delta_{\bmm,\bmm'}.
\end{split}
\end{equation}
Here $\sigma_{\rm RBF}^2$ denotes the signal variance, $(\ell_1, \dots, \ell_d)$ are the length-scale parameters, $\sigma_{\rm white}^2$ is the noise variance, and $\delta_{\bmm,\bmm'}$ denotes the Kronecker delta, which is equal to one when the two inputs coincide and zero otherwise. 

Let $\bm{\eta}=\{\sigma_{\rm RBF},\sigma_{\rm white},\ell_1, \dots, \ell_d\}$ denote the collection of hyperparameters, which can be estimated by maximizing the log marginal likelihood or, equivalently, by minimizing the negative log marginal likelihood:
\[
    \bm{\eta}^{*} = \operatorname*{arg\,max}_{\bm{\eta}}
    \big[\log p\bigl(\bm{v}\mid\bmm_{1:n},\bm{\eta}\bigr)\big]
    \Leftrightarrow
    \operatorname*{arg\,min}_{\bm{\eta}}
    \big[-\log p\bigl(\bm{v}\mid\bmm_{1:n},\bm{\eta}\bigr)\big].
\]
where $\bm{v}=(f(\bmm_1),\ldots,f(\bmm_n) )^T$ and $\bmm_{1:n}=(\bmm_1,\ldots,\bmm_n)$. Based on the covariance matrix $K_{\bm{\eta}}$, whose entries are given by $(K_{\bm{\eta}})_{ij}=k(\bmm_i,\bmm_j)$ for $1\leq i,j\leq n$, the negative log marginal likelihood is
\[
    -\log p(\bm{v}\mid\bmm_{1:n},\bm{\eta})
    = \frac{1}{2}(\bm{v}-\bm{\mu})^T
    K_{\bm{\eta}}^{-1}(\bm{v}-\bm{\mu})
    + \frac{1}{2}\log\lvert K_{\bm{\eta}}\rvert
    + \frac{n}{2}\log(2\pi),
\]
where $\bm{\mu}=(\mu(\bmm_1),\dots,\mu(\bmm_n))^T$. The first term measures the data-fitting error with respect to the covariance structure specified by the GP, while the second term penalizes model complexity through the covariance matrix. 

\subsubsection{Bayesian optimization}
Once the GPR surrogate model has been constructed, an acquisition function $\alpha(\bmm):\mathbb{R}^d\rightarrow\mathbb{R}$ is used to select the next candidate parameter $\bmm_{\rm post}$ based on the GP posterior distribution. A commonly used acquisition function is expected improvement (EI), which quantifies the expected improvement over the best objective value observed so far. Here, we first consider the ideal setting in which the noise-free observation $\bu_{\rm obs}$ is available.

At each iteration, the current best point is the one with the largest observed objective value. We define  
$f^*_n = \max(f(\bmm_1),f(\bmm_2),\dots,f(\bmm_n))$
as the current best value. Suppose that one additional evaluation is permitted at any location $\bmm$, where $f(\bmm) \sim\mathcal{N}(\mu_n(\bmm),\sigma^2_n(\bmm))$. The EI function is defined by
\begin{equation} \label{eq:ei}
    \alpha_{\rm EI}(\bmm) := \mathbb{E}_n \big[ [f(\bmm) - f_n^*]^+ \big] \quad \mathrm{where} \quad a^+=\max(a,0).
\end{equation}

This criterion favors points that are expected to improve upon the best observed objective value. Let $y$ denote a realization of the posterior predictive random variable $f(\bmm) |\mathcal{D}_n$. Under the GP posterior distribution, \eqref{eq:ei} can be written as
\[
    \alpha_{\mathrm{EI}}(\bmm) = \int_{f_n^*}^{\infty}(y - f_n^*) \cdot \frac{1}{\sigma_n(\bmm)\sqrt{2\pi}} \exp\left( -\frac{(y - \mu_n(\bmm))^2}{2\sigma_n^2(\bmm)} \right) \D y.
\]
Through integration by parts, EI admits the closed-form expression:
\[
    \alpha_{\mathrm{EI}}(\bmm) = \Delta_n(\bmm)\Phi\!\left(\frac{\Delta_n(\bmm)}{\sigma_n(\bmm)}\right) + \sigma_n(\bmm)\phi\!\left(\frac{\Delta_n(\bmm)}{\sigma_n(\bmm)}\right),
\]
where $\Delta_n(\bmm) = \mu_n(\bmm) - f_n^*$, and $\phi(\cdot)$ and $\Phi(\cdot)$ denote the standard Gaussian probability density function and cumulative distribution function, respectively.

Finally, the next evaluation point is selected by maximizing the EI function: 
\[
    \bmm_{\rm post} = \underset{\bmm\in\mathcal{M}}{\arg\max}\, \alpha_{\rm EI}(\bmm).
\]

After each evaluation, the dataset $\mathcal{D}_n$ and the GPR surrogate are updated. This iterative procedure continues until a prescribed stopping criterion is satisfied, such as reaching the maximum number of iterations or convergence of the acquisition function.

Based on the BayesOpt surrogate model and parameter inference procedure, the overall Bayesian inversion framework can be summarized as follows:
\begin{description}
    \item[Step 1.] Construct a GP model $f(\bmm) \sim \mathcal{GP}\big(\mu(\bmm), k(\bmm,\bmm^{\prime})\big)$ from the dataset $\mathcal{D}_n$ to probabilistically characterize the objective function.
    
    \item[Step 2.] Train the GPR hyperparameters by minimizing the negative log marginal likelihood.
    
    \item[Step 3.] Construct an acquisition function to select the next evaluation point $\bmm_{\rm post}$ by quantifying the expected utility of each candidate point.

    \item[Step 4.] Once $\bmm_{\rm post}$ has been selected, evaluate $f(\bmm_{\rm post})$ using a high-fidelity solver, augment the dataset according to $\mathcal{D}_{n+1} =  \mathcal{D}_n \cup \{(\bmm_{\rm post}, f(\bmm_{\rm post}))\}$, and repeat the procedure from Step 1 until the maximum number of iterations is reached.
\end{description}

\section{Quantum-assisted Bayesian PDE inversion} \label{sec:quantum}

\begin{figure}[ht]
    \centering
    \includegraphics[width=\linewidth,height=0.6\linewidth]{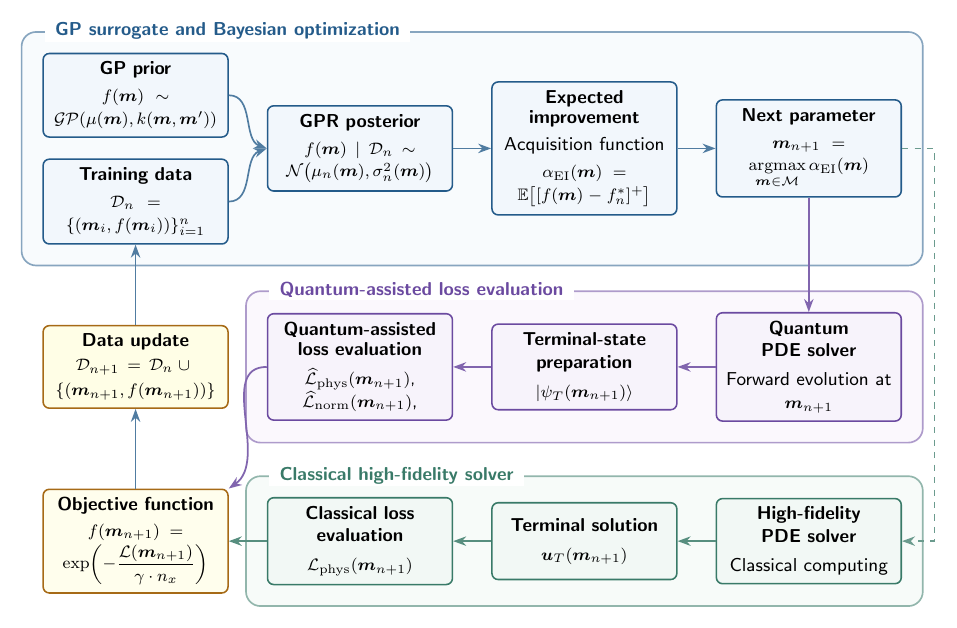}
    \caption{Workflow of the proposed quantum-assisted Bayesian PDE inversion framework.}
    \label{fig:quantum_bayesian_workflow}
\end{figure}

For classical BayesOpt, the main computational burden is the repeated evaluation of the objective function in \eqref{eq:misfit}, with each evaluation requiring a high-fidelity forward PDE solver \cite{tarantola2005inverse}. Regarding the potential advantage of quantum algorithms for PDEs, we explore a quantum-assisted strategy for evaluating the objective function through quantum-state operations without explicitly reconstructing the complete solution, while the surrogate model by GPR is still constructed on a classical computer. The overall workflow is illustrated in \cref{fig:quantum_bayesian_workflow}.

Most existing quantum PDE solvers prepare quantum states containing approximations to the terminal solution $\bu_T(\bmm)$, possibly together with solver-dependent ancillary and garbage components \cite{harrow2009quantum,MontanaroPallister2016,childs2017quantum,liu2021efficient,JinLiu2024,Li2026}. Here, we briefly summarize three representative settings, with further details provided in the supplementary note.

\begin{enumerate}

\item[(1)] For the linear autonomous equation $\frac{\D\bu(t;\bmm)}{\D t}=A(\bmm)\bu(t;\bmm)$, the terminal solution is $\bu_T(\bmm)=e^{TA(\bmm)}\bu_0$. The corresponding matrix-exponential action can, for example, be approximated by the {P}adé-based algorithm \cite{DongLiXue2025}.

\item[(2)] For the linear non-autonomous equation, $\frac{\D\bu(t;\bmm)}{\D t}=A(t;\bmm)\bu(t;\bmm)+\boldsymbol{b}(t;\bmm)$, let
$\Phi_{\bmm}(t,s)=\mathcal{T}\exp\bigl(\int_s^t A(\tau;\bmm)\,\D\tau\bigr)$
denote the evolution operator generated by $A(t;\bmm)$, where $\mathcal{T}$ denotes time ordering. The variation-of-constants formula gives
\begin{equation*}
    \bu_T(\bmm) = \Phi_{\bmm}(T,0)\bu_0 + \int_0^T \Phi_{\bmm}(T,s)\boldsymbol{b}(s;\bmm)\,\D s.
\end{equation*}
Such inhomogeneous non-autonomous systems can be tackled by quantum computing using linear combinations of Hamiltonian simulation (LCHS) \cite{an2023linear} or the Schr{\"o}dingerization approach under certain assumptions \cite{jin2024quantum}.

\item[(3)] After spatial discretization, suppose that the nonlinear PDE gives rise to the possibly polynomial system
$\frac{\D\bu(t;\bmm)}{\D t} =F_0(t;\bmm) + \sum_{\alpha=1}^{\infty} F_{\alpha}(t;\bmm)\bu(t;\bmm)^{\otimes\alpha}$, where $F_0(t;\bmm)\in\mathbb{R}^{{n_x}}$ represents the external forcing and $F_{\alpha}(t;\bmm)\in\mathbb{R}^{{n_x}\times {n_x^{\alpha}}}$. Carleman linearization \cite{liu2021efficient} introduces the lifted vector $\bY=(\bu,\bu^{\otimes2},\dots)^T$, for which the nonlinear system is represented by the infinite-dimensional inhomogeneous linear system
\begin{equation*}
 \frac{\D}{\D t}
\begin{pmatrix}
    \bu \\ \bu^{\otimes 2} \\ \bu^{\otimes 3} \\ \vdots
\end{pmatrix} =
\begin{pmatrix}
A_1^1(t;\bmm) & A_1^2(t;\bmm)  & \cdots \\
A_2^1(t;\bmm) & A_2^2(t;\bmm)  & \cdots \\
0 & A_3^2(t;\bmm)  & \cdots \\
\vdots & \vdots  & \ddots
\end{pmatrix}
\begin{pmatrix}
    \bu \\ \bu^{\otimes 2} \\ \bu^{\otimes 3} \\ \vdots
\end{pmatrix} +
\begin{pmatrix}
    F_0(t;\bmm) \\  0 \\ 0 \\ \vdots
\end{pmatrix}.
\end{equation*}
Here, for \(j\geq 1\), the block mapping \(\bu^{\otimes k}\) to the evolution equation for \(\bu^{\otimes j}\) is
\begin{equation*}
    A_j^k(t;\bmm) =  \sum_{\ell=0}^{j-1} I_{n_x}^{\otimes \ell}
    \otimes F_{k-j+1}(t;\bmm) \otimes I_{n_x}^{\otimes (j-1-\ell)},
    \quad k\geq \max\{1,j-1\},
\end{equation*}
and \(A_j^k(t;\bmm)=0\) for \(k<\max\{1,j-1\}\). In particular, the blocks \(A_j^{j-1}(t;\bmm)\) for \(j\geq 2\) are induced by the forcing term \(F_0(t;\bmm)\), while the forcing term in the original equation for \(\bu\) appears in the inhomogeneous vector. Truncating the lifted vector at order \(N\) gives
\(\bY_N=(\bu,\bu^{\otimes 2},\dots,\bu^{\otimes N})^T\), whose first-order component provides an approximation to \(\bu_T(\bmm)\).

\end{enumerate}

Quantum PDE solvers considered here require coherent access to the initial data. We assume that the normalized amplitude encoding $\ket{\bu_0}$ can be prepared by an initial-state-preparation oracle $O_{\rm prep}:\ket{0}\rightarrow\ket{\bu_0}$. Although the quantum PDE solvers applicable to these three settings use different internal constructions, the subsequent loss evaluation requires only a common coherent-output form \cite{an2023linear,jin2023quantum,childs2017quantum}. 

Let $\bu_T(\bmm)=\bigl(u(\bx_1,T;\bmm),\dots,u(\bx_{n_x},T;\bmm)\bigr)^T$ denote the exact terminal solution and $\widetilde{\bu}_T(\bmm)$ denote the unnormalized approximation encoded by the selected quantum solver. Suppose that the forward solver produces an approximation satisfying $\left\|\widetilde{\bu}_T(\bmm)-\bu_T(\bmm)\right\| \leq \epsilon_{\mathrm{app}}$. The corresponding normalized amplitude encoding of the approximate terminal solution $\ket{\widetilde{\bu}_T(\bmm)}$ is defined by
\begin{equation*}
    \ket{\widetilde{\bu}_T(\bmm)} = \frac{1}{\left\|\widetilde{\bu}_T(\bmm)\right\|} \sum_{j=0}^{{n_x}-1} \widetilde{u}_{T,j+1}(\bmm)\ket{j},
\end{equation*}
where $\widetilde{u}_{T,j+1}(\bmm)$ is the $(j+1)$th component of $\widetilde{\bu}_T(\bmm)=\bigl(\widetilde{u}_{T,1}(\bmm),\dots,\widetilde{u}_{T,{n_x}}(\bmm)\bigr)^T$.

Suppose that $U_{\mathrm{solver}}(\bmm)$ denotes the complete unitary implementation of the selected quantum PDE solver, including the initial-state preparation and all solver-specific coherent operations, and an $a$-qubit auxiliary register identifies the desired PDE solution branch. Its action is assumed to satisfy
\begin{equation} \label{eq:output_state}
    \ket{\psi_T(\bmm)} := U_{\mathrm{solver}}(\bmm)\ket{0}
    = \frac{\left\|\widetilde{\bu}_T(\bmm)\right\|} {\lambda(\bmm)\left\|\bu_0\right\|} \ket{0}^{\otimes a}\ket{\widetilde{\bu}_T(\bmm)} + \ket{\Phi_T^\perp(\bmm)},
\end{equation}
where $\ket{\Phi_T^\perp(\bmm)}$ denotes the generally garbage component satisfying $P_u\ket{\Phi_T^\perp(\bmm)}=0$, with $P_u:=\ket{0}\!\bra{0}^{\otimes a}\otimes I$ denoting the projector onto the successful branch of the quantum solver. The solver-dependent scaling factor $\lambda(\bmm)>0$ incorporates the solver-specific scaling required to encode the terminal solution, ensuring that $\|\widetilde{\bu}_T(\bmm)\| / (\lambda(\bmm)\|\bu_0\|) \leq 1.$


\subsection{Loss function evaluation via quantum measurement}
A key challenge comes from the fact that quantum output $\ket{\psi_T}$ contains the desired auxiliary qubits with the orthogonal garbage component $\ket{\Phi^\perp_T(\bmm)}$ in \eqref{eq:output_state}. Consequently, neither the full classical solution vector nor its norm can be accessed directly from the quantum state. In this subsection, we show how the Hadamard test and the success probabilities of the quantum PDE solver circuit can be combined to evaluate the loss function.
Throughout this section, we distinguish among three notations for each loss function.
\begin{enumerate}
    \item[(1)] $\mathcal{L}(\bmm)$: the exact loss constructed from the exact solution $\bu_T(\bmm)$;
    \item[(2)] $\widetilde{\mathcal{L}}(\bmm)$: the infinite-shot quantum loss constructed from the deterministic solver approximation $\ket{\widetilde{\bu}_T(\bmm)}$ and the exact quantum probabilities;
    \item[(3)] $\widehat{\mathcal{L}}(\bmm)$: the finite-shot estimator obtained by replacing the exact quantum probabilities with their empirical frequencies.
\end{enumerate}

\subsubsection{Physical loss function}

The most straightforward choice is the physical loss $\mathcal{L}_{\textup{phys}}(\bmm) =  \Vert \bu_T(\bmm)  - \bu_{\rm obs} \Vert^2$ adopted in the classical PDE inversion,
which retains both the overall magnitude and the normalized profile of the terminal solution. 

To evaluate $\widetilde{\mathcal{L}}_{\rm phys}(\bmm)$ using quantum algorithms, we define the normalized amplitude encoding of the observation $\bu_{\rm obs}=(u_{{\rm obs},1},\dots,u_{{\rm obs},{n_x}})^T$ as $\ket{\bu_{\rm obs}}.$ The corresponding observation state in the successful branch is 
\begin{equation} \label{eq:obs_state_u} 
    \ket{\psi_{\rm obs}} = \ket{0}^{\otimes a}
    \otimes \left( \sum_{j=0}^{{n_x}-1} \frac{1}{\|\bu_{\rm obs}\|} u_{{\rm obs},j+1}\ket{j} \right). 
\end{equation}

We assume access to an observation-state oracle $O_{\rm obs}$ satisfying $\ket{\psi_{\rm obs}}=O_{\rm obs}\ket{0}$. Such an assumption is particularly relevant when the observational data possess exploitable structures, for example, when they are obtained from a sparse set of sensor measurements \cite{zhang2022quantum,ramacciotti2024simple,mao2024toward,li2025nearly,alexanderian2021optimal}, as commonly considered in Bayesian inverse problems \cite{alexanderian2021optimal,alexanderian2014optimal}. Recall that the complete solver output is $\ket{\psi_T(\bmm)}=U_{\rm solver}(\bmm)\ket{0}$. Only the component of $\ket{\psi_T(\bmm)}$ with the auxiliary registers in  $\ket{0}^{\otimes a}$ contributes to the overlap with $\ket{\psi_{\rm obs}}$. The remaining garbage component $\ket{\Phi_T^\perp(\bmm)}$ is orthogonal to the observation state and therefore does not contribute to the overlap in the data misfit function.

To estimate the overlap, we define the unitary operator $W_u(\bmm)=O_{\rm obs}^\dagger U_{\rm solver}(\bmm)$. Its expectation value with respect to the all-zero initial state satisfies $\bra{0}W_u(\bmm)\ket{0} = \bra{0}O_{\rm obs}^\dagger U_{\rm solver}(\bmm)\ket{0} = \braket{\psi_{\rm obs}|\psi_T(\bmm)}$. The real part of $\bra{0}W_u(\bmm)\ket{0}$ can be estimated using the Hadamard test \cite{nielsen2010quantum}. Given an auxiliary qubit initialized in $\ket{0}_H$, we apply the controlled operation $c\text{-}W_u(\bmm)$, where $W_u(\bmm)$ is applied to the remaining registers conditioned on the auxiliary qubit being in $\ket{1}_H$. The circuit follows:
\begin{equation*} 
\begin{aligned}
    \ket{0}_H\ket{0}
    &\xrightarrow{H\otimes I}
    \frac{\ket{0}_H+\ket{1}_H}{\sqrt 2}\ket{0}
    \xrightarrow{c-W_u(\bmm)}
    \frac{1}{\sqrt 2}
    \big( \ket{0}_H\ket{0} +\ket{1}_H W_u(\bmm)\ket{0} \big) \\
    &\xrightarrow{H \otimes I}
    \frac{1}{2}\ket{0}_H
    \big( \ket{0} + W_u(\bmm)\ket{0} \big) +
    \frac{1}{2}\ket{1}_H
    \big( \ket{0} - W_u(\bmm)\ket{0} \big).
\end{aligned}
\end{equation*}
Consequently, the probability of measuring the auxiliary qubit to be in state $\ket{0}_H$ is
\begin{equation} \label{eq:u_hada}
\begin{aligned}
    p^u_H(\bmm)
    &= \left\| \frac{1}{2}\left( \ket{0} + W_u(\bmm)\ket{0} \right)
    \right\|^2 \\
    &=  \frac{1}{4}
    \left( \braket{0|0} + \bra{0}W_u(\bmm)\ket{0} + \bra{0}W_u^\dagger(\bmm)\ket{0} + \bra{0}W_u^\dagger(\bmm)W_u(\bmm)\ket{0} \right) \\
    &= \frac{1}{2} \left( 1 + \mathrm{Re} \bra{0}W_u(\bmm)\ket{0}
    \right) = \frac{ 1+ \mathrm{Re} \braket{\psi_{\rm obs}|\psi_T(\bmm)} }{2}.
\end{aligned}
\end{equation}
Therefore, the inner product can be estimated as
\[
    \mathrm{Re} \braket{\psi_{\rm obs}|\psi_T(\bmm)} 
    = \frac{ \mathrm{Re} \braket{ \bu_{\rm obs},
    \widetilde{\bu}_T(\bmm)}}{ \lambda(\bmm) \|\bu_0\| \|\bu_{\rm obs}\|}
    = 2p^u_H(\bmm)-1  .
\]
For a constant confidence level, this overlap can be estimated to a prescribed additive accuracy $\epsilon$ using $\mathcal{O}(1/\epsilon^2)$ repetitions of the Hadamard test.

We additionally measure the projected success probability,
\begin{equation}
    p_{\rm succ}(\bmm) := \bra{\psi_T(\bmm)} P_u \ket{\psi_T(\bmm)}
    = \left\| P_uU_{\mathrm{solver}}(\bmm)\ket{0} \right\|^2
    = \frac{ \left\|\widetilde{\bu}_T(\bmm)\right\|^2}{ \lambda(\bmm)^2\left\|\bu_0\right\|^2 }.
\label{eq:project_succ}
\end{equation}
Experimentally, $p_{\rm succ}(\bmm)$ is obtained by executing the forward-solver circuit and recording the frequency with which the auxiliary registers are measured in $\ket{0}^{\otimes a}$. The squared norm can therefore be reconstructed as $ \|\widetilde{\bu}_T(\bmm)\|^2 = \lambda(\bmm)^2\|\bu_0\|^2p_{\rm succ}(\bmm).$ And the real part of the inner product is obtained from the Hadamard test statistics:
\[
    \mathrm{Re} \braket{\bu_{\rm obs}, \widetilde{\bu}_T(\bmm) }
    =\lambda(\bmm) \|\bu_0\| \|\bu_{\rm obs} \|\big(2p^u_H(\bmm)-1\big).
\]

After eliminating the normalization factor by combining these two quantities, the infinite-shot physical loss function is
\begin{equation} \label{eq:loss_phy}
    \widetilde{\mathcal L}_{\rm phys}(\bmm) :=
    \lambda(\bmm)^2\|\bu_0\|^2p_{\rm succ}(\bmm) + \|\bu_{\rm obs}\|^2 - 2\lambda(\bmm) \|\bu_0\| \|\bu_{\rm obs}\| \big( 2p^u_H(\bmm)-1 \big).
\end{equation}

Thus, the conventional physical loss can be reconstructed from the projected quantum measurements using the known quantities $\lambda(\bmm)$, $\|\bu_0\|$ and $\|\bu_{\rm obs}\|$. To achieve
$|\widetilde{\mathcal L}_{\rm phys}(\bmm)-\widehat{\mathcal{L}}_{\rm phys}(\bmm)|\leq\epsilon$, the leading total sampling complexity scales as
\[
    N_{\rm total} = \mathcal{O}\!\left( \frac{ \lambda(\bmm)^4\|\bu_0\|^4
    + \lambda(\bmm)^2 \|\bu_0\|^2 \|\bu_{\rm obs}\|^2} {\epsilon^2} \right).
\]

In other words, the quantum estimation requires recovering the physical scaling of the terminal solution, and the associated norm factors amplify finite-shot errors. This motivates the introduction of normalized losses that are more naturally compatible with quantum-state outputs. By characterizing the discrepancy through normalized quantum-state overlaps, these losses can be estimated without explicitly recovering the physical scaling of the terminal solution and may therefore require fewer quantum measurements.

\subsubsection{Normalized quantum-state loss in the physical system}
In order to avoid the explicit recovery of $\lambda(\bmm)\|\bu_0\|$ or $\|\widetilde{\bu}_T(\bmm)\|$, we introduce the normalized quantum-state loss in the physical subspace, which utilize the measurement quantities $p^u_H(\bmm)$ and $p_{\rm succ}(\bmm)$ without reconstructing the physical norm of $\widetilde{\bu}_T(\bmm)$. Assuming that $\bu_{\rm obs}\neq \bm{0}$ and $\widetilde{\bu}_T(\bmm)\neq \bm{0}$, the exact inner product of two normalized vectors can be estimated from the ratio of the two experimentally accessible quantities as:
\[
    \frac{\text{Re}\langle \bu_{\rm obs}, \widetilde{\bu}_T(\bmm) \rangle}{\|\bu_{\rm obs}\|\|\widetilde{\bu}_T(\bmm)\|}  = {\frac{\text{Re}\langle \bu_{\rm obs}, \widetilde{\bu}_T(\bmm) \rangle}{\lambda(\bmm)\|\bu_{\rm obs}\|\|\bu_0\|}} \big/ {\frac{\|\widetilde{\bu}_T(\bmm)\|}{\lambda(\bmm)\|\bu_0\|}}=  \frac{2p^u_H(\bmm) - 1}{\sqrt{p_{\rm succ}(\bmm)}}.
\]

For the quantum estimator, we further require $p_{\rm succ}(\bmm)>0$. This yields the normalized loss defined on the physical variable:
\begin{equation} \label{eq:loss_norm_u}
     \widetilde{\mathcal L}^u_{\rm norm}(\bmm) := 2-2 \frac{\mathrm{Re}\langle \bu_{\rm obs}, \widetilde{\bu}_T(\bmm) \rangle}{\|\bu_{\rm obs}\|\,\|\widetilde{\bu}_T(\bmm)\|} = 
    2 - \frac{4p^u_H(\bmm) - 2}{\sqrt{p_{\rm succ}(\bmm)}}.
\end{equation}

Its finite-shot sampling complexity for a prescribed additive accuracy $|\widetilde{\mathcal L}^u_{\rm norm}-\widehat{\mathcal L}^u_{\rm norm}|\leq\epsilon$ scales as
$
    N_{\rm total} = \mathcal{O}\left(\frac{1}{p_{\rm succ}(\bmm) \epsilon^2}\right).
$

The normalized loss compares only the direction, or normalized profile, of the terminal solution and the observation. It therefore discards their overall norm information: two vectors that differ only by a positive multiplicative factor yield zero normalized loss. This loss is consequently appropriate when the inversion is primarily determined by the normalized spatial profile, or when the reduced measurement dependence is more important than recovering the absolute solution scale.

\subsubsection{Normalized quantum-state loss in the lifted Carleman space}

For a nonlinear problem by Carleman linearization, the normalized loss can also be defined and evaluated directly in the truncated lifted space. Let $\bY_{N,T}(\bmm)=(\bu_T(\bmm),\bu_T(\bmm)^{\otimes 2},\ldots,\bu_T(\bmm)^{\otimes N})^T\in\mathbb{R}^{D_N}$, where $D_N=\sum_{j=1}^{N}n_x^j$ denotes the $N$ order lift of the exact terminal solution, and let $\widetilde{\bY}_{N,T}(\bmm)$ denote the corresponding unnormalized approximation produced by the quantum PDE solver. Carleman-based quantum algorithms encode the truncated lifted system as a linear evolution and prepare a coherent state containing the lifted terminal solution in a designated successful subspace \cite{liu2021efficient}.

Following the general output representation in \eqref{eq:output_state}, we assume that the quantum solver for the truncated Carleman system produces
\begin{equation*}
    \ket{\psi_T^Y(\bmm)}:=U_{\rm solver}^Y(\bmm)\ket{0}=\frac{\|\widetilde{\bY}_{N,T}(\bmm)\|}{\lambda_Y(\bmm)\|\bY_{N,0}\|}\ket{0}^{\otimes a_Y}\ket{\widetilde{\bY}_{N,T}(\bmm)}+\ket{\Phi_T^{Y,\perp}(\bmm)},
\end{equation*}
where $\bY_{N,0}=(\bu_0,\bu_0^{\otimes 2},\ldots,\bu_0^{\otimes N})^T\in\mathbb{R}^{D_N}$, $a_Y$ is the number of auxiliary qubits identifying the successful lifted-solution branch, and $\lambda_Y(\bmm)$ denotes the scaling factor in Carleman system. The normalized amplitude encoding of the approximate lifted solution $\widetilde{\bY}_{N,T}(\bmm)=\big(\widetilde{Y}_{N,T,1}(\bmm), \dots,\widetilde{Y}_{N,T,D_N}(\bmm)\big)^T$ follows
\[
    \ket{\widetilde{\bY}_{N,T}(\bmm)}=\frac{1}{\|\widetilde{\bY}_{N,T}(\bmm)\|}\sum_{\ell=0}^{D_N-1}\widetilde{Y}_{N,T,\ell+1}(\bmm)\ket{\ell}.
\]

The loss-evaluation procedure developed in the preceding subsections remains applicable by redefining the observation state in the lifted space. Define the lifted observation vector as $\bY_{\rm obs}=(\bu_{\rm obs},\bu_{\rm obs}^{\otimes 2},\ldots,\bu_{\rm obs}^{\otimes N})^T\in\mathbb{R}^{D_N}$, and its corresponding normalized amplitude-encoded state in the full space is given by
\begin{equation} \label{eq:obs_state}
    \ket{\psi_{\rm obs}^Y} :=
    \ket{0}^{\otimes a_Y}\otimes \left( \frac{1}{\|\bY_{\rm obs}\|} \sum_{\ell=0}^{D_N-1}Y_{{\rm obs},\ell+1}\ket{\ell} \right) =O_{\rm obs}^Y\ket{0},
\end{equation}
where $O_{\rm obs}^Y$ denotes the associated state-preparation oracle.

Define $W_Y(\bmm)=(O_{\rm obs}^Y)^\dagger U_{\rm solver}^Y(\bmm)$. Applying the same Hadamard test introduced above gives
\begin{equation} \label{eq:y_hada}
    \mathrm{Re}\braket{\psi_{\rm obs}^Y|\psi_T^Y(\bmm)}=\frac{\mathrm{Re}\langle\bY_{\rm obs},\widetilde{\bY}_{N,T}(\bmm)\rangle}{\lambda_Y(\bmm)\|\bY_{N,0}\|\|\bY_{\rm obs}\|}
    = 2p_H^Y(\bmm)-1.
\end{equation}

Let $P_Y=(\ket{0}\bra{0})^{\otimes a_Y}\otimes I$ denote the projector onto the successful lifted-solution subspace. The corresponding success probability is
\begin{equation} \label{eq:carleman_succ}
    p^Y_{\rm succ}(\bmm):=\bra{\psi_T^Y(\bmm)}P_Y\ket{\psi_T^Y(\bmm)}=\frac{\|\widetilde{\bY}_{N,T}(\bmm)\|^2}{\lambda_Y(\bmm)^2\|\bY_{N,0}\|^2}.
\end{equation}

Consequently, taking the ratio of these two experimentally accessible quantities eliminates both the solver-dependent normalization factor and the norm of the initial lifted vector:
\[
    \frac{\mathrm{Re}\langle\bY_{\rm obs},\widetilde{\bY}_{N,T}(\bmm)\rangle}{\|\bY_{\rm obs}\|\|\widetilde{\bY}_{N,T}(\bmm)\|}
    = \frac{2p_H^Y(\bmm)-1}{\sqrt{p^Y_{\rm succ}(\bmm)}}
    =\frac{\mathrm{Re}\langle\bY_{\rm obs},\widetilde{\bY}_{N,T}(\bmm)\rangle}{\lambda_Y(\bmm)\|\bY_{N,0}\|\|\bY_{\rm obs}\|} \big/ \frac{\|\widetilde{\bY}_{N,T}(\bmm)\|}{\lambda_Y(\bmm)\|\bY_{N,0}\|} .
\]

The infinite-shot normalized lifted loss $ \widetilde{\mathcal{L}}_{\rm norm}^{Y}(\bmm)$ in the truncated Carleman space is therefore given by
\begin{equation} \label{eq:loss_norm_Y}
    \widetilde{\mathcal{L}}_{\rm norm}^{Y}(\bmm):=2-2\frac{\mathrm{Re}\langle\bY_{\rm obs},\widetilde{\bY}_{N,T}(\bmm)\rangle}{\|\bY_{\rm obs}\|\|\widetilde{\bY}_{N,T}(\bmm)\|}=2-\frac{4p_H^Y(\bmm)-2}{\sqrt{p^Y_{\rm succ}(\bmm)}}.
\end{equation}
The finite-shot estimator $\widehat{\mathcal{L}}_{\rm norm}^{Y}(\bmm)$ is obtained by replacing $p_H^Y(\bmm)$ and $p^Y_{\rm succ}(\bmm)$ with their empirical frequencies. For a fixed confidence level, achieving an additive accuracy $|\widehat{\mathcal{L}}_{\rm norm}^{Y}(\bmm)-\widetilde{\mathcal{L}}_{\rm norm}^{Y}(\bmm)|\leq\epsilon$ requires a total number of circuit repetitions scaling as $N_{\rm total}=\mathcal{O}(1/(p^Y_{\rm succ}(\bmm)\epsilon^2))$.

The same coherent Carleman output also supports loss evaluation in the original physical space. Let $\Pi_1:\mathbb{R}^{D_N}\rightarrow\mathbb{R}^{n_x}$ denote the extraction operator for the first-order Carleman component, so that $\bu_T(\bmm)=\Pi_1\bY_{N,T}(\bmm)$ and $\widetilde{\bu}_T(\bmm)=\Pi_1\widetilde{\bY}_{N,T}(\bmm)$. In the quantum implementation, we introduce a flag register $F$ and a reversible oracle $O_u$ acting on the lifted-system register and the flag register, while leaving the solver ancillas unchanged, such that
\begin{equation*}
    O_u\ket{\ell}\ket{0}_F=
\begin{cases}
    \ket{\ell}\ket{1}_F, & \ell=0,\ldots,{n_x}-1,\\
    \ket{\ell}\ket{0}_F, & \mathrm{otherwise}.
\end{cases}
\end{equation*}
Thus, $O_u$ coherently flags the first-order Carleman component without modifying its amplitudes. The projected success probability is obtained by measuring the flag register in $\ket{1}_F$ together with all solver ancillas in the all-zero state, and satisfies
\begin{equation*}
\begin{aligned}
    p_{\rm succ}(\bmm) := \left\| \big[ \big(\bra{0}^{\otimes a_Y}\otimes I\big)\otimes\bra{1}_F \big] O_u 
    \left( \ket{\psi_T^Y(\bmm)}\otimes\ket{0}_F \right) \right\|^2  = \frac{\|\widetilde{\bu}_T(\bmm)\|^2} {\lambda_Y(\bmm)^2\|\bY_{N,0}\|^2}.
\end{aligned}
\end{equation*}

To estimate the corresponding physical-space overlap, define the embedded observation vector $\bY_{\rm obs}^u=(\bu_{\rm obs},0,\ldots,0)^T\in\mathbb{R}^{D_N}$ and prepare its normalized observation state in the successful physical-solution subspace. Applying the Hadamard test construction from the preceding subsections gives
\[
    2p^u_H(\bmm)-1=\frac{\mathrm{Re}\langle\bu_{\rm obs},\widetilde{\bu}_T(\bmm)\rangle}{\lambda_Y(\bmm)\|\bY_{N,0}\|\|\bu_{\rm obs}\|}.
\]
Consequently, the normalized loss in the physical system can be evaluated from the lifted Carleman output as
\[
    \widetilde{\mathcal{L}}_{\rm norm}^{u}(\bmm)= 2- 2\frac{\mathrm{Re}\langle\bu_{\rm obs},\widetilde{\bu}_T(\bmm)\rangle}{\|\bu_{\rm obs}\|\|\widetilde{\bu}_T(\bmm)\|}
    =2-\frac{4p^u_H(\bmm)-2}{\sqrt{p_{\rm succ}(\bmm)}},
\]
whereas the corresponding approximation of the physical loss reads
\[
    \widetilde{\mathcal{L}}_{\rm phys}(\bmm)=\lambda_Y(\bmm)^2\|\bY_{N,0}\|^2p_{\rm succ}(\bmm)+\|\bu_{\rm obs}\|^2-2\lambda_Y(\bmm)\|\bY_{N,0}\|\|\bu_{\rm obs}\|\bigl(2p^u_H(\bmm)-1\bigr).
\]

Finally, the success probabilities associated with the full lifted state and the projected physical component satisfy
\[
    p^Y_{\rm succ}(\bmm)=p_{\rm succ}(\bmm)+\frac{\|(I-\Pi_1^\dagger\Pi_1)\widetilde{\bY}_{N,T}(\bmm)\|^2}{\lambda_Y(\bmm)^2\|\bY_{N,0}\|^2}\geq p_{\rm succ}(\bmm).
\]
Therefore, the normalized lifted loss may require fewer circuit repetitions than the projected normalized loss. However, it compares the complete truncated Carleman hierarchy and hence incorporates all tensor-product components $\bu^{\otimes j}$ up to order $N$, rather than measuring the discrepancy only in the original physical variable $\bu$. A detailed comparison of the normalized lifted loss, the projected normalized loss, and the physical loss reconstructed from quantum measurements is provided in \cref{tab:loss_function}.

\begin{table}[ht]
\centering
\caption{Comparison of the normalized quantum-state losses $\mathcal{L}^{Y}_{\rm norm}(\bmm)$ and $\mathcal{L}^{u}_{\rm norm}(\bmm)$ with the physical loss $\mathcal{L}_{\rm phys}(\bmm)$ reconstructed using a quantum algorithm. The Hadamard test probability $p_H^u(\bmm)$ and projected success probability $p_{\rm succ}(\bmm)$ are defined in \cref{eq:project_succ,eq:u_hada}, respectively, while their counterparts in the Carleman system, $p_H^Y(\bmm)$ and $p_{\rm succ}^Y(\bmm)$ are defined in \cref{eq:carleman_succ,eq:y_hada}.}
\label{tab:loss_function}
\resizebox{\linewidth}{!}{\begin{tabular}{cccc}
    \toprule 
    & Carleman $\mathcal{L}^{Y}_{\rm norm}(\bmm)$ & Normalized $\mathcal{L}^{u}_{\rm norm}(\bmm)$ & Physical $\mathcal{L}_{\rm phys}(\bmm)$ \\
    \midrule 
    Loss expression   
    & $\left\|\frac{\bY_T(\bmm)}{\|\bY_T(\bmm)\|}-\frac{\bY_{\rm obs}}{\|\bY_{\rm obs}\|}\right\|^2$  
    & $\left\|\frac{\bu_T(\bmm)}{\|\bu_T(\bmm)\|}-\frac{\bu_{\rm obs}}{\|\bu_{\rm obs}\|}\right\|^2$ 
    & $\|\bu_T(\bmm)-\bu_{\rm obs}\|^2$ \\ [1.5ex]

    Measurements & $p^Y_H(\bmm),\, p^Y_{\rm succ}(\bmm)$  & $p^u_H(\bmm),\, p_{\rm succ}(\bmm)$  & $p^u_H(\bmm),\, p_{\rm succ}(\bmm)$ \\ [1.2ex]

    \begin{tabular}[c]{@{}c@{}} Cost requirement \\ for $|\widetilde{\mathcal L}-\widehat{\mathcal{L}}|\leq\epsilon$ 
    \end{tabular}  
    & $\displaystyle \mathcal{O}\!\left(\frac{1}{\epsilon^2 p^Y_{\rm succ}(\bmm)}\right)$  
    & $\displaystyle \mathcal{O}\!\left(\frac{1}{\epsilon^2 p_{\rm succ}(\bmm)}\right)$  
    & $ \mathcal{O}\!\left( \frac{ \lambda(\bmm)^4\|\bu_0\|^4
    + \lambda(\bmm)^2 \|\bu_0\|^2 \|\bu_{\rm obs}\|^2} {\epsilon^2} \right)$ \\
    \bottomrule 
\end{tabular}}
\end{table}

Note that reconstructing $\mathcal{L}_{\rm phys}(\bmm)$ additionally requires the solver-dependent scaling factor $\lambda(\bmm)$ to be known classically. This condition is satisfied by terminal-state solvers based on block encodings and linear combinations of unitaries, which have explicit sub-normalization factors \cite{an2023linear,gutierrez2026quantum,berry2017quantum}. For QLSA-based history-state solvers whose normalization depends on an unknown history-state norm, the normalized losses remain directly accessible, whereas reconstruction of the physical loss requires an additional norm-estimation procedure \cite{berry2017quantum,liu2021efficient,MoralesPiraSchleich2025}.

\section{Error and complexity analysis} \label{sec:error_complex}
For a fixed candidate parameter $\bmm$, the error in the loss evaluation under quantum measurements consists of two components: the deterministic approximation error introduced by $U_{\rm solver}(\bmm)$ and the statistical error arising from finite quantum measurements. We first analyze how the deterministic error propagates to the loss function, and then quantify the measurement complexity required to control the statistical error in a single quantum loss evaluation.

\subsection{Approximation error of forward PDE solver}
Based on the notations defined in \cref{sec:quantum}, we analyze how the terminal-state approximation error $\epsilon_{\rm app}$ propagates to the normalized loss function and the physical loss function. A separate error analysis for the lifted Carleman normalized loss is omitted, since it is defined in the truncated Carleman space and serves only as an auxiliary objective. We first give the error analysis for normalized loss $\widetilde{\mathcal L}_{\rm norm}^{u}(\bmm)$.
\begin{lemma}[Error bound for the normalized loss] \label{lem:loss_norm_error}
Consider a nonzero observation vector $\bu_{\rm obs}$, the nonzero exact terminal solution $\bu_T(\bmm)$, and a nonzero approximation $\widetilde{\bu}_T(\bmm)$ satisfying $\|\bu_T(\bmm)-\widetilde{\bu}_T(\bmm)\|\leq\epsilon_{\rm app}$. Assuming that
\[
\|\bu_T(\bmm)-\bu_{\rm obs}\|\leq\delta\|\bu_{\rm obs}\|,
\]
where $0<\delta<1$. For the exact normalized loss $\mathcal L_{\rm norm}^{u}(\bmm)$ and its infinite-shot quantum approximation $\widetilde{\mathcal L}_{\rm norm}^{u}(\bmm)$ defined in \cref{eq:loss_norm_u}, the deterministic loss error satisfies
\[
    \left|\mathcal L_{\rm norm}^{u}(\bmm)-\widetilde{\mathcal L}_{\rm norm}^{u}(\bmm)\right|
    \leq \frac{4\epsilon_{\rm app}}{(1-\delta)\|\bu_{\rm obs}\|}.
\]
\end{lemma}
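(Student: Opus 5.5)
The plan is to reduce the loss difference to a difference of normalized vectors, and then bound that by the elementary normalization inequality. Write $\hat{\bu}_{\rm obs}=\bu_{\rm obs}/\|\bu_{\rm obs}\|$, $\hat{\bu}=\bu_T(\bmm)/\|\bu_T(\bmm)\|$ and $\hat{\widetilde{\bu}}=\widetilde{\bu}_T(\bmm)/\|\widetilde{\bu}_T(\bmm)\|$. By \cref{eq:loss_norm_u} and its exact counterpart,
\[
\mathcal L_{\rm norm}^{u}(\bmm)-\widetilde{\mathcal L}_{\rm norm}^{u}(\bmm) = 2\,\mathrm{Re}\langle \hat{\bu}_{\rm obs}, \hat{\widetilde{\bu}}-\hat{\bu}\rangle .
\]
Cauchy--Schwarz together with $\|\hat{\bu}_{\rm obs}\|=1$ then gives
\[
\bigl|\mathcal L_{\rm norm}^{u}-\widetilde{\mathcal L}_{\rm norm}^{u}\bigr|\le 2\|\hat{\bu}-\hat{\widetilde{\bu}}\|.
\]

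Next I will use the standard estimate $\|a/\|a\|-b/\|b\|\|\le 2\|a-b\|/\|a\|$ for nonzero $a,b$. To prove it, write
\[
\frac{a}{\|a\|}-\frac{b}{\|b\|}=\frac{a-b}{\|a\|}+b\Bigl(\frac{1}{\|a\|}-\frac{1}{\|b\|}\Bigr).
\]
The second term has norm $\bigl|\|b\|-\|a\|\bigr|/\|a\|\le\|a-b\|/\|a\|$ by the reverse triangle inequality. Applying this with $a=\bu_T(\bmm)$ and $b=\widetilde{\bu}_T(\bmm)$ gives
\[
\|\hat{\bu}-\hat{\widetilde{\bu}}\|\le 2\epsilon_{\rm app}/\|\bu_T(\bmm)\|.
\]

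The remaining step, which is the only place the hypothesis on $\delta$ enters, is a lower bound on $\|\bu_T(\bmm)\|$. By the triangle inequality, $\|\bu_T(\bmm)\|\ge\|\bu_{\rm obs}\|-\|\bu_T(\bmm)-\bu_{\rm obs}\|\ge(1-\delta)\|\bu_{\rm obs}\|>0$. Combining the three estimates yields
\[
\bigl|\mathcal L_{\rm norm}^{u}-\widetilde{\mathcal L}_{\rm norm}^{u}\bigr|\le \frac{4\epsilon_{\rm app}}{(1-\delta)\|\bu_{\rm obs}\|}.
\]
Every step is elementary. The point needing care is to normalize by the exact solution $\bu_T(\bmm)$ rather than the approximation $\widetilde{\bu}_T(\bmm)$ in the normalization inequality, because only $\|\bu_T(\bmm)\|$ is controlled by the assumption. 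This choice is what avoids any smallness condition on $\epsilon_{\rm app}$.
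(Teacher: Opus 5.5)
Your proposal is correct and follows essentially the same route as the paper's proof. Both lower-bound $\|\bu_T(\bmm)\|\geq(1-\delta)\|\bu_{\rm obs}\|$ by the reverse triangle inequality, bound the normalized-vector difference by $\|\bu_T(\bmm)-\widetilde{\bu}_T(\bmm)\|/\|\bu_T(\bmm)\|+\bigl|\|\widetilde{\bu}_T(\bmm)\|-\|\bu_T(\bmm)\|\bigr|/\|\bu_T(\bmm)\|$, and then use Cauchy--Schwarz to pass from the normalized vectors to the loss, with only the order of the steps differing.
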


\begin{proof}[Proof of \cref{lem:loss_norm_error}]
The closeness assumption and the reverse triangle inequality imply
\[
    \|\bu_T(\bmm)\|\geq\|\bu_{\rm obs}\|-\|\bu_T(\bmm)-\bu_{\rm obs}\|
    \geq(1-\delta)\|\bu_{\rm obs}\|>0.
\]
Thus, $\bu_T(\bmm)\neq0$. Since $\widetilde{\bu}_T(\bmm)\neq0$ by assumption, the normalized terminal vectors are well defined. The difference between the normalized terminal states satisfies
\begin{align*}
    \left\|\frac{\bu_T(\bmm)}{\|\bu_T(\bmm)\|}-\frac{\widetilde{\bu}_T(\bmm)}{\|\widetilde{\bu}_T(\bmm)\|}\right\|
    &\leq\frac{\|\bu_T(\bmm)-\widetilde{\bu}_T(\bmm)\|}{\|\bu_T(\bmm)\|}
    +\frac{\big|\|\widetilde{\bu}_T(\bmm)\|-\|\bu_T(\bmm)\|\big|}{\|\bu_T(\bmm)\|} \\
    &\leq\frac{2\epsilon_{\rm app}}{(1-\delta)\|\bu_{\rm obs}\|}.
\end{align*}
    Using the definitions of $\mathcal{L}_{\rm norm}^u(\bmm)$ and $\widetilde{\mathcal{L}}_{\rm norm}^u(\bmm)$ together with the Cauchy--Schwarz inequality, we obtain
\begin{align*}
    \left|\mathcal{L}_{\rm norm}^u(\bmm)-\widetilde{\mathcal{L}}_{\rm norm}^u(\bmm)\right|
    \leq2\left\|\frac{\bu_T(\bmm)}{\|\bu_T(\bmm)\|}-\frac{\widetilde{\bu}_T(\bmm)}{\|\widetilde{\bu}_T(\bmm)\|}\right\| 
    \leq\frac{4\epsilon_{\rm app}}{(1-\delta)\|\bu_{\rm obs}\|}.
\end{align*}
\end{proof}

The corresponding error bound for the physical loss is given as follows.
\begin{lemma}[Error bound for the physical loss] \label{lem:loss_phys_error}
Consider the exact terminal solution $\bu_T(\bmm)$ and its approximation $\widetilde{\bu}_T(\bmm)$ satisfying $\|\bu_T(\bmm)-\widetilde{\bu}_T(\bmm)\|\leq\epsilon_{\rm app}$. For the exact physical loss $\mathcal L_{\rm phys}(\bmm)$ and its infinite-shot quantum approximation $\widetilde{\mathcal L}_{\rm phys}(\bmm)$ defined in \cref{eq:loss_phy}, the deterministic loss error satisfies
\[
    \left|\mathcal L_{\rm phys}(\bmm)-\widetilde{\mathcal L}_{\rm phys}(\bmm)\right|
    \leq 2\|\bu_T(\bmm)-\bu_{\rm obs}\|\epsilon_{\rm app}+\epsilon_{\rm app}^2.
\]
If the closeness condition $\|\bu_T(\bmm)-\bu_{\rm obs}\|\leq\delta\|\bu_{\rm obs}\|$ with $0<\delta<1$ in \cref{lem:loss_norm_error} further holds, then the deterministic loss error satisfies
\begin{equation} \label{phys_loss_error}
    \left|\mathcal L_{\rm phys}(\bmm)-\widetilde{\mathcal L}_{\rm phys}(\bmm)\right|
    \leq 2\delta\|\bu_{\rm obs}\|\epsilon_{\rm app}+\epsilon_{\rm app}^2.
\end{equation}
\end{lemma}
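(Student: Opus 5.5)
The plan is first to rewrite the infinite-shot physical loss as an exact squared norm. By the identity $\|\widetilde{\bu}_T(\bmm)\|^2=\lambda(\bmm)^2\|\bu_0\|^2 p_{\rm succ}(\bmm)$ from \cref{eq:project_succ}, and by the Hadamard-test relation $\mathrm{Re}\langle\bu_{\rm obs},\widetilde{\bu}_T(\bmm)\rangle=\lambda(\bmm)\|\bu_0\|\|\bu_{\rm obs}\|(2p^u_H(\bmm)-1)$, the definition \cref{eq:loss_phy} becomes
\[
\widetilde{\mathcal L}_{\rm phys}(\bmm)=\|\widetilde{\bu}_T(\bmm)\|^2+\|\bu_{\rm obs}\|^2-2\mathrm{Re}\langle\bu_{\rm obs},\widetilde{\bu}_T(\bmm)\rangle=\|\widetilde{\bu}_T(\bmm)-\bu_{\rm obs}\|^2 .
\]
This reduces the lemma to comparing two squared distances to $\bu_{\rm obs}$. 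The step needs care only in that the infinite-shot quantities are exact, so no sampling term enters.

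Next, set $\bm{a}=\bu_T(\bmm)-\bu_{\rm obs}$ and $\bm{e}=\widetilde{\bu}_T(\bmm)-\bu_T(\bmm)$, so that $\|\bm{e}\|\le\epsilon_{\rm app}$. Expanding the square gives
\[
\|\bm{a}+\bm{e}\|^2-\|\bm{a}\|^2=2\mathrm{Re}\langle\bm{a},\bm{e}\rangle+\|\bm{e}\|^2 .
\]
Cauchy--Schwarz then yields $|\mathcal L_{\rm phys}(\bmm)-\widetilde{\mathcal L}_{\rm phys}(\bmm)|\le 2\|\bu_T(\bmm)-\bu_{\rm obs}\|\epsilon_{\rm app}+\epsilon_{\rm app}^2$, which is the first claim.

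For \cref{phys_loss_error}, I would substitute the closeness hypothesis $\|\bu_T(\bmm)-\bu_{\rm obs}\|\le\delta\|\bu_{\rm obs}\|$ directly into the first bound. None of the steps is a real obstacle. The only point needing attention is the first one: checking that the quantum reconstruction matches the definition exactly, which holds because the same scaling factor $\lambda(\bmm)\|\bu_0\|$ appears consistently in both measured probabilities.
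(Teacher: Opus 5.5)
Your proof is correct and is essentially the paper's argument. Both reduce the lemma to comparing $\|\bu_T(\bmm)-\bu_{\rm obs}\|^2$ with $\|\widetilde{\bu}_T(\bmm)-\bu_{\rm obs}\|^2$; the paper bounds this via the reverse triangle inequality and $a^2-b^2=(a-b)(a+b)$, while you expand the square and apply Cauchy--Schwarz, reaching the identical bound, and your explicit check that \cref{eq:loss_phy} collapses to $\|\widetilde{\bu}_T(\bmm)-\bu_{\rm obs}\|^2$ is a useful addition that the paper uses without comment.
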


\begin{proof}[Proof of \cref{lem:loss_phys_error}]
By the reverse triangle inequality,
\[
\big|\|\bu_T(\bmm)-\bu_{\mathrm{obs}}\|-\|\widetilde{\bu}_T(\bmm)-\bu_{\mathrm{obs}}\|\big|\leq\|\bu_T(\bmm)-\widetilde{\bu}_T(\bmm)\|\leq\epsilon_{\mathrm{app}}.
\]
The triangle inequality also gives
\[
\|\widetilde{\bu}_T(\bmm)-\bu_{\mathrm{obs}}\|\leq\|\bu_T(\bmm)-\bu_{\mathrm{obs}}\|+\epsilon_{\mathrm{app}}.
\]
Consequently,
\begin{align*}
\left|\mathcal{L}_{\mathrm{phys}}(\bmm)-\widetilde{\mathcal{L}}_{\mathrm{phys}}(\bmm)\right|
&=\left|\|\bu_T(\bmm)-\bu_{\mathrm{obs}}\|^2-\|\widetilde{\bu}_T(\bmm)-\bu_{\mathrm{obs}}\|^2\right| \\
&\leq\epsilon_{\mathrm{app}}\left(\|\bu_T(\bmm)-\bu_{\mathrm{obs}}\|+\|\widetilde{\bu}_T(\bmm)-\bu_{\mathrm{obs}}\|\right) \\
&\leq2\|\bu_T(\bmm)-\bu_{\mathrm{obs}}\|\epsilon_{\mathrm{app}}+\epsilon_{\mathrm{app}}^2.
\end{align*}
Under the assumption of $\|\bu_T(\bmm)-\bu_{\mathrm{obs}}\|\leq\delta\|\bu_{\mathrm{obs}}\|$, \Cref{phys_loss_error} follows.
\end{proof}

\subsection{Complexity analysis for loss evaluation}
From the preceding analysis, \cref{tab:loss_function} summarizes the leading sampling complexity required to achieve a prescribed additive loss accuracy. For one BayesOpt loss evaluation, we further characterize this sampling complexity from a statistical perspective by providing a high-probability bound for the finite-shot loss estimation.

\begin{lemma}[Statistical error for normalized loss estimation]
\label{lem:stat_error}
Let $\widetilde{\mathcal L}^u_{\rm norm}(\bmm)$ be the infinite-shot normalized loss and $\widehat{\mathcal L}^u_{\rm norm}(\bmm)$ be its finite-shot estimator obtained from independent circuit repetitions, respectively. Suppose that Hadamard test probability $p_H^u(\bmm) > 0$ and projected success probability  $p_{\rm succ}(\bmm)>0$, then for any prescribed failure probability $\rho\in(0,1)$ and statistical accuracy $0<\epsilon_{\rm stat}<1$, the estimator satisfies
\[
    \mathbb P\left(\left|\widehat{\mathcal L}^u_{\rm norm}(\bmm)-\widetilde{\mathcal L}^u_{\rm norm}(\bmm)\right|\leq\epsilon_{\rm stat}\right)\geq 1-\rho.
\]

Each Hadamard test circuit queries both $O_{\rm prep}$ and $O_{\rm obs}$ once, while each circuit for estimating $p_{\rm succ}(\bmm)$ queries $O_{\rm prep}$ once. The numbers of queries to $O_{\rm prep}$ and $O_{\rm obs}$ required to achieve the prescribed statistical accuracy both satisfy
\[
    \mathcal O\left(\frac{\lambda(\bmm)^2\left\|\bu_0\right\|^2}{ \left\|\widetilde{\bu}_T(\bmm)\right\|^2\epsilon_{\rm stat}^2}\log\frac{1}{\rho}\right),
\]
where the scaling factor $\lambda(\bmm)>0$ depends on the quantum PDE solver.
\end{lemma}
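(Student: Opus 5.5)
The proof combines a Hoeffding bound for each of the two Bernoulli estimates with a perturbation bound for the nonlinear map $g(x,y)=2-(4x-2)/\sqrt{y}$, evaluated at $(p_H^u(\bmm),p_{\rm succ}(\bmm))$. Write $q:=2p^u_H(\bmm)-1$ and $s:=\sqrt{p_{\rm succ}(\bmm)}$. Their empirical counterparts are $\hat q$, built from $N_H$ Hadamard-test shots, and $\hat s=\sqrt{\hat p_{\rm succ}}$, built from $N_S$ solver shots. Then
\[
\widetilde{\mathcal L}^u_{\rm norm}=2-\frac{2q}{s},\qquad \widehat{\mathcal L}^u_{\rm norm}=2-\frac{2\hat q}{\hat s}.
\]
By \eqref{eq:project_succ}, $s^2=\|\widetilde{\bu}_T(\bmm)\|^2/(\lambda(\bmm)^2\|\bu_0\|^2)$, so the claimed query count is $\mathcal O(\log(1/\rho)/(p_{\rm succ}\epsilon_{\rm stat}^2))$. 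This is consistent with \cref{tab:loss_function}.

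\emph{Step 1: concentration.} Each Hadamard-test outcome and each ancilla-projection outcome is an independent Bernoulli variable. Apply Hoeffding's inequality to each, allotting failure probability $\rho/2$ to each event, and take a union bound. This gives $|\hat p_H-p_H^u|\le\eta_H$ with $N_H=\mathcal O(\eta_H^{-2}\log(1/\rho))$, and $|\hat p_{\rm succ}-p_{\rm succ}|\le\eta_S$ with $N_S=\mathcal O(\eta_S^{-2}\log(1/\rho))$.

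\emph{Step 2: perturbation of the ratio.} Decompose
\[
\frac{\hat q}{\hat s}-\frac{q}{s}=\frac{\hat q-q}{\hat s}+q\,\frac{s-\hat s}{s\hat s}.
\]
Three facts control the terms. First, $|q|\le s$ by Cauchy--Schwarz, since $q$ is the overlap of $\ket{\psi_{\rm obs}}$ with the projected successful component $P_u\ket{\psi_T}$, whose norm is $s$. Second, $|\hat q-q|\le 2\eta_H$. Third, $|\hat s-s|=|\hat p_{\rm succ}-p_{\rm succ}|/(\hat s+s)\le\eta_S/s$.

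Choose $\eta_S\le s^2/2=p_{\rm succ}/2$, so that $\hat s\ge s/\sqrt2$. This also keeps $\hat p_{\rm succ}>0$, so the estimator is well defined. The loss error is then bounded by
\[
\frac{4\sqrt2\,\eta_H}{s}+\frac{2\sqrt2\,\eta_S}{s^2}.
\]

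\emph{Step 3: choice of tolerances.} Take $\eta_H=c\,s\,\epsilon_{\rm stat}$ and $\eta_S=c\,s^2\epsilon_{\rm stat}$ for a small absolute constant $c$. The condition $\epsilon_{\rm stat}<1$ guarantees $\eta_S\le s^2/2$. The loss error is then at most $\epsilon_{\rm stat}$ with probability at least $1-\rho$.

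This gives $N_H=\mathcal O(\log(1/\rho)/(p_{\rm succ}\epsilon_{\rm stat}^2))$. It also gives $N_S=\mathcal O(\log(1/\rho)/(p_{\rm succ}^2\epsilon_{\rm stat}^2))$.

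\emph{Main obstacle.} The $N_S$ bound carries an extra factor $1/p_{\rm succ}$ beyond the claimed rate. Additive Hoeffding is too crude here. I would use a multiplicative (relative) Chernoff bound instead, using that the variance of the Bernoulli variable is at most $p_{\rm succ}$. Requiring the relative accuracy $|\hat p_{\rm succ}-p_{\rm succ}|\le c\,\epsilon_{\rm stat}\,p_{\rm succ}$ costs only $\mathcal O(\log(1/\rho)/(p_{\rm succ}\epsilon_{\rm stat}^2))$ shots.

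Under this relative accuracy, $|\hat s/s-1|\le c\,\epsilon_{\rm stat}$. The second term of Step 2 is then bounded by $2|q|/s\cdot\mathcal O(c\,\epsilon_{\rm stat})\le\mathcal O(c\,\epsilon_{\rm stat})$, again using $|q|\le s$.

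\emph{Query counting.} Each Hadamard-test shot queries $O_{\rm prep}$ (inside $U_{\rm solver}$) once and $O_{\rm obs}$ once. Each success-probability shot queries $O_{\rm prep}$ once. Summing $N_H+N_S$ for $O_{\rm prep}$, and taking $N_H$ for $O_{\rm obs}$, yields the stated bound
\[
\mathcal O\!\left(\frac{\lambda(\bmm)^2\|\bu_0\|^2}{\|\widetilde{\bu}_T(\bmm)\|^2\epsilon_{\rm stat}^2}\log\frac1\rho\right).
\]
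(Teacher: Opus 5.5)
Your proof is correct and follows essentially the same route as the paper's: the same split of the ratio error, the Cauchy--Schwarz bound $|2p_H^u-1|\le\sqrt{p_{\rm succ}}$, additive Hoeffding for $\widehat p_H^u$ at accuracy $\propto\epsilon_{\rm stat}\sqrt{p_{\rm succ}}$, a multiplicative Chernoff bound for $\widehat p_{\rm succ}$ at relative accuracy $\propto\epsilon_{\rm stat}$ (the paper fixes the constants as $1/16$ and $1/8$), and a union bound. Your detour through additive Hoeffding for $\widehat p_{\rm succ}$ correctly shows why the multiplicative bound is needed to avoid an extra factor of $1/p_{\rm succ}$.
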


\begin{proof}[Proof of \cref{lem:stat_error}]
Let $\widehat{p}_H^u(\bmm)$ and $\widehat{p}_{\rm succ}(\bmm)$ denote the empirical frequencies obtained from $N_H$ Hadamard test and $N_q$ projected-success measurements, respectively. The finite-shot estimator and the corresponding infinite-shot loss are 
\[
\widehat{\mathcal{L}}_{\mathrm{norm}}^u(\bmm)=2-\frac{4\widehat{p}_H^u(\bmm)-2}{\sqrt{\widehat{p}_{\rm succ}(\bmm)}}, \quad 
\widetilde{\mathcal{L}}_{\mathrm{norm}}^u=2(\bmm)-\frac{4p_H^u(\bmm)-2}{\sqrt{p_{\rm succ}(\bmm)}}.
\]

Since the observation state is normalized and entirely supported on the projected-success subspace, the Cauchy--Schwarz inequality gives
\[
|2p_H^u(\bmm)-1| = \left|\operatorname{Re}\langle\psi_{\rm obs}|\psi_T(\bmm)\rangle\right|
\leq \left|\langle\psi_{\rm obs}|\psi_T(\bmm)\rangle\right| \leq \|P_u|\psi_T(\bmm)\rangle\| = \sqrt{p_{\rm succ}(\bmm)}.
\]

For $0<\epsilon_{\mathrm{stat}}\leq1$, consider the simultaneous concentration bounds
\[
\big|\widehat{p}_H^u(\bmm)-p_H^u(\bmm)\big|\leq\frac{\epsilon_{\mathrm{stat}}\sqrt{p_{\rm succ}(\bmm)}}{16},
\qquad
|\widehat{p}_{\rm succ}(\bmm)-p_{\rm succ}(\bmm)|\leq\frac{\epsilon_{\mathrm{stat}}p_{\rm succ}(\bmm)}{8}.
\]
On this event,
$
\widehat{p}_{\rm succ}(\bmm)\geq\left(1-\frac{\epsilon_{\mathrm{stat}}}{8}\right)p_{\rm succ}(\bmm)\geq\frac{7p_{\rm succ}(\bmm)}{8}>0,
$
so the finite-shot estimator is well defined. Moreover,
\begin{align*}
\big|\widehat{\mathcal{L}}_{\mathrm{norm}}^u(\bmm)
-\widetilde{\mathcal{L}}_{\mathrm{norm}}^u(\bmm)\big|
&\leq
\frac{4\big|\widehat{p}_H^u(\bmm)-p_H^u(\bmm)\big|}
{\sqrt{\widehat{p}_{\rm succ}(\bmm)}}
\\
&\quad
+|4p_H^u(\bmm)-2|
\left|
\frac{1}{\sqrt{\widehat{p}_{\rm succ}(\bmm)}}
-\frac{1}{\sqrt{p_{\rm succ}(\bmm)}}
\right|
\\
&\leq
\frac{\epsilon_{\mathrm{stat}}}{4}\sqrt{\frac{8}{7}}
+\frac{2|\widehat{p}_{\rm succ}(\bmm)-p_{\rm succ}(\bmm)|}
{\sqrt{\widehat{p}_{\rm succ}(\bmm)}
\big(\sqrt{\widehat{p}_{\rm succ}(\bmm)}+\sqrt{p_{\rm succ}(\bmm)}\big)}
\\
&\leq
\frac{\epsilon_{\mathrm{stat}}}{4}\sqrt{\frac{8}{7}}
+\frac{\epsilon_{\mathrm{stat}}}{4}\sqrt{\frac{8}{7}}
<\epsilon_{\mathrm{stat}}.
\end{align*}

Since $\widehat{p}_H^u(\bmm)$ is the empirical mean of $N_H$ independent Bernoulli measurements, Hoeffding's inequality yields
\[
\mathbb{P}\bigg(\big|\widehat{p}_H^u(\bmm)-p_H^u(\bmm)\big|>\frac{\epsilon_{\mathrm{stat}}\sqrt{p_{\rm succ}(\bmm)}}{16}\bigg)\leq2\exp\left(-\frac{N_H p_{\rm succ}(\bmm) \epsilon_{\mathrm{stat}}^2}{128}\right).
\]
Similarly, the multiplicative Chernoff bound gives
\[
\mathbb{P}\bigg(|\widehat{p}_{\rm succ}(\bmm)-p_{\rm succ}(\bmm)|>\frac{\epsilon_{\mathrm{stat}}p_{\rm succ}(\bmm)}{8}\bigg)\leq2\exp\left(-\frac{N_q p_{\rm succ}(\bmm) \epsilon_{\mathrm{stat}}^2}{192}\right).
\]
Therefore, it is sufficient to choose
$
N_H\geq\frac{128\log(4/\rho)}{p_{\rm succ}(\bmm)\epsilon_{\mathrm{stat}}^2},
N_q\geq\frac{192\log(4/\rho)}{p_{\rm succ}(\bmm)\epsilon_{\mathrm{stat}}^2}.
$
By the union bound, the two concentration inequalities then hold simultaneously with probability at least $1-\rho$. Consequently,
\[
\mathbb{P}\big(\big|\widehat{\mathcal{L}}_{\mathrm{norm}}^u(\bmm)-\widetilde{\mathcal{L}}_{\mathrm{norm}}^u(\bmm)\big|\leq\epsilon_{\mathrm{stat}}\big)\geq1-\rho.
\]
Therefore, $N_H,N_q=\mathcal{O}\left(\frac{1}{p_{\rm succ}(\bmm)\epsilon_{\mathrm{stat}}^2}\log\frac{1}{\rho}\right)$ and  $N_{\mathrm{total}}=\mathcal{O}\left(\frac{1}{p_{\rm succ}(\bmm)\epsilon_{\mathrm{stat}}^2}\log\frac{1}{\rho}\right).$

Under the one-query state-preparation assumption, each projected-success measurement queries
$O_{\rm prep}$ once, whereas each Hadamard test measurement queries both $O_{\rm prep}$ and $O_{\rm obs}$ once. Therefore, the numbers of queries to the two oracles both scale as
$
    \mathcal{O}\left(\frac{1}{p_{\rm succ}(\bmm)\epsilon_{\mathrm{stat}}^2}\log\frac{1}{\rho}\right).
$
The gate complexity of each circuit repetition depends on the specific implementation of $U_{\rm solver}(\bmm)$ used to prepare the terminal state.
\end{proof}

\begin{table}[ht]
\centering
\caption{Oracle-query complexity for one BayesOpt loss evaluation under given failure probability $\rho$ and statistical accuracy $\epsilon_{\textup{stat}}$.}
\label{tab:complexity_loss}
\footnotesize
\renewcommand{\arraystretch}{1.5}
\setlength{\tabcolsep}{4pt}
\begin{tabular}{ccc}
\toprule
Loss function & Queries to $O_{\rm prep}$ & Queries to $O_{\rm obs}$ \\
\midrule
$\mathcal{L}_{\rm norm}^{Y}(\bmm)$ & $\displaystyle \mathcal{O}\!\left(\frac{1}{p^Y_{\rm succ}(\bmm)\epsilon_{\rm stat}^{2}}\log\frac{1}{\rho}\right)$ & $\displaystyle \mathcal{O}\!\left(\frac{1}{p^Y_{\rm succ}(\bmm)\epsilon_{\rm stat}^{2}}\log\frac{1}{\rho}\right)$ \\[1.7ex]
$\mathcal{L}_{\rm norm}^{u}(\bmm)$ & $\displaystyle \mathcal{O}\!\left(\frac{1}{p_{\rm succ}(\bmm)\epsilon_{\rm stat}^{2}}\log\frac{1}{\rho}\right)$ & $\displaystyle \mathcal{O}\!\left(\frac{1}{p_{\rm succ}(\bmm)\epsilon_{\rm stat}^{2}}\log\frac{1}{\rho}\right)$ \\[1.7ex]
$\mathcal{L}_{\rm phys}(\bmm)$ & $\displaystyle \mathcal{O}\!\left(\frac{\lambda(\bmm)^{4}\|\bu_0\|^{4}+\lambda(\bmm)^{2}\|\bu_0\|^{2}\|\bu_{\rm obs}\|^{2}}{\epsilon_{\rm stat}^{2}}\log\frac{1}{\rho}\right)$ & $\displaystyle \mathcal{O}\!\left(\frac{\lambda(\bmm)^{2}\|\bu_0\|^{2}\|\bu_{\rm obs}\|^{2}}{\epsilon_{\rm stat}^{2}}\log\frac{1}{\rho}\right)$ \\
\bottomrule
\end{tabular}
\end{table}

The same argument applies to the lifted normalized loss by replacing $p_H^u(\bmm)$ and $p_{\rm succ}$ with $p_H^Y(\bmm)$ and $p^Y_{\rm succ}$, respectively. For the physical loss, the stated high-probability bound follows directly from applying Hoeffding's inequality to the linear estimator in \cref{eq:loss_phy} and using a union bound. Under the one-query state-preparation assumption, each projected-success measurement queries $O_{\rm prep}$ once, whereas each Hadamard test measurement queries both $O_{\rm prep}$ and $O_{\rm obs}$ once. The resulting oracle-query complexities for one BayesOpt loss evaluation are summarized in \cref{tab:complexity_loss}. The detailed gate complexity of each circuit depends on the specific implementation of $U_{\rm solver}(\bmm)$.

\section{Numerical experiments} \label{sec:numerical}
In this section, we investigate the numerical performance of the proposed loss functions within the quantum-assisted Bayesian optimization framework. 
All computations were performed on a laptop  with an Intel Core i9-13900HX processor. To assess the accuracy, we report the relative $L^2$ error ${\|\bu_{\rm obs} - \bu_T(\bmm)\|}/{\|\bu_{\rm obs}\|}$ as the evaluation metric, where $\bu_{\rm obs}$ denotes the noise-free observation, and $\bu_T(\bmm)$ denotes the numerical solution at the estimated optimal parameter $\bmm$.

\subsection{1-D convection-diffusion equation}
We first focus on a linear problem to assess the accuracy of the Taylor-truncated approximation to the forward solution and investigate its impact on the resulting parameter inversion. 

Consider the 1-D convection-diffusion equation
\[
    \begin{cases}
    \partial_tu = \left( r - \frac{1}{2}\sigma^2 \right) \partial_xu + \frac{1}{2}\sigma^2 \partial_{xx}u, \\
    u(x,0)=e^{-x^2}.
    \end{cases} \quad (x,t)\in[-L_0,L_0]\times[0,T],
\]
with homogeneous Dirichlet boundary conditions $u(-L_0,t)=u(L_0,t)=0$, where $L_0=4$ and $T=2$. We set $\bmm=(r, \sigma^2)$ and examine the approximation accuracy of the Taylor-truncated forward solution for different truncation orders. \cref{fig:cdeq_taylor} exhibits the numerical results and the corresponding experimental settings.

\begin{figure}[ht]
    \centering
    \includegraphics[width=\linewidth,height=0.25\linewidth]{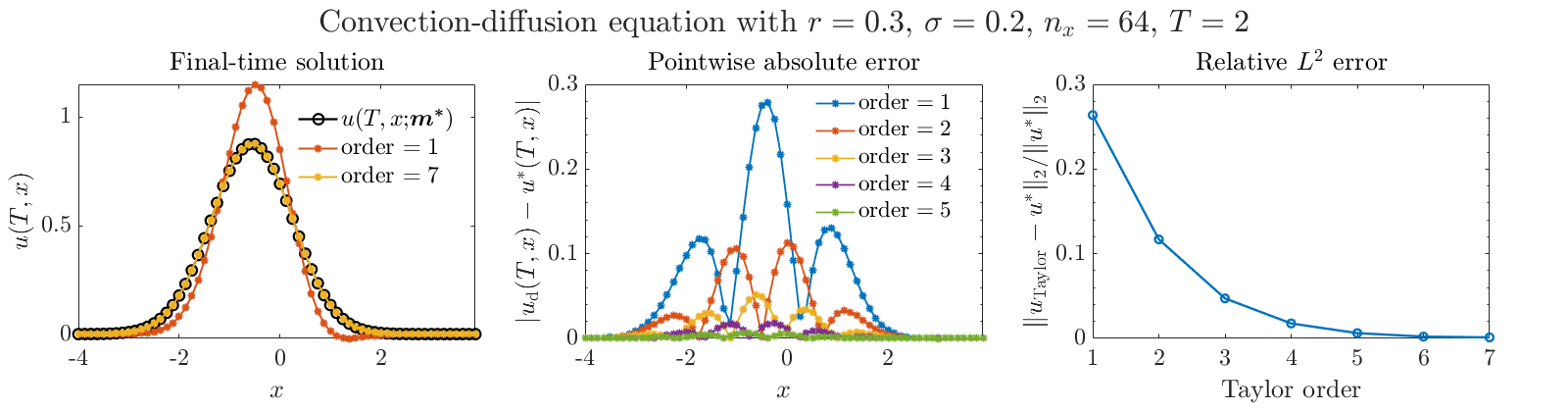}%
    \caption{Integration of the 1-D convection-diffusion equation using a Taylor-truncated approximation.}
    \label{fig:cdeq_taylor}
\end{figure}

We set the truncation order $n_{\rm Tay}=5$ and use ${n_x}=16$ spatial discretization points, with $\bmm^*=(r^*,\sigma^{*2})=(0.3,0.04)$ as the true parameter. To generate the training data, we independently perturb $r$ and $\sigma$ and sample 30 parameter points according to $\bmm^{\mathrm{train}}_i=\big(r^*(1+0.3z_{i,1}),\left[\sigma^*(1+0.3z_{i,2})\right]^2\big), \bm{z}_i\sim\mathcal{N}(\mathbf{0},I_2)$. The corresponding noisy numerical solutions are generated using the exact forward solver as
\[
    {\bu}_{\mathrm{num}}(T;\bmm^{\mathrm{train}}_i)=\bu(T;\bmm^{\mathrm{train}}_i)+0.2\cdot\frac{\|\bu(T;\bmm^*)\|}{\sqrt{n_x}}\cdot\boldsymbol{\xi}_i,\quad \boldsymbol{\xi}_i\sim\mathcal{N}(\mathbf{0},I_{n_x}).
\]

To further validate the proposed framework, we simulate the quantum circuits using the PennyLane Python library\footnote{PennyLane is an open-source software framework for quantum computing, quantum machine learning, and quantum chemistry; see \url{https://pennylane.ai}.} \cite{bergholm2018pennylane}. For the numerical experiment, the terminal state is prepared using generalized quantum signal processing (GQSP) applied to an $n_{\rm Tay}$-regular block encoding framework \cite{gutierrez2026quantum}. The corresponding required GQSP processing angles can be computed numerically within the same software environment. The block-encoding and the controlled unitary required by the Hadamard test are first constructed classically and then implemented as quantum circuit operators in the simulator. 

Using $\mathcal{L}_{\rm phys}$ as the loss function, we set the annealing parameter to be $\gamma=1/100$ and perform $100$ BayesOpt iterations in all experiments. \cref{fig:cdeq_classical} presents the classical results as a baseline. Since finite-shot Hadamard test measurements with sample size $N_H$ may produce a negative estimate of the physical loss, we use the clipped estimator $\widehat{\mathcal{L}}_{\rm phys}^{\rm clip}:=\max\{0,\widehat{\mathcal{L}}_{\rm phys}\}$ at each evaluation. This clipping may result in multiple evaluated parameters attaining the same objective value $f(\bmm)=1$ as shown in \cref{fig:cdeq_quantum}, making the optimized best parameter non-unique. The GPR surrogate provides a natural way to resolve this ambiguity: We select the parameter that maximizes the posterior mean of the objective function as the predicted inversion result and denote it by $\bmm_{\rm opt} = \arg\max_{\bmm} \mathbb{E}[f(\bmm) \mid \mathcal{D}]$. 

\begin{figure}[ht]
    \centering
    \subfloat[Baseline by classical computing with $\bmm_{\rm opt}=(0.2993,0.0343)$.]
    {\label{fig:cdeq_classical}\includegraphics[width=\linewidth,height=0.3\linewidth]{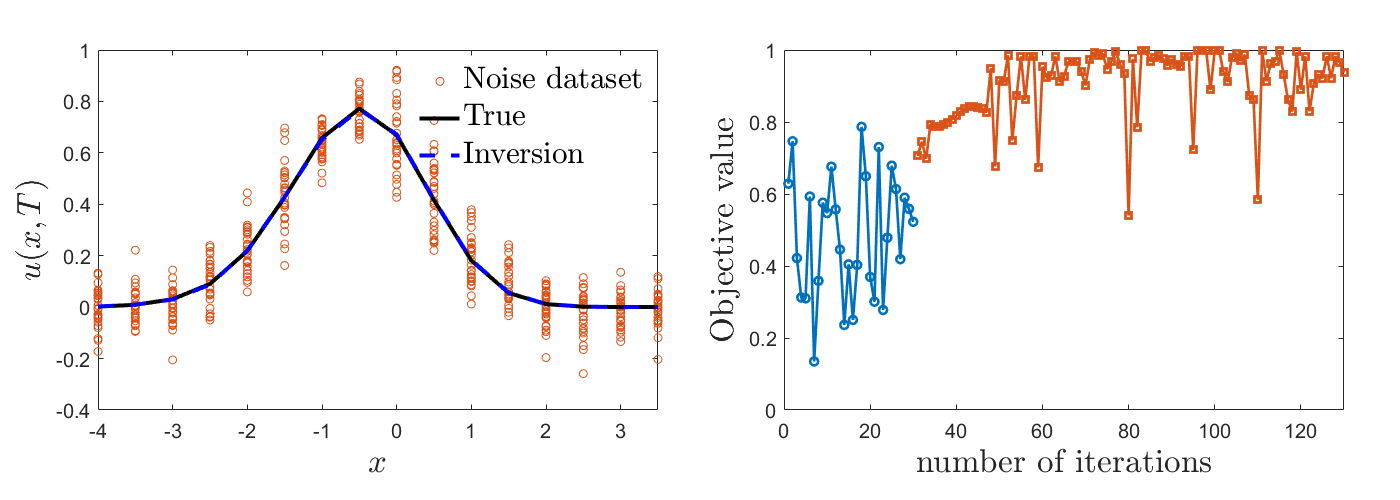}}\\
    \subfloat[One run with $N_H=10^9$ Hadamard test measurements and $\bmm_{\rm opt}=(0.2913,0.0596)$.]
    {\label{fig:cdeq_quantum}\includegraphics[width=\linewidth,height=0.3\linewidth]{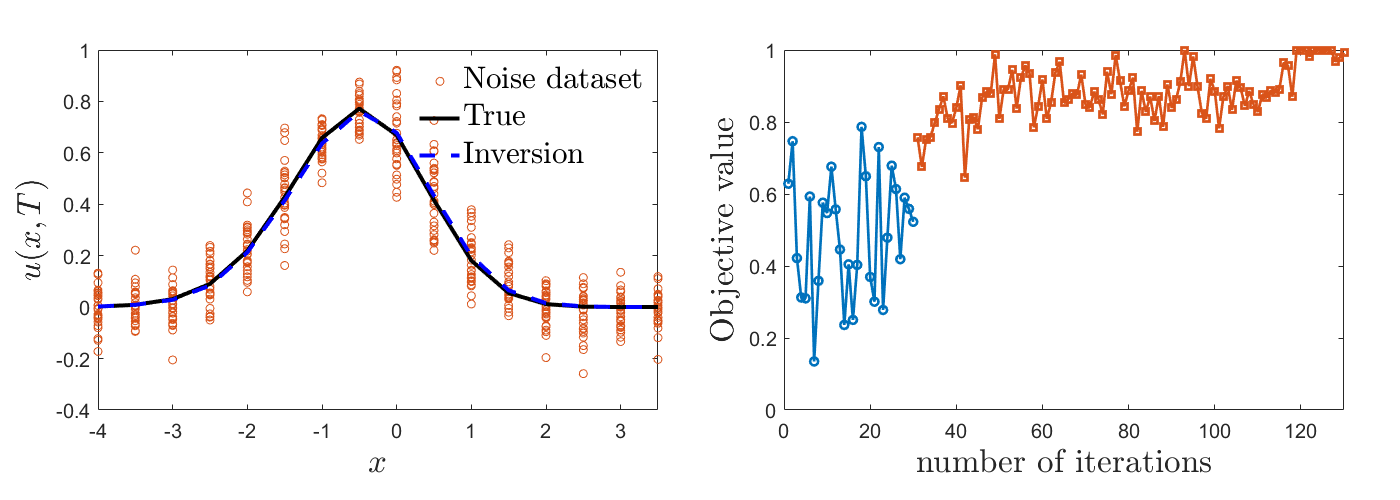}} \\
    \caption{Representative results for the 1-D convection-diffusion equation with $n_{\rm Tay}=5$ and $\bmm^*=(0.3,0.04)$. (a) baseline by classical computing with $\bmm_{\rm opt}=(0.2993,0.0343)$. (b) one run with $N_H=10^9$ Hadamard test measurements, yielding $\bmm_{\rm opt}=(0.2913,0.0596)$.}
    \label{fig:cdeq_BO}
\end{figure}

\begin{table}[ht]
\centering
\footnotesize
\setlength{\tabcolsep}{6pt}
\renewcommand{\arraystretch}{1.12}
\caption{Numerical results for the 1-D convection-diffusion equation with $\bmm^*=(0.3,0.04)$. For quantum simulation, results are reported as the mean $\pm$ standard deviation over 10 independent runs.}
\label{tab:cdeq_BO}
\begin{tabular}{@{}ccccc@{}}
\toprule
\multirow{2}{*}{Method}
& \multirow{2}{*}{\shortstack{Expected number\\of $\widehat{\mathcal{L}}_{\rm phys} \le 0$}} 
& \multicolumn{2}{c}{Estimated $\bmm_{\rm opt}$} & \multirow{2}{*}{\shortstack{Relative $L^2$ error}} \\ \cmidrule(lr){3-4}
& & $r$ & $\sigma^2$ & \\
\midrule
 Classical     & $0$ & $0.2993$ & $0.0343$ & $6.62\times10^{-3}$ \\
$N_H=\infty$   & $0$ & $0.2970$ & $0.0388$ & $5.74\times10^{-3}$ \\ 
 $N_H=10^3$ & $51.6\pm5.9$ & $0.4024\pm0.2116$ & $0.0762\pm0.1008$ & $(3.98\pm9.77)\times10^{-1}$ \\
 $N_H=10^5$ & $50.2\pm4.9$ & $0.3284\pm0.0158$ & $0.0623\pm0.0560$ & $(6.37\pm3.51)\times10^{-2}$ \\
 $N_H=10^7$ & $43.3\pm4.9$ & $0.3370\pm0.0072$ & $0.1314\pm0.0283$ & $(8.27\pm2.04)\times10^{-2}$ \\
 $N_H=10^9$ & $7.4\pm8.0$ & $0.3039\pm0.0324$ & $0.0589\pm0.0536$ & $(4.06\pm3.74)\times10^{-2}$ \\
 $N_H=10^{11}$ & $0.9\pm1.7$ & $0.2976\pm0.0169$ & $0.0358\pm0.0343$ & $(3.72\pm1.51)\times10^{-2}$ \\
\bottomrule
\end{tabular}
\end{table}

The detailed inversion results and the corresponding relative errors are reported in \cref{tab:cdeq_BO}, while the results obtained from quantum simulators with finite-shot Hadamard test measurements are averaged over 10 independent experimental runs to assess the stability of the proposed framework. Here, we treat the success probability $p_{\rm succ}$ as exact and consider only the finite-shot statistical error arising from the Hadamard test. The quantum-circuit simulations yield reasonable parameter estimates, demonstrating the feasibility of the proposed framework under finite-shot measurement noise.

\subsection{2-D convection-diffusion equation}
We next consider the 2-D case of the convection-diffusion equation defined in $(x,y,t)\in\Omega\times[0,T],$
\[ \begin{cases}
    \partial_t u  = D\left(\partial_{xx}u+\partial_{yy}u\right)  -  v\left(\partial_x u
    +\partial_y u\right),  \\[1mm]
    u(x,y,0) = \exp\left( -\frac{(x-0.25)^2+(y-0.35)^2}{2(0.06)^2} \right)  +
    0.7\exp\left( -\frac{(x-0.70)^2+(y-0.20)^2}{2(0.09)^2} \right),
    \end{cases}
\]
with periodic boundaries in both spatial directions, where $T=1$ and $\Omega=[0,1]^2$. We set \(\bmm=(D,v)\) and assume that the convection velocities in the two spatial directions are identical. The initial condition is periodized in both spatial variables.

We use a $64\times64$ spatial grid with $n_x=n_y=64$ and set $\bmm^*=(D^*,v^*)=(0.05,1)$ as the true parameter. The parameter bounds are $D\in[0,0.2]$ and $v\in[0.2,1.8]$. To generate the initial training data, we independently perturb $D$ and $v$ and sample 30 parameter points
according to $\bmm^{\mathrm{train}}_i = \left( D^*(1+0.3z_{i,1}), v^*(1+0.3z_{i,2}) \right)$,
where $\bm{z}_i\sim\mathcal{N}(\mathbf{0},I_2)$. Samples outside the prescribed parameter bounds are clipped to the admissible parameter domain. For a prescribed relative noise level $\eta$, the noisy numerical solution at each evaluated parameter is generated as
\[
    \bu_{\mathrm{num}}(T;\bmm_i) = \bu(T;\bmm_i) + \eta\cdot \frac{\|\bu(T;\bmm^*)\|}{\sqrt{n_x n_y}} \cdot \boldsymbol{\xi}_i,
    \qquad \boldsymbol{\xi}_i  \sim  \mathcal{N}(\mathbf{0},I_{n_x\cdot n_y}).
\]

We consider the relative noise levels $\eta\in\{0.1,0.2,0.3\}$ to investigate the robustness of the proposed framework. These values specify the noise magnitude used in both the
initial training data and the subsequent BayesOpt evaluations. \cref{tab:cdeq_2d_noise} reports the inferred parameters and relative $L^2$ errors for three noise levels. \cref{fig:cdeq_2d_inverse} compares the exact solution, the reconstructed solution obtained using the mean GPR prediction $\overline{\bmm}_{\mathrm{opt}}=(0.0530,1.0000)$ estimate at $\eta=0.2$ over 10 independent runs, and their pointwise absolute error. The results demonstrate that the proposed framework provides stable parameter estimates under the prescribed noise perturbations.

\begin{table}[ht]
\centering
\footnotesize
\setlength{\tabcolsep}{6pt}
\renewcommand{\arraystretch}{1.12}
    \caption{Numerical results for the 2-D convection-diffusion equation with $\bmm^*=(0.05,1)$. Results are reported as the mean $\pm$ standard deviation over 10 independent runs.}
\label{tab:cdeq_2d_noise}
\begin{tabular}{@{}cccc@{}}
    \toprule
    \multirow{2}{*}{Noise level}
    & \multicolumn{2}{c}{Estimated $\bmm_{\rm opt}$}
    & \multirow{2}{*}{\shortstack{Relative $L^2$ error}} \\
    \cmidrule(lr){2-3} & $D$ & $v$ & \\
    \midrule
    $\eta=0.1$ & $0.0511\pm0.0010$ & $0.9975\pm0.0131$ & $(1.48\pm0.40)\times10^{-2}$ \\
    $\eta=0.2$  & $0.0530\pm0.0021$ & $1.0000\pm0.0134$ & $(2.26\pm0.90)\times10^{-2}$ \\
    $\eta=0.3$  & $0.0568\pm0.0018$ & $0.9949\pm0.0123$  & $(3.94\pm0.82)\times10^{-2}$ \\
    \bottomrule
\end{tabular}
\end{table}

\begin{figure}[H]
    \centering
    \includegraphics[width=\linewidth,height=0.25\linewidth]{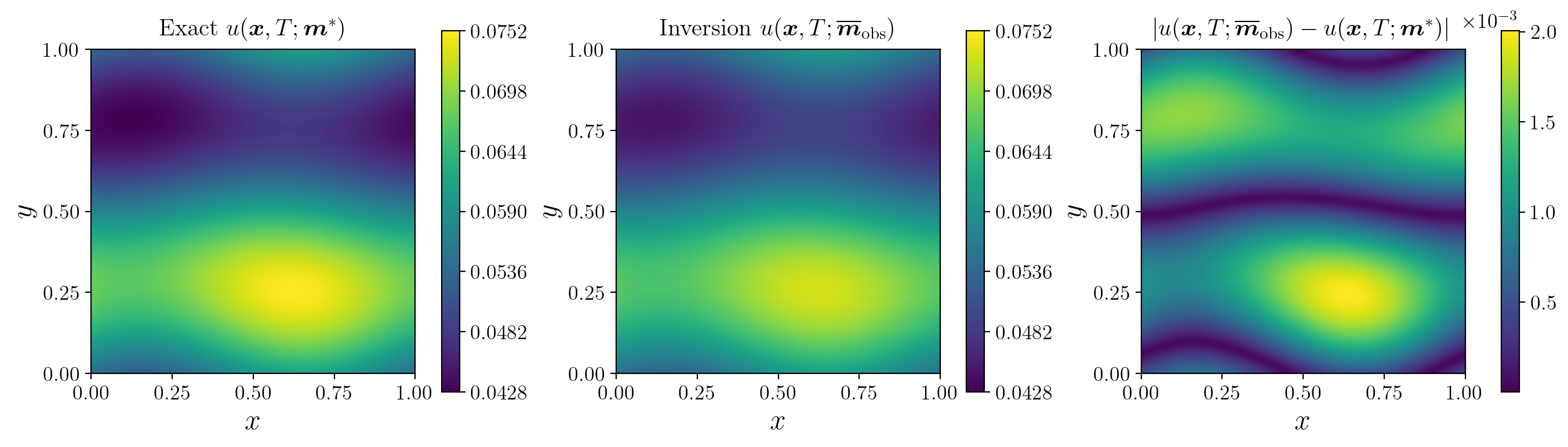}%
    \caption{Comparison of the reference and reconstructed solutions for the 2-D convection-diffusion equation at $T=1$ and noise level $\eta=0.2$. Left: exact solution with $\bmm^*=(0.05,1)$. Middle: reconstructed solution using the mean GPR estimate $\overline{\bmm}_{\mathrm{opt}}=(0.0530,1.0000)$ over 10 independent runs. Right: pointwise absolute error between the reconstructed and exact solutions.}
    \label{fig:cdeq_2d_inverse}
\end{figure}

\subsection{1-D forced viscous Burgers equation} \label{sec:burgers_eq}
We next consider the 1-D forced viscous Burgers equation
\begin{equation}
    \partial_t u + u\partial_x u = \nu \partial^2_x u  + f, \quad (x,t)\in[-L_0/2,L_0/2]\times[0,T],
\end{equation}
where the initial condition $u(x,0)=U_0\sin(2\pi x/L_0)$ is imposed on the computational domain with $L_0=1$, together with homogeneous Dirichlet boundary conditions $u(-L_0/2,t)=u(L_0/2,t)=0$. We use a time-independent localized off-center Gaussian forcing term $f(x)=U_0\exp\left(-\frac{(x-L_0/4)^2}{2(L_0/32)^2}\right)$, where $U_0=1/\sqrt{n_x-1}$. The viscosity is parameterized as $\nu=U_0L_0/\mathrm{Re}$, where $\mathrm{Re}$ denotes the Reynolds number. In the inverse problem, $\bmm=\mathrm{Re}$ is treated as the unknown parameter over the admissible interval $\mathrm{Re}\in(0,30]$. 

\subsubsection{Forward simulation}

Three cases with different Reynolds numbers and final times are considered, as summarized in \cref{tab:Burger_case}. First, we use Case II to investigate the effect of the Carleman truncation order $N$ on the accuracy of the numerical solution\footnote{We use the open-source implementation of Carleman linearization for the Burgers equation, available at \url{https://github.com/hermankolden/CarlemanBurgers}.}. Since our focus here is solely on the error induced by the Carleman truncation, the matrix-exponential action in the forward solver is evaluated using a scaling strategy together with an $n_{\rm Tay}=2$ truncated Taylor approximation \cite{al2011computing}, so that the associated numerical error is negligible. \cref{fig:Carleman_linear} compares the exact terminal solution with the Carleman approximations at different truncation orders and reports the corresponding pointwise and relative $L^2$ errors, illustrating the improvement in approximation accuracy as $N$ increases.

\begin{table}[ht]
    \centering
    \footnotesize
    \caption{Numerical settings for the forced viscous Burgers equation.}
    \begin{tabular}{cccccc}
        \toprule
        Numerical setting & ${n_x}$ & $T$ & Carleman order $N$  & Reynolds number  \\
        \midrule
        Case I   & 16  & 1  & 3  & 18  \\
        Case II  & 16  & 2  & 3  & 14  \\
        Case III & 16  & 3  & 2  & 10  \\
        \bottomrule
    \end{tabular}
    \label{tab:Burger_case}
\end{table}

\begin{figure}[htbp]
    \label{fig:Carleman_linear}
    \centering
    \includegraphics[width=\linewidth,height=0.25\linewidth]{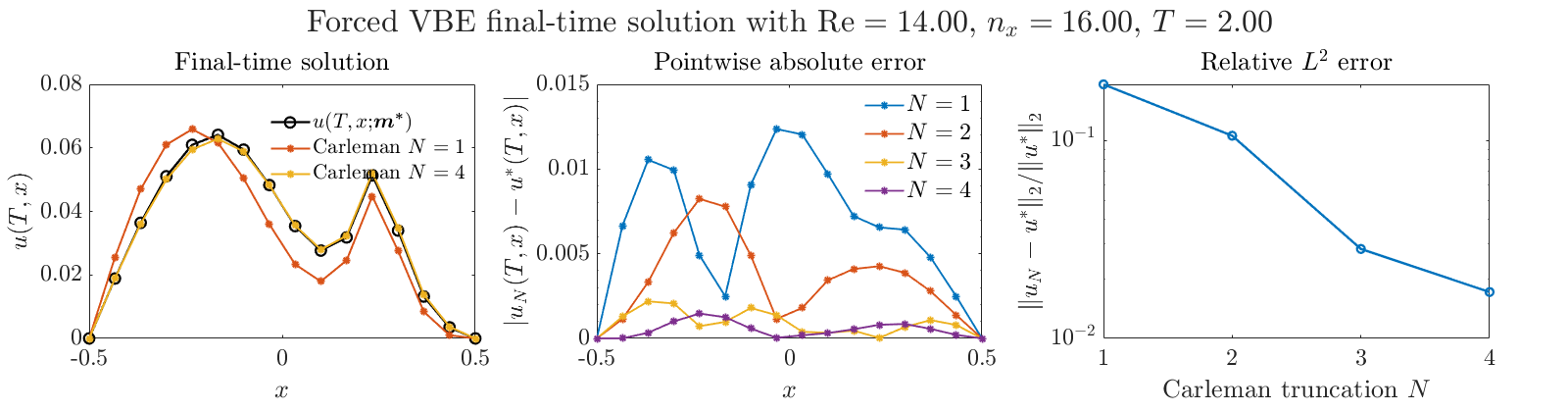}%
    \caption{Numerical integration of the forced viscous Burgers equation using Carleman linearization on a classical computer. Left: exact solution computed using MATLAB's ode45 solver. Center: pointwise absolute errors between Carleman solutions at different truncation levels $N$ and the exact solution. Right: relative $L^2$ errors versus the truncation order $N$.}
\end{figure}

\subsubsection{PDE inversion under different loss functions}

We next solve the inverse problem based on Carleman linearization and the truncated Taylor approximation, and then examine the inversion results for different terminal times $T$ and Reynolds numbers. All numerical experiments in this subsection are performed using classical simulations with a constant annealing parameter $\gamma=1/100$ and \(100\) BayesOpt iterations, except for all Case III experiments and the Case II experiments using the physical loss with $N_H=10^6$ and the normalized lifted loss with $N_H=10^4$, for which 150 BayesOpt iterations are employed. We consider the predicted best parameter $\bmm_{\rm opt}=\arg\max_{\bmm}\mathbb{E}[f(\bmm)\mid\mathcal{D}]$ inferred from the trained GPR.  

For data generation, we sample 30 training parameters according to $\bmm^{\mathrm{train}}_i = \bmm^*(1 + 0.3 z_i), z_i \sim \mathcal{N}(0,1).$ The corresponding solution snapshots are obtained using an exact forward solver. We introduce multiplicative noise in the forward evaluations used to construct the training dataset, 
\[
{\bu}_{\mathrm{num}}(T;\bmm^{\mathrm{train}}_i)= \bu_T(\bmm^{\mathrm{train}}_i) \odot \left(\mathbf{1} + 0.2\,\bm{\xi}_i\right), \bm{\xi}_i \sim \mathcal{N}(\mathbf{0}, I_{n_x}),
\]
so that the homogeneous boundary conditions are preserved.

\begin{table}[ht]
\centering
\footnotesize
\caption{Parameter-inversion results for the forced viscous Burgers equation. Finite-shot Hadamard test results are reported as the mean $\pm$ standard deviation over 10 independent runs.}

\resizebox{\linewidth}{!}{
\begin{tabular}{cccccc}
\toprule
\multirow{2}{*}{Case} & \multirow{2}{*}{True $\bmm^*$} & \multicolumn{2}{c}{Infinite-shot baseline: $N_H = \infty$} & \multicolumn{2}{c}{$N_H=10^2$} \\
\cmidrule(lr){3-4}\cmidrule(lr){5-6}
& & Estimated $\bmm_{\rm opt}$ & Relative $L^2$ error & Estimated $\bmm_{\rm opt}$ & Relative $L^2$ error \\
\midrule

\multicolumn{6}{c}{Physical loss $\mathcal{L}_{\textup{phys}}(\bmm) = \|\bu_T(\bmm)\|^2 + \|\bm{u}_{\rm obs}\|^2 - 2\mathrm{Re}\braket{\bm{u}_{\rm obs},\bu_T(\bmm)}$} \\
\midrule
I   & 18.00 & 17.24 & $5.02\times10^{-2}$ & $17.22\pm0.15$ & $5.06\times10^{-2}\pm4.95\times10^{-4}$ \\
II  & 14.00 & 13.87 & $2.91\times10^{-2}$ & $13.87\pm0.16$ & $3.08\times10^{-2}\pm3.19\times10^{-3}$ \\
III & 10.00 & 9.85  & $8.66\times10^{-3}$ & $9.85\pm0.28$  & $1.42\times10^{-2}\pm5.74\times10^{-3}$ \\

\midrule
\multicolumn{6}{c}{Normalized lifted loss $\mathcal{L}_{\rm norm}^{Y}(\bmm) = 2 - 2\frac{\mathrm{Re}\braket{\bY_{\rm obs},\bY_T(\bmm)}}{\|\bY_{\rm obs}\|\|\bY_T(\bmm)\|}$} \\
\midrule
I   & 18.00 & 17.24 & $5.02\times10^{-2}$ & $17.63\pm0.74$ & $5.89\times10^{-2}\pm8.81\times10^{-3}$ \\
II  & 14.00 & 13.90 & $2.87\times10^{-2}$ & $13.96\pm0.82$ & $5.13\times10^{-2}\pm3.25\times10^{-2}$ \\
III & 10.00 & 9.76  & $9.18\times10^{-3}$ & $12.85\pm4.81$ & $1.48\times10^{-1}\pm2.11\times10^{-1}$ \\

\midrule
\multicolumn{6}{c}{Normalized loss $\mathcal{L}_{\rm norm}^{u}(\bmm) = 2 - 2\frac{\mathrm{Re}\braket{\bu_{\rm obs},\bu_T(\bmm)}}{\|\bu_{\rm obs}\|\|\bu_T(\bmm)\|}$} \\
\midrule
I   & 18.00 & 17.81 & $5.48\times10^{-2}$ & $18.53\pm1.40$ & $8.26\times10^{-2}\pm2.81\times10^{-2}$ \\
II  & 14.00 & 14.02 & $2.85\times10^{-2}$ & $14.13\pm0.14$ & $3.13\times10^{-2}\pm3.89\times10^{-3}$ \\
III & 10.00 & 10.90 & $5.02\times10^{-2}$ & $5.54\pm2.17$  & $2.94\times10^{-1}\pm1.15\times10^{-1}$ \\
\bottomrule
\end{tabular}
}

\vspace{1em}
\resizebox{\linewidth}{!}{
\begin{tabular}{cccccc}
\toprule
\multirow{2}{*}{Case} & \multirow{2}{*}{True $\bmm^*$} & \multicolumn{2}{c}{$N_H=10^4$} & \multicolumn{2}{c}{$N_H=10^6$} \\
\cmidrule(lr){3-4}\cmidrule(lr){5-6}
& & Estimated $\bmm_{\rm opt}$ & Relative $L^2$ error & Estimated $\bmm_{\rm opt}$ & Relative $L^2$ error \\
\midrule

\multicolumn{6}{c}{Physical loss $\mathcal{L}_{\textup{phys}}(\bmm) = \|\bu_T(\bmm)\|^2 + \|\bm{u}_{\rm obs}\|^2 - 2\mathrm{Re}\braket{\bm{u}_{\rm obs},\bu_T(\bmm)}$} \\
\midrule
I   & 18.00 & $17.25\pm0.03$ & $5.02\times10^{-2}\pm4.05\times10^{-5}$ & $17.24\pm0.01$ & $5.02\times10^{-2}\pm1.39\times10^{-5}$ \\
II  & 14.00 & $13.84\pm0.13$ & $3.05\times10^{-2}\pm4.79\times10^{-3}$ & $13.93\pm0.03$ & $2.85\times10^{-2}\pm1.41\times10^{-4}$ \\
III & 10.00 & $9.84\pm0.04$  & $8.82\times10^{-3}\pm1.22\times10^{-4}$ & $9.83\pm0.04$  & $8.76\times10^{-3}\pm2.20\times10^{-4}$ \\

\midrule
\multicolumn{6}{c}{Normalized lifted loss $\mathcal{L}_{\rm norm}^{Y}(\bmm) = 2 - 2\frac{\mathrm{Re}\braket{\bY_{\rm obs},\bY_T(\bmm)}}{\|\bY_{\rm obs}\|\|\bY_T(\bmm)\|}$} \\
\midrule
I   & 18.00 & $17.26\pm0.09$ & $5.03\times10^{-2}\pm1.28\times10^{-4}$ & $17.24\pm0.01$ & $5.02\times10^{-2}\pm4.26\times10^{-6}$ \\
II  & 14.00 & $13.98\pm0.55$ & $4.04\times10^{-2}\pm2.17\times10^{-2}$ & $13.90\pm0.00$ & $2.87\times10^{-2}\pm7.31\times10^{-18}$ \\
III & 10.00 & $9.76\pm0.07$  & $9.68\times10^{-3}\pm1.34\times10^{-3}$ & $9.77\pm0.01$ & $9.06\times10^{-3}\pm1.92\times10^{-4}$ \\

\midrule
\multicolumn{6}{c}{Normalized loss $\mathcal{L}_{\rm norm}^{u}(\bmm) = 2 - 2\frac{\mathrm{Re}\braket{\bu_{\rm obs},\bu_T(\bmm)}}{\|\bu_{\rm obs}\|\|\bu_T(\bmm)\|}$} \\
\midrule
I   & 18.00 & $17.99\pm0.54$ & $5.96\times10^{-2}\pm1.45\times10^{-2}$ & $17.78\pm0.04$ & $5.43\times10^{-2}\pm6.45\times10^{-4}$ \\
II  & 14.00 & $14.01\pm0.02$ & $2.84\times10^{-2}\pm9.02\times10^{-5}$ & $14.02\pm0.01$ & $2.84\times10^{-2}\pm7.36\times10^{-5}$ \\
III & 10.00 & $7.88\pm3.43$  & $2.00\times10^{-1}\pm1.48\times10^{-1}$ & $10.98\pm0.09$ & $5.36\times10^{-2}\pm4.09\times10^{-3}$ \\
\bottomrule
\end{tabular}
}

\label{tab:Burger_results}
\end{table}

\begin{figure}
    \centering
    \includegraphics[width=1\linewidth]{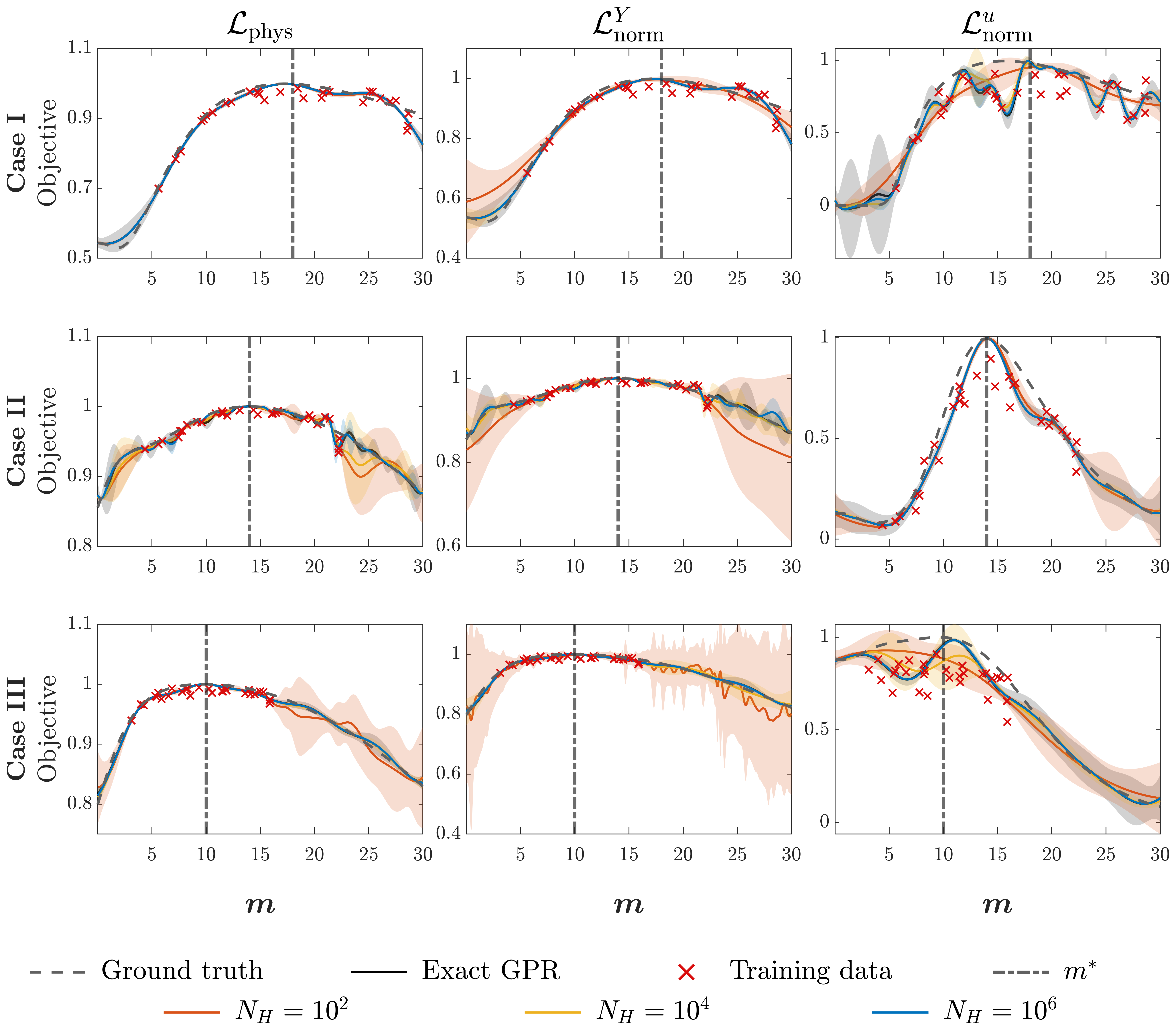}
    \caption{The inversion results for the forced viscous Burgers equation under different loss functions. The three columns correspond to the physical loss $\mathcal{L}_{\rm phys}(\bmm)$, the lifted normalized loss $\mathcal{L}_{\rm norm}^{Y}(\bmm)$, and the projected normalized loss $\mathcal{L}_{\rm norm}^{u}(\bmm)$, respectively.}
    \label{fig:Burger_results_inversion}
\end{figure}

The performance of BayesOpt is compared across three loss functions. We first neglect the Hadamard test sampling error and evaluate the normalized inner product exactly ($N_H = \infty$), and then report both the infinite-shot baseline and finite-shot Hadamard test emulations with $N_H\in\{10^2,10^4,10^6 \}$. \cref{tab:Burger_results} summarizes the predicted parameters and the corresponding inversion errors. Although the values of the objective function in the Carleman system become very close after normalization in the $D_N$-dimensional lifted space, BayesOpt is still able to exploit these small discrepancies and gradually converge to a neighborhood of the true parameter as the number of iterations increases. 

Overall, both quantum-state loss functions yield reasonable inversion results, demonstrating the feasibility of the proposed quantum loss-evaluation framework within the BayesOpt procedure. \cref{fig:Burger_results_inversion} shows the comparison between the trained GPR surrogate and the exact objective function under different loss functions and settings. The corresponding forward solutions and optimization trajectories are provided in the supplementary material. It is seen that the performance of the normalized lifted loss is comparable to that of the physical loss. The uncertainty associated with the normalized loss appears larger during inversion, which may facilitate broader exploration of the parameter space.

\section{Conclusion and discussion} \label{sec:conclusion}
In this work, we propose a quantum-assisted Bayesian optimization framework for PDE-based inverse problems. The quantum component evaluates the data-misfit loss by estimating quantum-state overlaps, while the classical component employs Bayesian optimization to select candidate parameters and trains a GPR surrogate model of the objective function. We show how to estimate loss functions between normalized vectors encoded in quantum states and how to reconstruct the corresponding physical loss with the correct classical norm scaling. We further analyze the computational complexity and error bounds of the proposed framework. Numerical experiments on a convection-diffusion equation using quantum-circuit simulations and a forced viscous Burgers equation using Carleman linearization are presented to demonstrate the feasibility and effectiveness of the proposed approach.

A limitation of the present quantum-assisted framework is the assumption of efficient oracle access to the observation state. For arbitrary dense and unstructured observational data, the associated state-preparation cost may be non-negligible. Nevertheless, observations in PDE-based Bayesian inverse problems are often collected from a sparse set of sensors \cite{alexanderian2014optimal,alexanderian2021optimal}, and additional sparsity or functional structures in the observational data may be exploited by specialized quantum state-preparation techniques \cite{zhang2022quantum,ramacciotti2024simple,mao2024toward,li2025nearly,alexanderian2021optimal}. Incorporating such structure-aware preparation strategies and explicitly accounting for their cost within the overall quantum-assisted inversion framework constitute important directions for future work. At the same time, the BayesOpt surrogate modeling  in our framework remains classical. 

It deserves to mention that quantum kernel methods \cite{schuld2021quantum} and quantum-assisted GPR \cite{zhao2019quantum} have recently been developed and may provide computational advantages for certain problem instances. Integrating the proposed quantum loss-evaluation procedure with quantum-enhanced surrogate modeling and optimization therefore represents a meaningful direction for future research.

\appendix

\section{Quantum PDE solvers}

We first consider an evolution PDE parameterized by $\bmm=(m_1,\ldots,m_d)$ in a feasible parameter set $\mathcal{M}$,
\begin{equation}
\label{eq:pde}
\frac{\partial u(\boldsymbol{x},t;\bmm)}{\partial t}  = \mathcal{F}\bigl(u(\boldsymbol{x},t;\bmm),t;\bmm\bigr),
\qquad u(\boldsymbol{x},0;\bmm)=u_0(\boldsymbol{x}),
\end{equation}
where $\mathcal{F}$ denotes a possibly linear or nonlinear differential operator and may also include source terms.

After spatial discretization, \cref{eq:pde} gives rise to a finite-dimensional dynamical system for the discrete solution vector $\bu(t;\bmm)$. We first consider the simplest case in which the resulting system is linear and autonomous,
\[
    \frac{d\bu(t;\bmm)}{dt} = A(\bmm)\bu(t;\bmm), \qquad \bu(0;\bmm)=\bu_0,
\]
where $A(\bmm)$ is the parameter-dependent, time-independent system matrix arising from the spatial discretization. The terminal solution has a closed form solution $\bu_T(\bmm) = e^{T A(\bmm)}\bu_0$. This matrix-exponential representation provides a natural starting point for block-encoding-based quantum algorithms that approximate the action of $e^{T A(\bmm)}$ through polynomial transformations of the encoded matrix, subject to the structural requirements of the corresponding polynomial-transformation framework \cite{low2017optimal,gutierrez2026quantum,motlagh2024generalized}.

To illustrate the construction of the loss functions in concrete settings, we consider two representative examples: a linear non-autonomous equation treated using the linear combination of Hamiltonian simulation (LCHS) technique \cite{an2023linear}, and a nonlinear equation treated by Carleman linearization within a generalized quantum signal processing (GQSP) framework \cite{gutierrez2026quantum,motlagh2024generalized}.

\subsection{Linear non-autonomous evolution via LCHS}

We next consider the semi-discretized linear non-autonomous system
\[
    \frac{\mathrm{d}\bu(t;\bmm)}{\mathrm{d}t}=A(t;\bmm)\bu(t;\bmm), \qquad \bu(0;\bmm)=\bu_0,
\]
where $\bu(t;\bmm)=(u(\bx_1,t;\bmm),\dots,u(\bx_{n_x},t;\bmm))^T$, and $A(t;\bmm)$ is generally time dependent and non-Hermitian. Let $\Phi_{\bmm}(t,s)$ denote the corresponding evolution operator. Then
\[
    \Phi_{\bmm}(t,s)=\mathcal{T}\exp\left(\int_s^t A(\tau;\bmm)\,\mathrm{d}\tau\right), \quad
   \bu_T(\bmm)=\Phi_{\bmm}(T,0)\bu_0,
\]
where $\mathcal{T}$ is the time ordering operator.

Following the LCHS framework \cite{an2023linear}, we assume that the Hermitian part of $A(t;\bmm)$ is negative semi-definite and decompose $A(t;\bmm)=-\bigl(L(t;\bmm)+\mathrm{i}H(t;\bmm)\bigr),$ where $L(t;\bmm)=-(A(t;\bmm)+A^\dagger(t;\bmm))/2\succeq0$, $H(t;\bmm)=(A^\dagger(t;\bmm)-A(t;\bmm))/(2\mathrm{i})$ are both Hermitian. Under this assumption, the nonunitary evolution operator admits the LCHS representation
\[
    \Phi_{\bmm}(T,0)=\int_{\mathbb{R}}\frac{1}{\pi(1+k^2)}U_k(T,0;\bmm)\,\mathrm{d}k .
\]
With
\[
    U_k(T,s;\bmm)=\mathcal{T}\exp\left[-\mathrm{i}\int_s^T\bigl(H(\tau;\bmm)+kL(\tau;\bmm)\bigr)\,\mathrm{d}\tau\right].
\]

Since $H(\tau;\bmm)+kL(\tau;\bmm)$ is Hermitian for every $k\in\mathbb{R}$, each $U_k(T,s;\bmm)$ is a unitary time evolution. Hence, LCHS represents the original nonunitary dynamics as a continuous linear combination of Hamiltonian simulations.

For a finite quantum implementation, the integral over $k$ is truncated to $[-K,K]$ and subsequently discretized. Let $\{k_j\}_{j=0}^{M}\subset[-K,K]$ and $\{\omega_j\}_{j=0}^{M}$ denote the quadrature nodes and weights, respectively. By defining $c_j=\omega_j/[\pi(1+k_j^2)]$, the terminal solution is approximated by
\[
    \widetilde{\bu}_T(\bmm)=\sum_{j=0}^{M}c_jU_{k_j}(T,0;\bmm)\bu_0.
\]

For the positive quadrature weights considered in the linear combinations of unitaries (LCU) implementation, define the normalization factor $\lambda(\bmm)=\sum_{j=0}^{M}c_j$. In the homogeneous setting considered here, $\lambda(\bmm)$ is determined by the truncation and quadrature rule; in particular, since the Cauchy kernel is normalized according to $\int_{\mathbb{R}}[\pi(1+k^2)]^{-1}\mathrm{d}k=1$, one has $\lambda(\bmm)\to1$ as the truncation errors vanish.

The discretized linear combination can then be implemented coherently through the standard LCU construction. In particular, the coefficient-preparation and selection unitaries satisfy
\[
    O_{\rm coef}\ket{0}^{\otimes a_k}=\frac{1}{\sqrt{\lambda(\bmm)}}\sum_{j=0}^{M}\sqrt{c_j}\ket{j}, \quad
    \operatorname{SELECT}(U)=\sum_{j=0}^{M}\ket{j}\!\bra{j}\otimes U_{k_j}(T,0;\bmm),
\]
where $a_k$ denotes the number of qubits in the coefficient register. After applying the controlled Hamiltonian simulations and reversing the coefficient-state preparation, the desired linear combination is encoded in the ancillary all-zero branch.

We assume that normalized amplitude encoding $\ket{\bu_0}$ can be prepared by an initial-state preparation oracle $O_{\rm prep}:\ket{0}\rightarrow\ket{\bu_0}$. Therefore, the LCHS circuit produces
\[
\begin{aligned}
    \ket{\psi_T(\bmm)} & := U_{\mathrm{solver}}(\bmm)\ket{0} = U_{\rm LCHS}(\bmm)\ket{0}^{\otimes a}\ket{\bu_0} \\
    & =\frac{\|\widetilde{\bu}_T(\bmm)\|}{\lambda(\bmm)\|\bu_0\|}\ket{0}^{\otimes a}\ket{\widetilde{\bu}_T(\bmm)} +\ket{\Phi_T^\perp(\bmm)},
\end{aligned}
\]
where $\ket{\widetilde{\bu}_T(\bmm)} = \sum_{j=0}^{{n_x}-1} \widetilde{u}_{T,j+1}(\bmm)\ket{j}/\|\widetilde{\bu}_T(\bmm)\|$ is the normalized amplitude encoding of $\widetilde{\bu}_T(\bmm)$, and $a$ denotes the total number of ancillary qubits used by the coherent LCHS implementation $\left(\bra{0}^{\otimes a}\otimes I\right)\ket{\Phi_T^\perp(\bmm)}=0.$

Thus, the LCHS construction naturally fits the general coherent-output form adopted in our framework: the approximate terminal solution is encoded in a designated successful branch, while all remaining ancillary and garbage components are orthogonal to that branch.

To apply the Hadamard test construction introduced in the main text, we define the normalized amplitude encoding of the observation $\bu_{\rm obs}=(u_{{\rm obs},1},\dots,u_{{\rm obs},{n_x}})^T$ as $\ket{\bu_{\rm obs}}.$ The corresponding observation state in the successful branch is 
\[
    \ket{\psi_{\rm obs}} = \ket{0}^{\otimes a} \otimes \left( \sum_{j=0}^{{n_x}-1} \frac{1}{\|\bu_{\rm obs}\|} u_{{\rm obs},j+1}\ket{j} \right). 
\]
We assume access to an observation-state oracle $O_{\rm obs}$ satisfying $\ket{\psi_{\rm obs}}=O_{\rm obs}\ket{0}$. To estimate the overlap, we define the unitary operator $W_u(\bmm)=O_{\rm obs}^\dagger U_{\rm solver}(\bmm)$. Given an auxiliary qubit initialized in $\ket{0}_H$, the Hadamard test circuit  reads \cite{nielsen2010quantum}
\begin{equation}  \label{eq:hadamard}
\begin{aligned}
    \ket{0}_H\ket{0}
    &\xrightarrow{H\otimes I}
    \frac{\ket{0}_H+\ket{1}_H}{\sqrt 2}\ket{0}
    \xrightarrow{c-W_u(\bmm)}
    \frac{1}{\sqrt 2}
    \big( \ket{0}_H\ket{0} +\ket{1}_H W_u(\bmm)\ket{0} \big) \\
    &\xrightarrow{H \otimes I}
    \frac{1}{2}\ket{0}_H
    \big( \ket{0} + W_u(\bmm)\ket{0} \big) +
    \frac{1}{2}\ket{1}_H
    \big( \ket{0} - W_u(\bmm)\ket{0} \big).
\end{aligned}
\end{equation}
Consequently, the probability of measuring the auxiliary qubit to be in state $\ket{0}_H$ is
\begin{equation*} 
    \begin{aligned}
    p^u_H(\bmm)
    &= \left\| \frac{1}{2}\left( \ket{0} + W_u(\bmm)\ket{0} \right)
    \right\|^2 \\
    &=  \frac{1}{4}
    \left( \braket{0|0} + \bra{0}W_u(\bmm)\ket{0} + \bra{0}W_u^\dagger(\bmm)\ket{0} + \bra{0}W_u^\dagger(\bmm)W_u(\bmm)\ket{0} \right) \\
    &= \frac{1}{2} \left( 1 + \mathrm{Re} \bra{0}W_u(\bmm)\ket{0}
    \right) = \frac{ 1+ \mathrm{Re} \braket{\psi_{\rm obs}|\psi_T(\bmm)} }{2}.
    \end{aligned}
\end{equation*}
Therefore, the inner product of the two quantum states can be estimated as
\[
    2p^u_H(\bmm)-1 
    = \mathrm{Re} \braket{\psi_{\rm obs}|\psi_T(\bmm)} 
    = \frac{ \mathrm{Re} \braket{ \bu_{\rm obs},
    \widetilde{\bu}_T(\bmm)}}{ \lambda(\bmm) \|\bu_0\| \|\bu_{\rm obs}\|}.
\]
The LCHS success probability $p_{\rm succ}(\bmm)$ is given by
\[
    p_{\rm succ}(\bmm) 
    = \left\| \left(\bra{0}^{\otimes a}\otimes I\right)U_{\mathrm{solver}}(\bmm)\ket{0} \right\|^2
    = \frac{ \left\|\widetilde{\bu}_T(\bmm)\right\|^2}{ \lambda(\bmm)^2\left\|\bu_0\right\|^2 }.
\]

Taking their ratio eliminates the LCHS normalization factor and the norm of the initial state,
\[
    \frac{2p_H^u(\bmm)-1}{\sqrt{p_{\rm succ}(\bmm)}}
    =\frac{\operatorname{Re}\langle\bu_{\rm obs},\widetilde{\bu}_T(\bmm)\rangle}{\|\bu_{\rm obs}\|\|\widetilde{\bu}_T(\bmm)\|}.
\]

Consequently, the infinite-shot normalized loss retains exactly the same form as in the general framework,
\[
    \widetilde{\mathcal{L}}_{\rm norm}^{u}(\bmm) = 2 - 2\frac{\operatorname{Re}\langle\bu_{\rm obs},\widetilde{\bu}_T(\bmm)\rangle}{\|\bu_{\rm obs}\|\|\widetilde{\bu}_T(\bmm)\|}  =2-\frac{4p_H^u(\bmm)-2}{\sqrt{p_{\rm succ}(\bmm)}},
\]
whereas the physical loss is reconstructed by retaining the LCU normalization,
\[
    \widetilde{\mathcal{L}}_{\rm phys}(\bmm)
    =\lambda^2(\bmm)\|\bu_0\|^2p_{\rm succ}(\bmm)+\|\bu_{\rm obs}\|^2
    -2\lambda(\bmm)\|\bu_0\|\|\bu_{\rm obs}\|\bigl(2p_H^u(\bmm)-1\bigr).
\]
This example shows explicitly that the loss-evaluation procedure does not depend on the internal Hamiltonian-simulation structure of LCHS once its coherent terminal-state output has been prepared.

The same construction can be extended to the inhomogeneous non-autonomous system
\[
    \frac{\mathrm{d}\bu(t;\bmm)}{\mathrm{d}t}=A(t;\bmm)\bu(t;\bmm)+\boldsymbol{b}(t;\bmm).
\]

In this case, the variation-of-constants formula gives
\[
    \bu_T(\bmm)=\Phi_{\bmm}(T,0)\bu_0+\int_0^T\Phi_{\bmm}(T,s)\boldsymbol{b}(s;\bmm)\,\mathrm{d}s.
\]
Applying the LCHS representation to the propagator $\Phi_{\bmm}(T,s)$ yields
\[
    \bu_T(\bmm)
    =\int_{\mathbb{R}}\frac{U_k(T,0;\bmm)\bu_0}{\pi(1+k^2)}\,\mathrm{d}k
    +\int_0^T\int_{\mathbb{R}}\frac{U_k(T,s;\bmm)\boldsymbol{b}(s;\bmm)}{\pi(1+k^2)}\,\mathrm{d}k\,\mathrm{d}s.
\]

Following \cite{an2023linear}, the second term is discretized with respect to both the LCHS variable $k$ and the time variable $s$, and the resulting source contributions are coherently combined with the initial-value contribution through an additional LCU construction, under the corresponding state-preparation and oracle assumptions for $\boldsymbol{b}(s;\bmm)$. The complete circuit therefore again prepares an approximation to $\bu_T(\bmm)$ in a designated successful branch.

For this inhomogeneous construction, we retain the notation of the general coherent output form and define the effective factor $\lambda(\bmm)$ such that $\lambda(\bmm)\|\bu_0\|$ is the overall sub-normalization factor of the complete LCU procedure, incorporating both the LCHS discretization and the coherent combination of the initial-value and source contributions. In contrast to the homogeneous case, this effective $\lambda(\bmm)$ is therefore generally not given solely by the sum of the LCHS quadrature coefficients. Once the terminal-state output is written in this general coherent form, no modification of the subsequent loss-evaluation procedure is required.

\subsection{Carleman linearization via GQSP}

For the nonlinear case, Carleman linearization embeds a finite-dimensional polynomial nonlinear system into an infinite-dimensional linear system by introducing tensor powers of the original state \cite{liu2021efficient,wu2025quantum}. We focus on the setting in which, after spatial discretization, the nonlinear evolution takes the autonomous quadratic form
\[
    \frac{\mathrm{d}\bu(t;\bmm)}{\mathrm{d}t}
    =F_2(\bmm)\bu^{\otimes2}(t;\bmm)+F_1(\bmm)\bu(t;\bmm)+F_0(\bmm),
    \quad \bu(0;\bmm)=\bu_0,
\]
where $\bu^{\otimes2}=\bu\otimes\bu\in\mathbb{R}^{n_x^2}$, $F_1(\bmm)\in\mathbb{R}^{n_x\times n_x}$, $F_2(\bmm)\in\mathbb{R}^{n_x\times n_x^2}$, and $F_0(\bmm)\in\mathbb{R}^{n_x}$ denote the linear, quadratic, and time-independent forcing components, respectively.

The Carleman hierarchy is generated by the tensor powers of the original state, together with the constant component $1$. For a prescribed truncation order $N$, we retain the tensor levels from $0$ to $N$ and neglect the coupling to tensor powers of order higher than $N$. We denote the resulting truncated Carleman state by $\bY_N(t;\bmm)\in\mathbb{R}^{D_N}, D_N=\sum_{j=0}^{N}n_x^j.$

The constant component incorporates the affine forcing $F_0(\bmm)$ into the lifted linear dynamics. The resulting finite-dimensional truncated system is
\begin{equation*}
    \frac{d\bY_N(t;\bmm)}{dt}  = A(\bmm)\bY_N(t;\bmm), \qquad \bY_N(0;\bmm)=\bY_0,
\end{equation*}
where $\bY_0  =\left( 1, \bu_0, \bu_0^{\otimes2}, \ldots, \bu_0^{\otimes N}\right)^T.$ The truncated matrix $A(\bmm)\in\mathbb{R}^{D_N\times D_N}$ has the block-tridiagonal structure
\[
A(\bmm)=
\begin{pmatrix}
0 & 0 & 0 & \cdots & 0\\
A_1^0(\bmm) & A_1^1(\bmm) & A_1^2(\bmm) & \cdots & 0\\
0 & A_2^1(\bmm) & A_2^2(\bmm) & A_2^3(\bmm) & \cdots\\
\vdots & \ddots & \ddots & \ddots & \vdots\\
0 & \cdots & 0 & A_N^{N-1}(\bmm) & A_N^N(\bmm)
\end{pmatrix}.
\]
The block $A^j_i(\bmm)$ represents the coupling from the $j$-th tensor level to the $i$-th tensor level, where the zeroth tensor is defined as $\bu^{\otimes0}=1$. Its nonzero blocks are given by
\begin{align*}
A_j^{j-1}(\bmm)
&=\sum_{\ell=0}^{j-1}I_{n_x}^{\otimes\ell}\otimes F_0(\bmm)\otimes I_{n_x}^{\otimes(j-1-\ell)},
&&1\leq j\leq N,\\
A_j^{j}(\bmm)
&=\sum_{\ell=0}^{j-1}I_{n_x}^{\otimes\ell}\otimes F_1(\bmm)\otimes I_{n_x}^{\otimes(j-1-\ell)},
&&1\leq j\leq N,\\
A_j^{j+1}(\bmm)
&=\sum_{\ell=0}^{j-1}I_{n_x}^{\otimes\ell}\otimes F_2(\bmm)\otimes I_{n_x}^{\otimes(j-1-\ell)},
&&1\leq j\leq N-1.
\end{align*}
Here, $I_{n_x}$ denotes the ${n_x}\times {n_x}$ identity matrix and $I_{n_x}^{\otimes0}=1$. The first block row vanishes since $\mathrm{d}\by_0/\mathrm{d}t=0$, while the first-order component of the truncated lifted solution provides an approximation to the original nonlinear state.

The truncated nonlinear evolution is therefore reduced to a finite-dimensional linear autonomous system, with terminal lifted solution
\[
    \bY_N(T;\bmm)=e^{TA(\bmm)}\bY_0.
\]

To approximate this terminal evolution, let $\alpha(\bmm)>0$ satisfy $\|A(\bmm)/\alpha(\bmm)\|\leq1$ and define the degree-$n$ normalized Taylor polynomial
\begin{equation}\label{eq:poly_approx}
    P_n\left(\frac{A(\bmm)}{\alpha(\bmm)}\right)
    =\frac{1}{\lambda_{Y}(\bmm)}\sum_{k=0}^{n}\frac{(T\alpha(\bmm))^k}{k!}
    \left(\frac{A(\bmm)}{\alpha(\bmm)}\right)^k
    \approx\frac{1}{\lambda_{Y}(\bmm)}e^{TA(\bmm)},
\end{equation}
where $\lambda_{Y}(\bmm)=\sum_{k=0}^{n}(T\alpha(\bmm))^k/k!$. The corresponding scalar polynomial
satisfies $|P_n(z)|\leq1$ for $|z|\leq1$, making it compatible with GQSP. To implement $P_n(\cdot)$ for the generally non-Hermitian Carleman matrix, we employ the $n$-regular block-encoding construction of \cite{gutierrez2026quantum}.

\subsubsection{Quantum algorithm for the forward solver}
We first introduce the definition of an $n$-regular block-encoding.

\begin{definition}[$n$-regular block-encoding \cite{gutierrez2026quantum}]
An $(a,\epsilon_{\rm BE})$-block-encoding $U_A$ of a normalized square matrix $A$ is called an $n$-regular block-encoding if, for every $0\leq k\leq n$, $U_A^k$ is an $(a,k\epsilon_{\rm BE})$-block-encoding of $A^k$, i.e.,
\[
    \left\|(\bra{0}^{\otimes a}\otimes I)U_A^k
    (\ket{0}^{\otimes a}\otimes I)-A^k\right\|
    \leq k\epsilon_{\rm BE}.
\]
Here, $A$ satisfies $\|A\|\leq1$.
\end{definition}

This regularity prevents unsuccessful branches generated by one application of the block-encoding from returning to the successful subspace under subsequent applications. Following \cite{gutierrez2026quantum}, it can be achieved by introducing an additional counter register. For a degree $n=2^b$, let $Q_n$ denote the $b$-qubit incrementer
\begin{equation}\label{eq:incrementer}
    Q_n\ket{x}=\ket{(x+1)\bmod n},
    \qquad x\in\{0,1,\ldots,n-1\}.
\end{equation}
For a general degree $n$, the logical $n$-cycle can be embedded into
$b=\lceil\log_2n\rceil$ qubits and extended unitarily to the orthogonal complement.
The incrementer is applied whenever the ancillary register of the original
block-encoding is not in its all-zero state, thereby preventing failed branches
from returning to the successful subspace during subsequent applications.

Based on this regularized block-encoding, we employ GQSP \cite{motlagh2024generalized} to implement the polynomial transformation defined in \cref{eq:poly_approx}.

\begin{definition}[Generalized quantum signal processing \cite{motlagh2024generalized}]
Let $P(z)$ be a polynomial of degree at most $n$ satisfying $|P(z)|\leq1$ for all
$z\in\mathbb{T}:=\{z\in\mathbb{C}:|z|=1\}$. Then there exists a sequence of
single-qubit operations $R_0,\ldots,R_n\in SU(2)$ such that
\[
    \bra{0}R_0\widetilde{w}R_1\widetilde{w}\cdots
    \widetilde{w}R_n\ket{0}=P(z),
\]
where $\widetilde{w}=\operatorname{diag}(1,z)$ denotes the scalar signal operator.
\end{definition}

The single-qubit operations $R_0,\ldots,R_n$ can be parameterized by Pauli rotations, with their rotation angles computed using numerical synthesis algorithms \cite{alexis2026infinite,dong2024robust,ying2022stable}. Replacing the scalar signal $z$ by a unitary operator gives the matrix-valued signal $\widetilde{W}(U)=\operatorname{diag}(I,U)$.

The query complexity for an arbitrary square matrix is summarized as follows.

\begin{theorem}[Quantum eigenvalue transformation \cite{gutierrez2026quantum}]
\label{the:QET}
Let $A$ be an arbitrary square matrix satisfying $\|A\|\leq 1$, and let $P(z)$ be a polynomial of degree $n$ satisfying $|P(z)|\leq 1$ for all $z\in\mathbb D:=\{z\in\mathbb C:|z|\le 1\}$. Given an $n$-regular $(a,\epsilon_{\rm BE})$-block-encoding of $A$, the construction produces a block-encoding of $P(A)$ with block-encoding error bounded by $\sqrt{{n^3}/{3}}\,\epsilon_{\rm BE}.$ The circuit requires $\mathcal{O}(\log n)$ additional ancillary qubits, $\mathcal{O}(n\log n)$ elementary operations, and $n$ calls to the block-encoding of $A$.
\end{theorem}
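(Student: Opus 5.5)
The plan is to follow the standard analysis of generalized quantum signal processing applied to a regularized block-encoding, in three steps.

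\textbf{Step 1: the exact case $\epsilon_{\rm BE}=0$.} Write the GQSP sequence $R_0\widetilde{W}(U_A)R_1\widetilde{W}(U_A)\cdots\widetilde{W}(U_A)R_n$, where $\widetilde{W}(U_A)=\ket{0}\!\bra{0}\otimes I+\ket{1}\!\bra{1}\otimes U_A$ and $U_A$ is the $n$-regular block-encoding. Expanding the product, the $\bra{0}\cdot\ket{0}$ entry on the GQSP qubit is a linear combination $\sum_{k=0}^{n}p_k U_A^k$. Here $P(z)=\sum_k p_k z^k$, and the coefficients are the same as in the scalar identity of the GQSP definition, because that identity is purely algebraic in the commuting signal $z$.

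\textbf{Step 2: projecting onto the ancillas.} Project the block-encoding ancilla (together with the incrementer counter register) onto $\ket{0}$. The $n$-regularity, enforced by the incrementer $Q_n$ of \eqref{eq:incrementer}, guarantees that $(\bra{0}^{\otimes a}\otimes I)U_A^k(\ket{0}^{\otimes a}\otimes I)=A^k$ for $k\le n$, with no leakage back from failed branches. The reason is that the counter records how many times a failure occurred and cannot return to zero within fewer than $n$ steps. Hence the top-left block equals $\sum_k p_kA^k=P(A)$.

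Since $|P|\le1$ on the disk $\mathbb D$, in particular on the unit circle $\mathbb T$, the GQSP definition supplies the rotations $R_0,\dots,R_n$. The structural costs follow directly: there are $n$ calls to $U_A$; $\mathcal{O}(\log n)$ extra qubits come from the counter plus the signal qubit; and each controlled incrementer costs $\mathcal{O}(\log n)$ gates, giving $\mathcal{O}(n\log n)$ elementary operations.

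\textbf{Step 3: the error bound, which is the main obstacle.} The error bound requires tracking $\epsilon_{\rm BE}$ through the linear combination. By the regularity definition, the error in the $k$-th term is at most $k\epsilon_{\rm BE}$. A naive triangle inequality would give $\sum_k|p_k|k\epsilon_{\rm BE}$, which can be large.

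Instead, I would write the error operator as $E=\sum_k p_k E_k$ with $\|E_k\|\le k\epsilon_{\rm BE}$. I would then use that the coefficients $p_k$ form the boundary values of a bounded polynomial, so Parseval on $\mathbb T$ gives $\sum_k|p_k|^2\le1$. Cauchy--Schwarz then yields
\[
\|E\|\le\Big(\sum_k|p_k|^2\Big)^{1/2}\Big(\sum_{k=1}^{n}k^2\Big)^{1/2}\epsilon_{\rm BE}\le\sqrt{n(n+1)(2n+1)/6}\,\epsilon_{\rm BE},
\]
which is of order $\sqrt{n^3/3}\,\epsilon_{\rm BE}$.

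Making the constant exactly $\sqrt{n^3/3}$ is the delicate part. The rigorous justification that the error decomposes term by term into the $E_k$ with these bounds is also delicate; I would follow the argument of \cite{gutierrez2026quantum}.
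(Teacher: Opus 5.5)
The paper gives no proof of this theorem; it quotes it from \cite{gutierrez2026quantum}, so there is no in-paper argument to compare with. Your route is the natural one: expand the GQSP product as $P(U_A)=\sum_k p_kU_A^k$, project the ancillas, bound the per-term errors, then apply Parseval and Cauchy--Schwarz. The shape of the constant, $\bigl(\sum_k k^2\bigr)^{1/2}$, suggests the cited source argues the same way.

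The term-by-term decomposition you call delicate is immediate from the paper's definition of $n$-regularity. Set $B_k:=(\bra{0}^{\otimes a}\otimes I)U_A^k(\ket{0}^{\otimes a}\otimes I)$. The projected GQSP block is exactly $\sum_{k=0}^{n}p_kB_k$, so $E_k=B_k-A^k$, and $\|E_k\|\le k\epsilon_{\rm BE}$ is the hypothesis itself.

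If you also want to check that the incrementer delivers this hypothesis: with the counter one has $B_k=B_1^k$ exactly. A path ending in the zero-ancilla state is not counted at its last step, so at most $k-1\le n-1$ increments occur and the mod-$n$ counter cannot wrap back to zero. This, not ``fewer than $n$ steps'', is why $k=n$ is allowed. Then $\|B_1^k-A^k\|\le k\|B_1-A\|$ by telescoping, using $\|B_1\|,\|A\|\le 1$.

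Do not try to sharpen your constant to exactly $\sqrt{n^3/3}$: as an exact bound for a degree-$n$ polynomial it is false for small $n$. For $n=1$, take the $1\times 1$ matrix $A=1$, a block-encoding with $B_1=1-\epsilon_{\rm BE}$ (a rotation), and $P(z)=z$. The error is $\epsilon_{\rm BE}>\epsilon_{\rm BE}/\sqrt{3}$. For $n=2$, keep the same $A$ with the regularized $B_2=(1-\epsilon_{\rm BE})^2$ and $P(z)=z^2$. The error is $2\epsilon_{\rm BE}-\epsilon_{\rm BE}^2$, which exceeds $\sqrt{8/3}\,\epsilon_{\rm BE}$ whenever $\epsilon_{\rm BE}<0.36$.

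Your bound $\sqrt{n(n+1)(2n+1)/6}\,\epsilon_{\rm BE}\le\sqrt{(n+1)^3/3}\,\epsilon_{\rm BE}$ is the correct rigorous outcome of this argument. The stated $\sqrt{n^3/3}$ is only its leading-order term. It is a valid bound only if $n$ counts coefficients rather than the degree, since $\sum_{k=0}^{n-1}k^2\le n^3/3$.

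Read that way, your proposal is a complete proof. One caveat on the resource count: $\mathcal{O}(n\log n)$ elementary operations presumes a linear-size controlled incrementer, and it treats the $\mathcal{O}(a)$-gate zero-check on the block-encoding ancillas as part of the cost of each call to $U_A$.
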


\begin{figure}[ht]
\centering
\begin{quantikz}[row sep=0.35cm, column sep=0.35cm]
& \gate{R_0}& \ctrl{2}& \ctrl{1}& \gate{R_1}& \ctrl{2}& \ctrl{1}&\cdots& \ctrl{2}& \ctrl{1}& \gate{R_n}& \qw \\
& \qw& \gate{Q_n}& \gate{Q_n^\dagger}& \qw& \gate{Q_n}& \gate{Q_n^\dagger}& \cdots& \gate{Q_n}& \gate{Q_n^\dagger}& \qw& \qw \\
& \qw& \gate[2]{U_A}& \octrl{-1}& \qw& \gate[2]{U_A}& \octrl{-1}& \cdots& \gate[2]{U_A}& \octrl{-1}& \qw& \qw \\
& \qw& \qw& \qw& \qw& \qw& \qw& \cdots& \qw& \qw& \qw& \qw
\end{quantikz}
\caption{Circuit for polynomial transformation of a general matrix using an $n$-regular block-encoding and GQSP.}
\label{fig:GQSP}
\end{figure}

Unlike quantum singular value transformation (QSVT) \cite{gilyen2019quantum}, which in general transforms singular values of a block-encoded matrix, the regularized GQSP construction directly implements the matrix polynomial $P(A)$ for an arbitrary square matrix. The complete construction is illustrated in \cref{fig:GQSP}.

We now apply this construction to the normalized Carleman matrix $A(\bmm)/\alpha(\bmm)$. Let the system register contain $s=\lceil\log_2D_N\rceil$ qubits, and suppose that an $(a,\epsilon_{\rm BE})$-block-encoding of $A(\bmm)/\alpha(\bmm)$ is available. Introducing a $b=\lceil\log_2n\rceil$-qubit counter register yields an $n$-regular block-encoding, which we continue to denote by $U_A(\bmm)$.

With one additional GQSP processing qubit, define the signal unitary
\[
    \widetilde{W}(U_A) =\operatorname{diag}\left(I^{\otimes(b+a+s)},U_A(\bmm)\right).
\]
For the given polynomial $P_n(A(\bmm)/\alpha(\bmm))$, the processing operations $R_0,\ldots,R_n\in SU(2)$ can be computed classically using numerical synthesis algorithms \cite{alexis2026infinite,dong2024robust,ying2022stable}. Therefore, the resulting GQSP unitary reads
\[
    U_{\rm GQSP}(\bmm) =\bigl(R_0\otimes I\bigr)\widetilde{W}(U_A) \bigl(R_1\otimes I\bigr)\widetilde{W}(U_A)\cdots \widetilde{W}(U_A)\bigl(R_n\otimes I\bigr).
\]

Define the initial-state preparation oracle $O_{\rm prep}^Y:\ket{0}\rightarrow \ket{\bY_0}$ for the Carleman linearization system, where $\ket{\bY_0} =\sum_{\ell=0}^{D_N-1}\bY_{0,\ell}\ket{\ell} /\|\bY_0\|$ is the normalized amplitude encoding of the initial lifted vector $\bY_0$. Let $\ket{\psi_0} =\ket{0}\ket{0}^{\otimes b}\ket{0}^{\otimes a} \otimes\ket{\bY_0}$ denote the input state of the full circuit. Applying the GQSP circuit under the ideal block-encoding assumption gives
\begin{equation}\label{eq:car_state}
\begin{aligned}
    \ket{\psi_T^Y(\bmm)} &:= U^Y_{\rm solver}\ket{0} =U_{\rm GQSP}(\bmm)\ket{\psi_0} \\
    &=\ket{0}\ket{0}^{\otimes b}\ket{0}^{\otimes a} \otimes P_n\left(\frac{A(\bmm)}{\alpha(\bmm)}\right)\ket{\bY_0} +\ket{\Phi_T^{Y,\perp}(\bmm)}.
\end{aligned}
\end{equation}
Here, $\ket{\Phi_T^{Y,\perp}(\bmm)}$ is orthogonal to the successful subspace in which the processing qubit, counter register, and block-encoding ancillas are all in the zero state.

To connect this output with the general coherent-output representation introduced above, define the approximated vector $\widetilde{\bY}_{N,T}(\bmm):=\lambda_{Y}(\bmm)\, P_n\left(\frac{A(\bmm)}{\alpha(\bmm)}\right)\bY_0.$ Then \cref{eq:car_state} can equivalently be written as
\[
    \ket{\psi_T^Y(\bmm)}
    =\frac{\|\widetilde{\bY}_{N,T}(\bmm)\|}
    {\lambda_{Y}(\bmm)\|\bY_0\|}
    \ket{0}\ket{0}^{\otimes b}\ket{0}^{\otimes a}
    \ket{\widetilde{\bY}_{N,T}(\bmm)}
    +\ket{\Phi_T^{Y,\perp}(\bmm)},
\]
where $\ket{\widetilde{\bY}_{N,T}(\bmm)}$ denotes the normalized amplitude encoding of $\widetilde{\bY}_{N,T}(\bmm)$. Thus, for this GQSP realization, the normalization factor in the general lifted-state output is given by $\lambda_{Y}(\bmm)$. The corresponding successful lifted-state probability is
\[
    p_{\rm succ}^{Y}(\bmm)
    =\left\|P_n\left(\frac{A(\bmm)}{\alpha(\bmm)}\right)\ket{\bY_0}\right\|^2
    =\frac{\|\widetilde{\bY}_{N,T}(\bmm)\|^2}
    {\lambda_{Y}(\bmm)^2\|\bY_0\|^2}.
\]
This coherent output can therefore be used directly in the normalized lifted loss-evaluation procedures with an additional Hadamard test circuit.

\subsubsection{Loss reconstruction from the GQSP output}

We next specialize the three loss functions to the GQSP construction above. For the normalized loss in the full Carleman space, the lifted observation vector is defined as 
\[
\bY_{\rm obs} =\left(1,\bu_{\rm obs},\bu_{\rm obs}^{\otimes2},\dots,\bu_{\rm obs}^{\otimes N}\right)^T \in\mathbb{R}^{D_N}.
\]
 The corresponding normalized observation state for $\bY_{\rm obs}$ in the full computational space is encoded by
\[
    \ket{\psi_{\rm obs}^Y} :=
    \ket{0}\ket{0}^{\otimes b}\ket{0}^{\otimes a}\otimes \left( \frac{1}{\|\bY_{\rm obs}\|} \sum_{\ell=0}^{D_N-1}Y_{{\rm obs},\ell+1}\ket{\ell} \right) =O_{\rm obs}^Y\ket{0}.
\]

Define the unitary operator $W_Y(\bmm)=(O_{\rm obs}^Y)^\dagger U_{\rm solver}^Y(\bmm)$. The Hadamard test follows the same procedure as in \cref{eq:hadamard}. The target real part of the inner product is then given by
\[
    2p_H^Y(\bmm)-1 =\mathrm{Re}\braket{\psi_{\rm obs}^Y|\psi_T^Y(\bmm)}
    =\frac{\mathrm{Re}\langle\bY_{\rm obs},\widetilde{\bY}_{N,T}(\bmm)\rangle}{\lambda_Y(\bmm)\|\bY_{0}\|\|\bY_{\rm obs}\|}.
\]
The corresponding GQSP success probability is
\[
    p_{\rm succ}^{Y}(\bmm)
    =\left\|P_n\left(\frac{A(\bmm)}{\alpha(\bmm)}\right)\ket{\bY_0}\right\|^2
    =\frac{\|\widetilde{\bY}_{N,T}(\bmm)\|^2}{\lambda_{Y}(\bmm)^2\|\bY_0\|^2}.
\]
After eliminating the normalization factor by combining these two quantities, the infinite-shot normalized loss in the lifted Carleman space is
\[
    \widetilde{\mathcal{L}}_{\rm norm}^Y(\bmm) = 2 - 2\frac{\operatorname{Re}\langle\bY_{\rm obs},\widetilde{\bY}_{N,T}(\bmm)\rangle}{\|\bY_{\rm obs}\|\|\widetilde{\bY}_{N,T}(\bmm)\|}
    =2-\frac{4p_H^Y(\bmm)-2}{\sqrt{p_{\rm succ}^{Y}(\bmm)}}.
\]

The same GQSP output can be used to evaluate losses defined on the original physical variable. At the classical level, let $\Pi_1:\mathbb{R}^{D_N}\rightarrow\mathbb{R}^{n_x}$
denote the extraction operator for the first-order Carleman component. Then $\bu_T(\bmm)=\Pi_1\bY_N(T;\bmm)$ gives the order-$N$ Carleman approximation to the exact terminal solution $\bu_T(\bmm)$. The terminal physical vector associated with the polynomial approximation is defined by $\widetilde{\bu}_T(\bmm) = \Pi_1\widetilde{\bY}_{N,T}(\bmm).$ In the quantum implementation, this physical component is identified coherently using a flag register $F$. Since the Carleman basis is ordered by tensor degree, with the constant component followed by the first-order component, the physical block corresponds to the system indices $1,\ldots,n_x$. We therefore introduce the reversible index-checking operation
\begin{equation*}
    O_u\ket{\ell}\ket{0}_F=
\begin{cases}
    \ket{\ell}\ket{1}_F, & \ell=0,\ldots,{n_x}-1,\\
    \ket{\ell}\ket{0}_F, & \mathrm{otherwise}.
\end{cases}
\end{equation*}
Applying $O_u$ to the GQSP output with the flag initialized in $\ket{0}_F$ gives the flagged terminal state $\ket{\psi_T^u(\bmm)} =O_u\left(\ket{\psi_T^Y(\bmm)}\otimes\ket{0}_F\right).$

To evaluate the physical-space overlap, we use the embedded observation vector $\bY_{\rm obs}^u =\left(0,\bu_{\rm obs},0,\dots,0\right)^T\in\mathbb{R}^{D_N}$. The corresponding observation state is prepared with the flag register in $\ket{1}_F$ and all GQSP ancillary registers in their successful zero states. Consequently, only the flagged physical component of the GQSP output contributes to the Hadamard test overlap, giving
\[
    2p_H^u(\bmm)-1 =\frac{\operatorname{Re}\langle\bu_{\rm obs},\widetilde{\bu}_T(\bmm)\rangle}
    {\lambda_{Y}(\bmm)\|\bY_0\|\|\bu_{\rm obs}\|}.
\]
The corresponding projected success probability is obtained by measuring the flag register in $\ket{1}_F$ while the GQSP ancillary registers are simultaneously measured in their successful all-zero state. It satisfies
\[
    p_{\rm succ}(\bmm)
    =\left\|\Pi_1P_n\left(\frac{A(\bmm)}{\alpha(\bmm)}\right)\ket{\bY_0}\right\|^2
    =\frac{\|\widetilde{\bu}_T(\bmm)\|^2}{\lambda_{Y}(\bmm)^2\|\bY_0\|^2}.
\]
Here, $\Pi_1$ describes the corresponding first-order component at the classical vector level, whereas the quantum implementation of this selection is realized by $O_u$ and the subsequent measurement of the flag register.

These two measurements lead directly to the normalized loss function
\[
    \widetilde{\mathcal{L}}_{\rm norm}^u(\bmm)
    =2-2\frac{\operatorname{Re}\langle\bu_{\rm obs},\widetilde{\bu}_T(\bmm)\rangle}
    {\|\bu_{\rm obs}\|\|\widetilde{\bu}_T(\bmm)\|}
    =2-\frac{4p_H^u(\bmm)-2}{\sqrt{p_{\rm succ}(\bmm)}}.
\]

Finally, the same quantities $p_H^u(\bmm)$ and $p_{\rm succ}(\bmm)$ can be used to reconstruct the physical loss. In particular, the physical norm can therefore be reconstructed as
\[
    \|\widetilde{\bu}_T(\bmm)\|^2 = \lambda_{Y}(\bmm)^2\|\bY_0\|^2p_{\rm succ}(\bmm),
\]
and the real part of the inner product is obtained from the Hadamard test statistics:
\[
    \operatorname{Re}\langle\bu_{\rm obs},\widetilde{\bu}_T(\bmm)\rangle
    =\lambda_{Y}(\bmm)\|\bY_0\|\|\bu_{\rm obs}\|\bigl(2p_H^u(\bmm)-1\bigr).
\]
Substituting these relations into $\widetilde{\mathcal{L}}_{\rm phys}(\bmm) =\|\widetilde{\bu}_T(\bmm)-\bu_{\rm obs}\|^2$ gives
\[
    \widetilde{\mathcal{L}}_{\rm phys}(\bmm)
    =\lambda_{Y}(\bmm)^2\|\bY_0\|^2p_{\rm succ}(\bmm)+\|\bu_{\rm obs}\|^2
    -2\lambda_{Y}(\bmm)\|\bY_0\|\|\bu_{\rm obs}\|
    \bigl(2p_H^u(\bmm)-1\bigr).
\]

Thus, the lifted normalized loss is evaluated directly from the successful GQSP branch, while the normalized loss and the physical loss additionally use one flag qubit to identify the first-order Carleman component. The former depends on $p_H^Y(\bmm)$ and $p_{\rm succ}^{Y}(\bmm)$, whereas the latter two are reconstructed from $p_H^u(\bmm)$ and $p_{\rm succ}(\bmm)$.

\section{Detailed results for the 1-D forced viscous Burgers equation}
In this section, we provide detailed Bayesian optimization results for the 1-D forced viscous Burgers equation to complement the inversion results presented in the main text. Specifically, we report the corresponding forward solutions, optimization trajectories, and the objective functions obtained with different loss functions. The results are based on the infinite-shot baseline to characterize the performance of the proposed framework without statistical sampling errors. 

We briefly review the problem setting as follows. The governing equation is
\[
    \partial_t u + u\partial_x u = \nu \partial^2_x u  + f, \quad (x,t)\in[-L_0/2,L_0/2]\times[0,T].
\]
The initial condition $u(x,0)=U_0\sin(2\pi x/L_0)$ is imposed on $[-L_0/2,L_0/2]$ with $L_0=1$, together with homogeneous Dirichlet boundary conditions $u(-L_0/2,t)=u(L_0/2,t)=0$. We choose the forcing term $f(x)=U_0\exp\left(-\frac{(x-L_0/4)^2}{2(L_0/32)^2}\right)$, where $U_0=\frac{1}{\sqrt{{n_x}-1}}$ and $n_x=16$. The viscosity is parameterized as $\nu=U_0L_0/\mathrm{Re}$, where $\mathrm{Re}$ denotes the Reynolds number. Three cases with different Reynolds numbers and final times are considered in this experiment. 

We benchmark the performance of Bayesian optimization (BayesOpt) using different loss functions. At this stage, we neglect Hadamard-test sampling error and present the detailed BayesOpt trajectories obtained with corresponding loss functions. The three loss functions can be written as follows:
\begin{enumerate}

\item[(1)] Physical loss $\mathcal{L}_{\textup{phys}}(\bmm)$: 
\[
\mathcal{L}_{\textup{phys}}(\bmm) = \|\bu_T(\bmm)\|^2 + \|\bm{u}_{\rm obs}\|^2 - 2 \mathrm{Re}\braket{\bm{u}_{\rm obs},\bu_T(\bmm)}.
\]
\item[(2)] Normalized lifted loss $\mathcal{L}_{\rm norm}^{Y}(\bmm)$:
\[
    \mathcal{L}^{Y}_{\mathrm{norm}}(\bmm) = \left\|\frac{\bY_T(\bmm)}{\|{\bY_T(\bmm)}\|} -\frac{\bY_{\mathrm{obs}}}{\|{\bY_{\mathrm{obs}}}\|}\right\|^2 = 2 - 2 \frac{\mathrm{Re}\braket{\bY_T(\bmm),\bY_{\mathrm{obs}}}}  {\|{\bY_T(\bmm)}\|\|{\bY_{\mathrm{obs}}}\|}.
\]
\item[(3)] Normalized loss $\mathcal{L}_{\rm norm}^{u}(\bmm)$:
\[
    \mathcal{L}^{u}_{\mathrm{norm}}(\bmm) = \left\|\frac{\bu_T(\bmm)}{\|\bu_T(\bmm)\|}-
    \frac{\bu_{\mathrm{obs}}}{\|\bu_{\mathrm{obs}}\|}\right\|^2
    = 2-2 \frac{\mathrm{Re}\braket{\bu_{\mathrm{obs}},\bu_T(\bmm)}}{\|\bu_T(\bmm)\|\|\bu_{\mathrm{obs}}\|}.
\]
\end{enumerate}
The objective function $f(\bmm)$ is defined based on the corresponding loss function: 
\[
    f(\bmm) = \exp\left( -\frac{\mathcal{L}(\bmm)}{n_{\rm obs}\cdot\gamma}  \right),
\]
where $\gamma=1/100$ is a scaling factor, and $n_{\rm obs}$ denotes the dimension of the corresponding observation vector, which depends on the selected loss function. Specifically, $n_{\rm obs}=n_x=16$ for $\mathcal{L}_{\mathrm{phys}}(\bmm)$ and $\mathcal{L}_{\mathrm{norm}}^{u}(\bmm)$, while $n_{\rm obs}=\sum_{j=0}^{N}n_x^j$ for $\mathcal{L}_{\mathrm{norm}}^{Y}(\bmm)$ with an $N$-order Carleman truncation.

\begin{figure}[ht]
    \centering
    \subfloat[Case I with $\mathrm{Re}=18, T=1$.]
    {\includegraphics[width=\linewidth,height=0.3\linewidth]{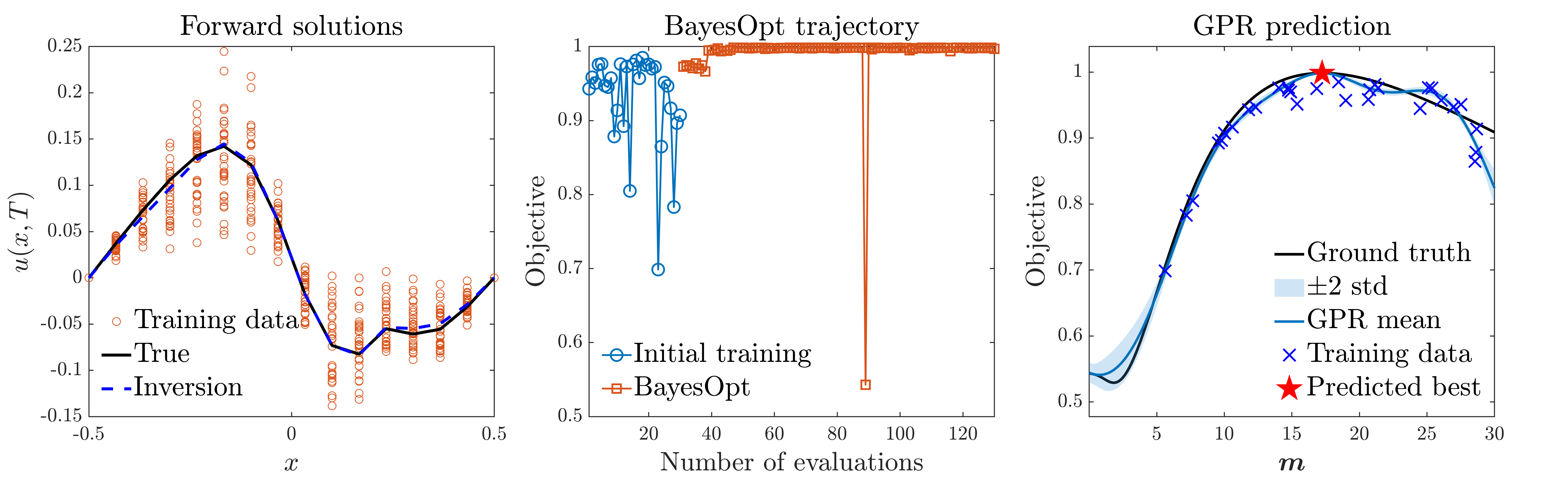}}\\
    \subfloat[Case II with $\mathrm{Re}=14, T=2$.]
    {\includegraphics[width=\linewidth,height=0.3\linewidth]{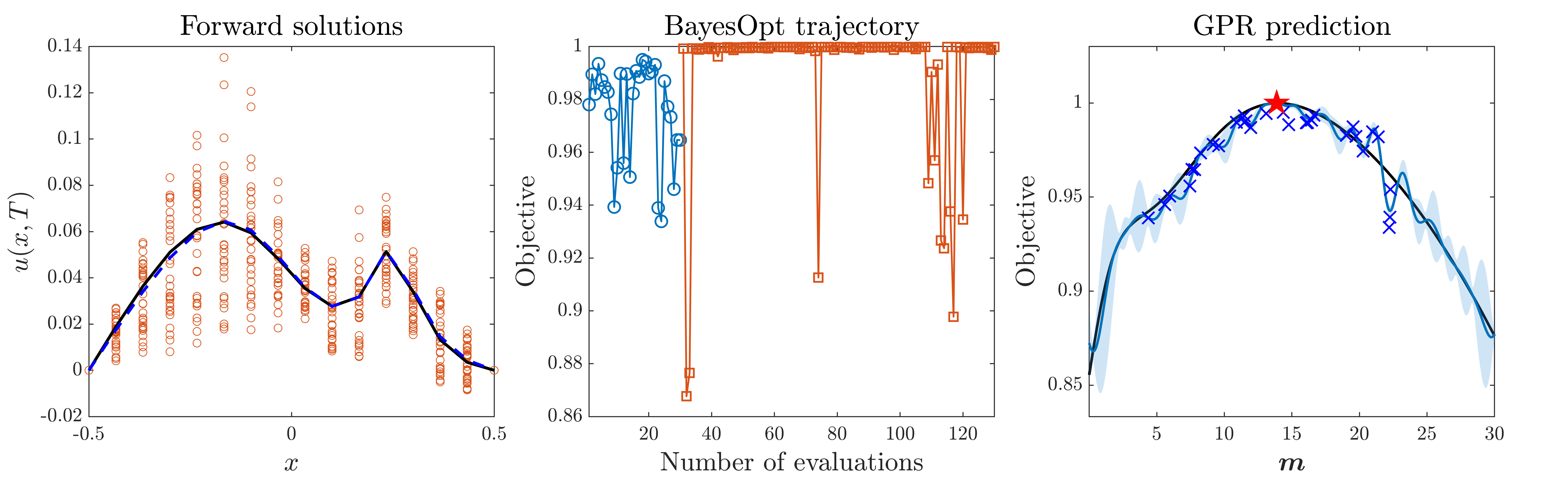}}\\
    \subfloat[Case III with $\mathrm{Re}=10, T=3$.]
    {\includegraphics[width=\linewidth,height=0.3\linewidth]{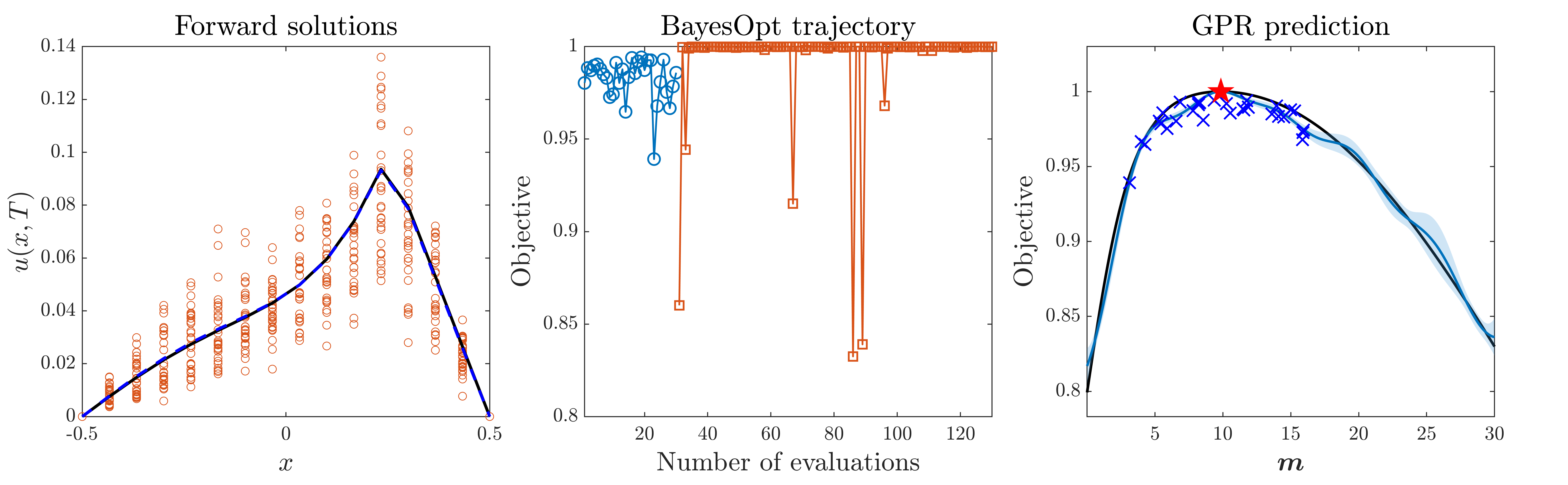}}\\
    \caption{The inversion results for the Burgers equation using the physical loss $\mathcal{L}_{\rm phys}(\bmm)$. }
   \label{fig:Burger_phys_loss}
\end{figure}

\begin{table}[ht]
    \centering
    \footnotesize
    \caption{Parameter-inversion results for the forced viscous Burgers equation: GPR-predicted parameters $\bmm_{\rm opt} = \arg\max_{\bmm} \mathbb{E}[f(\bmm) \mid \mathcal{D}]$ and the corresponding relative $L^2$ errors.}
    \resizebox{\linewidth}{!}{
    \begin{tabular}{cccccccc}
        \toprule
        \multirow{2}{*}{Case} & \multirow{2}{*}{True $\bmm^*$} & \multicolumn{3}{c}{Estimated $\bmm_{\rm opt}$} & \multicolumn{3}{c}{Relative $L^2$ error} \\
        \cmidrule(lr){3-5}\cmidrule(lr){6-8}
        & & $\mathcal{L}_{\textup{phys}}(\bmm)$ & $\mathcal{L}_{\rm norm}^{Y}(\bmm)$ & $\mathcal{L}_{\rm norm}^{u}(\bmm)$ & $\mathcal{L}_{\textup{phys}}(\bmm)$ & $\mathcal{L}_{\rm norm}^{Y}(\bmm)$ & $\mathcal{L}_{\rm norm}^{u}(\bmm)$ \\
        \midrule
        I   & 18 & 17.24 & 17.24 & 17.81 & $5.02\times10^{-2}$ & $5.02\times10^{-2}$ & $5.48\times10^{-2}$ \\
        II  & 14 & 13.87 & 13.90 & 14.02 & $2.91\times10^{-2}$ & $2.87\times10^{-2}$ & $2.85\times10^{-2}$ \\
        III & 10 & 9.85  & 9.76  & 10.90 & $8.66\times10^{-3}$ & $9.18\times10^{-3}$ & $5.02\times10^{-2}$ \\
        \bottomrule
    \end{tabular}}
    \label{tab:Burger_results_2}
\end{table}

\begin{figure}[H]
    \centering
    \subfloat[Case I with $\mathrm{Re}=18, T=1$.]
    {\includegraphics[width=\linewidth,height=0.3\linewidth]{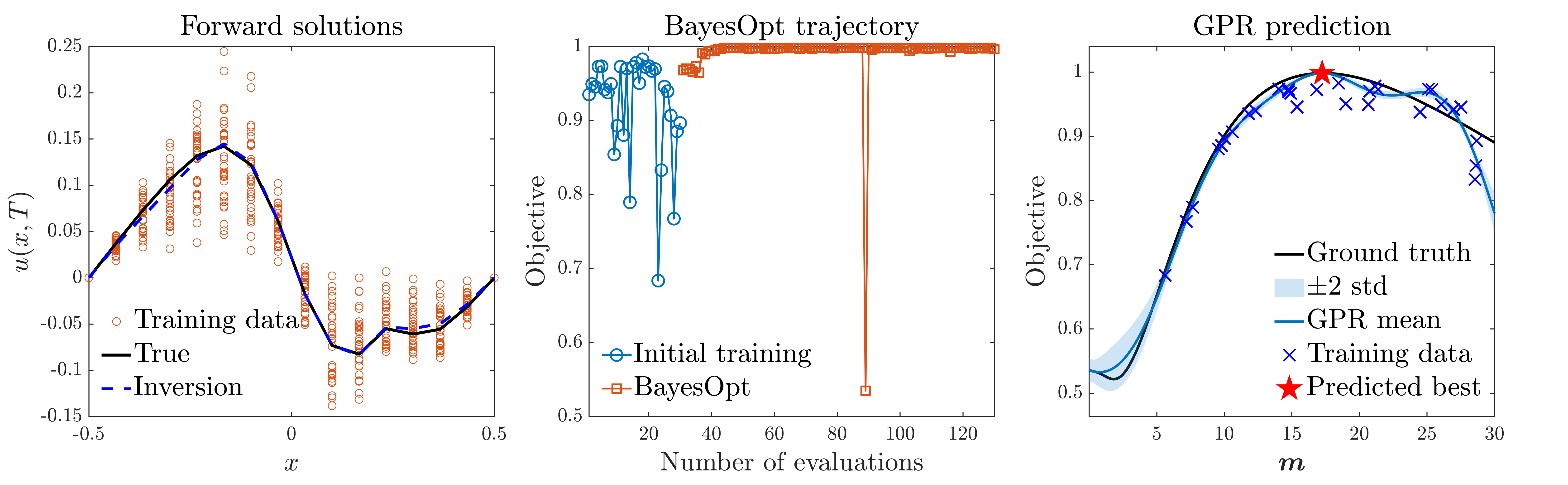}}\\
    \subfloat[Case II with $\mathrm{Re}=14, T=2$.]
    {\includegraphics[width=\linewidth,height=0.3\linewidth]{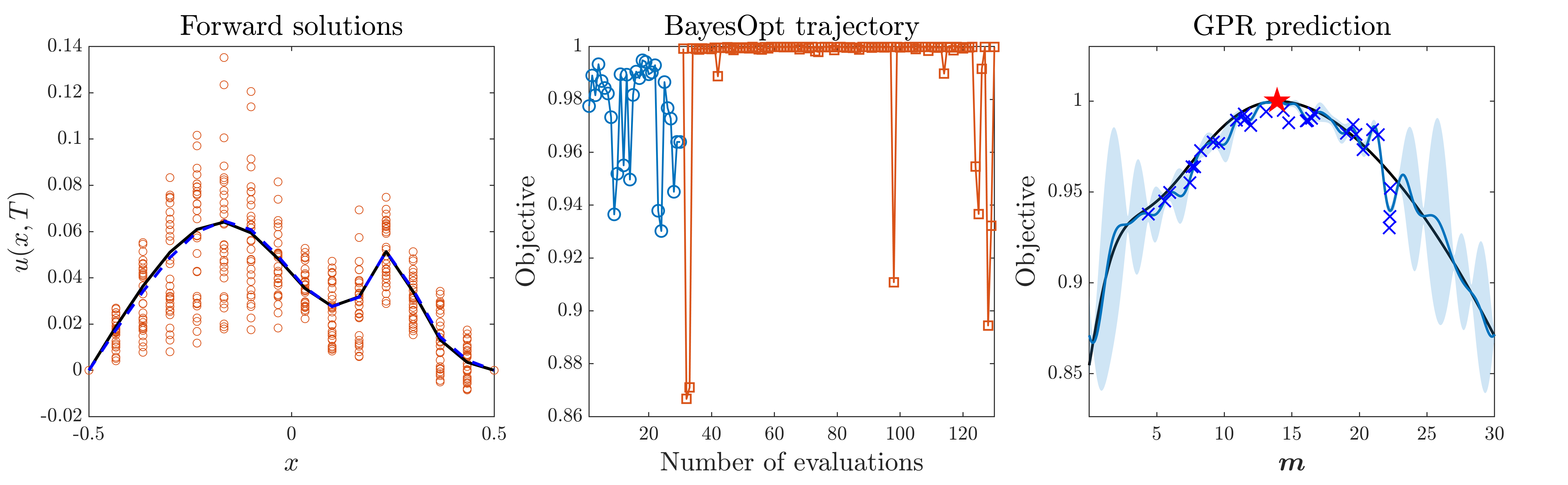}}\\
    \subfloat[Case III with $\mathrm{Re}=10, T=3$.]
    {\includegraphics[width=\linewidth,height=0.3\linewidth]{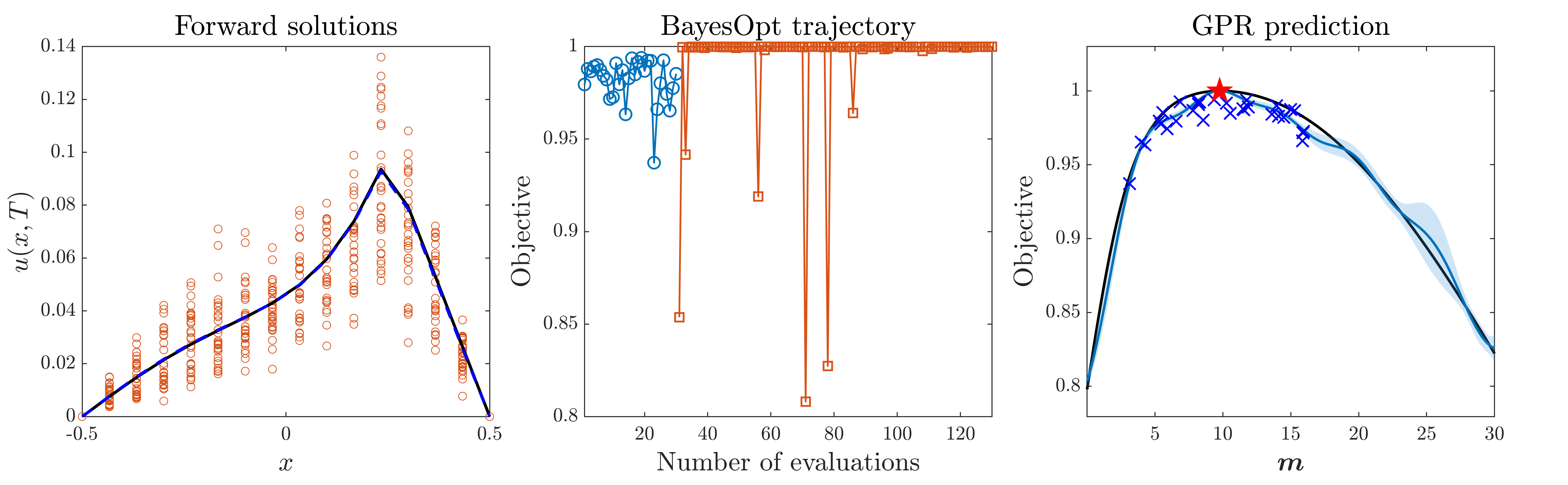}}\\
    \caption{The inversion results for the Burgers equation using the normalized lifted loss $\mathcal{L}^{Y}_{\rm norm}(\bmm)$. }
    \label{fig:Burger_car_loss}
\end{figure}

\begin{figure}[ht]
    \centering
    \subfloat[Case I with $\mathrm{Re}=18, T=1$.]
    {\includegraphics[width=\linewidth,height=0.3\linewidth]{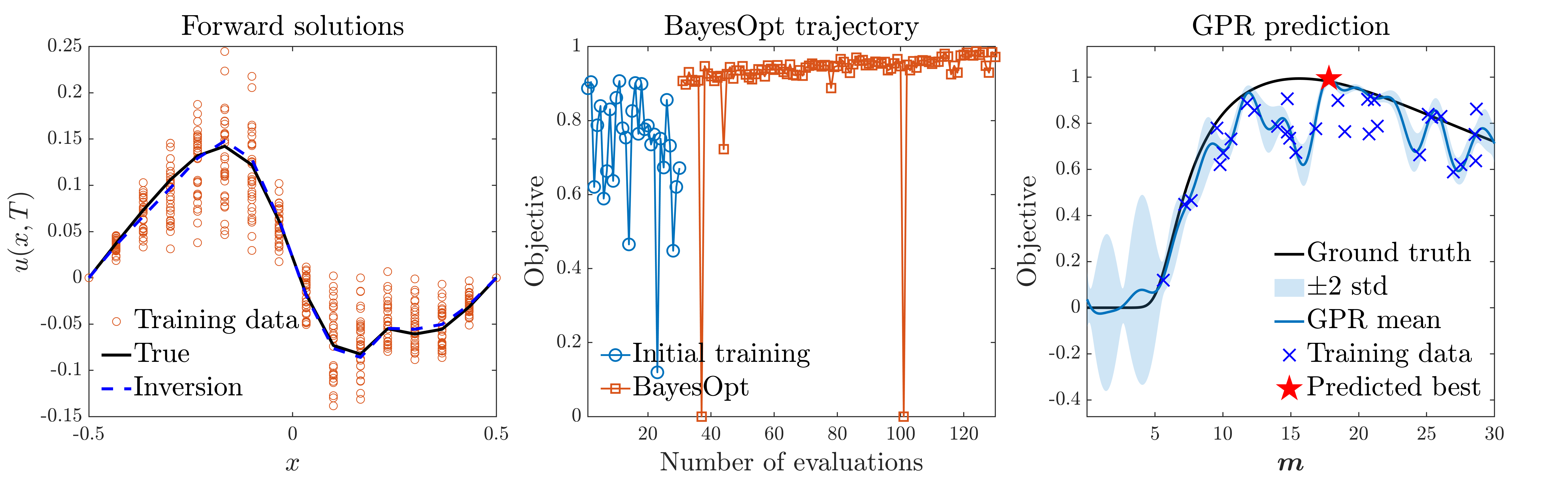}}\\
    \subfloat[Case II with $\mathrm{Re}=14, T=2$.]
    {\includegraphics[width=\linewidth,height=0.3\linewidth]{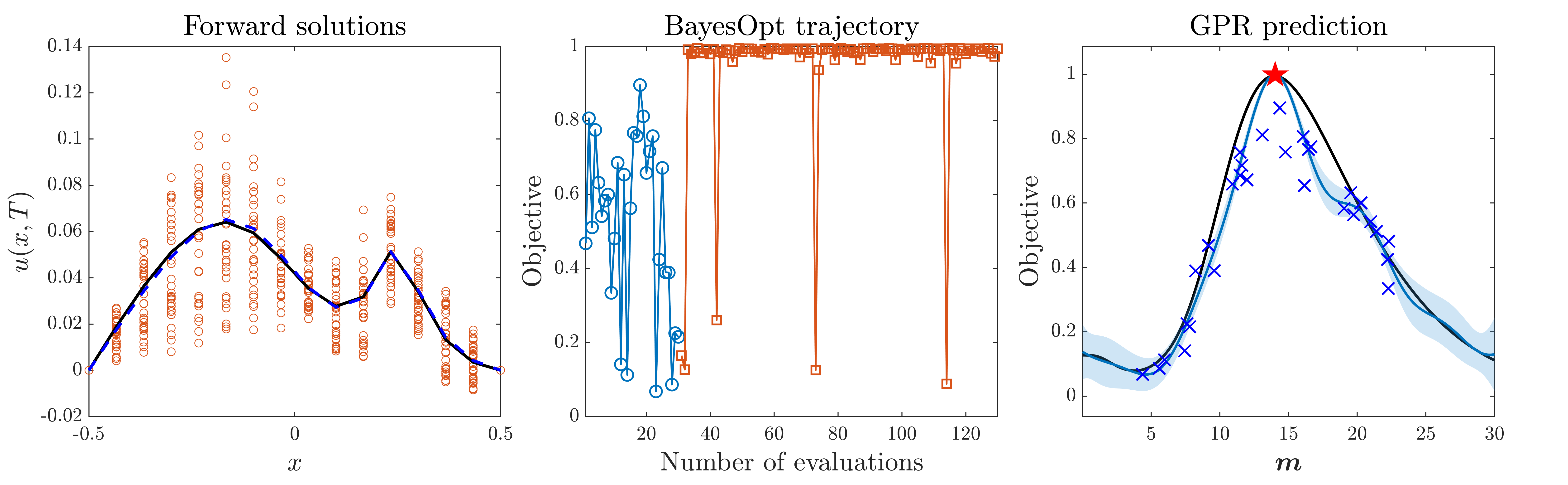}}\\
    \subfloat[Case III with $\mathrm{Re}=10, T=3$.]
    {\includegraphics[width=\linewidth,height=0.3\linewidth]{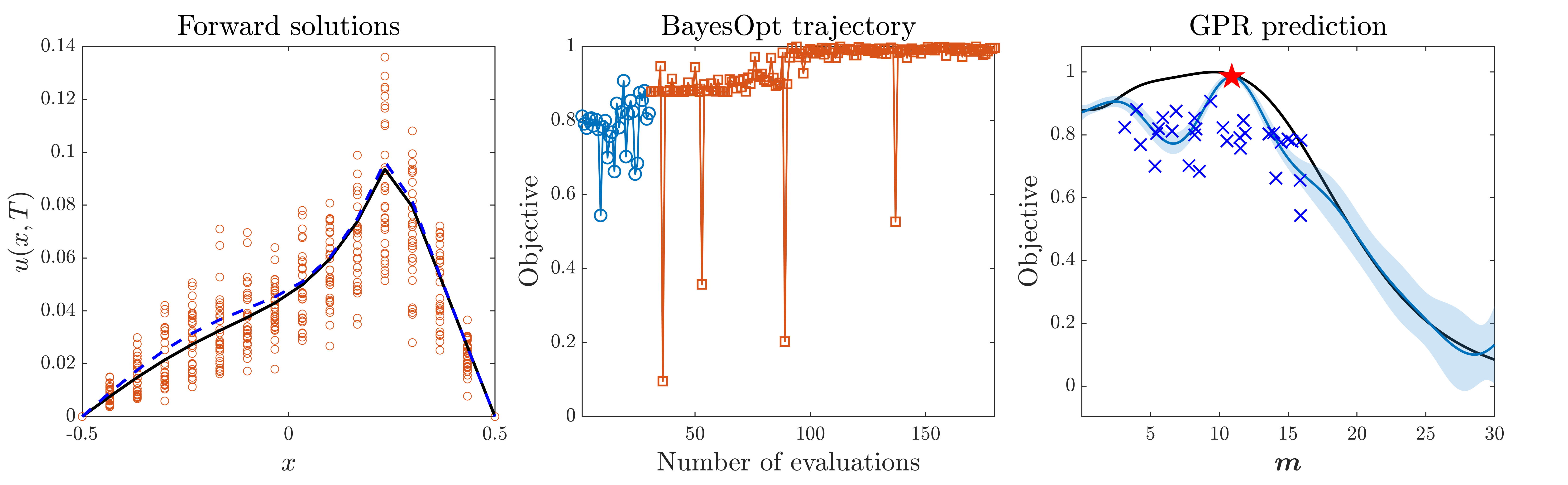}}\\
    \caption{The inversion results for the Burgers equation using the normalized loss $\mathcal{L}^{u}_{\rm norm}(\bmm)$. }
    \label{fig:Burger_norm_loss}
\end{figure}

The estimated parameters $\bmm_{\rm opt}=\arg\max_{\bmm}\mathbb{E}[f(\bmm)\mid\mathcal{D}]$, obtained from the trained Gaussian process regression (GPR) surrogate, and the corresponding relative $L^2$ errors between the inversion solutions and the true solutions are reported in \cref{tab:Burger_results_2}. Moreover, \cref{fig:Burger_phys_loss,fig:Burger_norm_loss,fig:Burger_car_loss} present the exact baseline results for the three loss functions, with panels (a)–(c) corresponding to Cases I–III, respectively. For each case, the left panel compares the ground truth solution corresponding to $\bmm^*$ with the inversion solution obtained using $\bmm_{\mathrm{opt}}$, the middle panel shows the detailed optimization trajectory, and the right panel presents the exact objective function together with the GPR surrogate. In each case, the BayesOpt procedure exhibits convergence, and the prediction closely approximates the exact objective function in a neighborhood of $\bmm^*$. Although the objective values associated with different parameters are more compressed under the normalized lifted loss, its optimization performance remains comparable to that of the physical loss. These results provide numerical support for extending the proposed framework to nonlinear problems through Carleman linearization and quantum-state-based normalized loss functions.

\bibliographystyle{siamplain}

\begin{thebibliography}{10}

\bibitem{al2011computing}
{\sc A.~H. Al-Mohy and N.~J. Higham}, {\em Computing the action of the matrix
  exponential, with an application to exponential integrators}, SIAM J. Sci.
  Comput., 33 (2011), pp.~488--511.

\bibitem{alexanderian2021optimal}
{\sc A.~Alexanderian}, {\em Optimal experimental design for
  infinite-dimensional {Bayesian} inverse problems governed by pdes: A review},
  Inverse Probl., 37 (2021), p.~043001.

\bibitem{alexanderian2014optimal}
{\sc A.~Alexanderian, N.~Petra, G.~Stadler, and O.~Ghattas}, {\em A-optimal
  design of experiments for infinite-dimensional {Bayesian} linear inverse
  problems with regularized $\ell_0$-sparsification}, SIAM J. Sci. Comput., 36
  (2014), pp.~A2122--A2148.

\bibitem{alexis2026infinite}
{\sc M.~Alexis, L.~Lin, G.~Mnatsakanyan, C.~Thiele, and J.~Wang}, {\em Infinite
  quantum signal processing for arbitrary {Szeg{\H{o}}} functions}, Commun.
  Pure Appl. Math., 79 (2026), pp.~123--174.

\bibitem{AlipanahZhangYao2025}
{\sc H.~Alipanah, F.~Zhang, Y.-X. Yao, et~al.}, {\em Quantum dynamics
  simulation of the advection-diffusion equation}, Phys. Rev. Res., 7 (2025),
  p.~043318.

\bibitem{AlkadriKharaziWhaleyMandadapu2025}
{\sc A.~M. Alkadri, T.~D. Kharazi, K.~B. Whaley, and K.~K. Mandadapu}, {\em A
  quantum algorithm for the finite element method}, arXiv preprint
  arXiv:2510.18150,  (2025).

\bibitem{an2023linear}
{\sc D.~An, J.-P. Liu, and L.~Lin}, {\em Linear combination of {Hamiltonian}
  simulation for nonunitary dynamics with optimal state preparation cost},
  Phys. Rev. Lett., 131 (2023), p.~150603.

\bibitem{an2025quantum}
{\sc D.~An, J.-P. Liu, D.~Wang, and Q.~Zhao}, {\em Quantum differential
  equation solvers: Limitations and fast-forwarding}, Commun. Math. Phys., 406
  (2025), p.~189.

\bibitem{arute2019quantum}
{\sc F.~Arute, K.~Arya, R.~Babbush, D.~Bacon, J.~C. Bardin, R.~Barends,
  R.~Biswas, S.~Boixo, F.~G. S.~L. Brandao, D.~A. Buell, et~al.}, {\em Quantum
  supremacy using a programmable superconducting processor}, Nature, 574
  (2019), pp.~505--510.

\bibitem{bergholm2018pennylane}
{\sc V.~Bergholm, J.~Izaac, M.~Schuld, C.~Gogolin, S.~Ahmed, V.~Ajith, M.~S.
  Alam, G.~Alonso-Linaje, B.~AkashNarayanan, A.~Asadi, et~al.}, {\em
  {PennyLane}: Automatic differentiation of hybrid quantum--classical
  computations}, arXiv preprint arXiv:1811.04968,  (2018).

\bibitem{Berry2014HighOrder}
{\sc D.~W. Berry}, {\em High-order quantum algorithm for solving linear
  differential equations}, Journal of Physics A: Mathematical and Theoretical,
  47 (2014), p.~105301.

\bibitem{berry2015simulating}
{\sc D.~W. Berry, A.~M. Childs, R.~Cleve, R.~Kothari, and R.~D. Somma}, {\em
  Simulating {Hamiltonian} dynamics with a truncated {Taylor} series}, Phys.
  Rev. Lett., 114 (2015), p.~090502.

\bibitem{berry2017quantum}
{\sc D.~W. Berry, A.~M. Childs, A.~Ostrander, and G.~Wang}, {\em Quantum
  algorithm for linear differential equations with exponentially improved
  dependence on precision}, Commun. Math. Phys., 356 (2017), pp.~1057--1081.

\bibitem{bosch2025quantum}
{\sc C.~B{\"o}sch, M.~Schade, G.~Aloisi, S.~D. Keating, and A.~Fichtner}, {\em
  Quantum wave simulation with sources and loss functions}, Phys. Rev. Res.‌,
  7 (2025), p.~033225.

\bibitem{chen2026timedependenthamiltoniansimulationoptimal}
{\sc B.~Chen, M.~Gao, X.~Wang, and S.~Zhou}, {\em Time-dependent hamiltonian
  simulation with optimal query complexity}, 2026,
  \url{https://arxiv.org/abs/2608.06094}.

\bibitem{chiappetta2026efficient}
{\sc M.~Chiappetta, M.~Carraturo, A.~Ra{\ss}loff, M.~K{\"a}stner, and
  F.~Auricchio}, {\em An efficient {Bayesian} framework for inverse problems
  via optimization and inversion: Surrogate modeling, parameter inference, and
  uncertainty quantification}, arXiv preprint arXiv:2602.04537,  (2026).

\bibitem{childs2017quantum}
{\sc A.~M. Childs, R.~Kothari, and R.~D. Somma}, {\em Quantum algorithm for
  systems of linear equations with exponentially improved dependence on
  precision}, SIAM J. Comput., 46 (2017), pp.~1920--1950.

\bibitem{childs2021high}
{\sc A.~M. Childs, J.-P. Liu, and A.~Ostrander}, {\em High-precision quantum
  algorithms for partial differential equations}, Quantum, 5 (2021), p.~574.

\bibitem{divincenzo1995quantum}
{\sc D.~P. DiVincenzo}, {\em Quantum computation}, Science, 270 (1995),
  pp.~255--261.

\bibitem{DongLiXue2025}
{\sc D.~Dong, Y.~Li, and J.~Xue}, {\em A quantum algorithm for linear
  autonomous differential equations via {Padé} approximation}, Quantum, 9
  (2025), p.~1770.

\bibitem{dong2024robust}
{\sc Y.~Dong, L.~Lin, H.~Ni, and J.~Wang}, {\em Robust iterative method for
  symmetric quantum signal processing in all parameter regimes}, SIAM J. Sci.
  Comput., 46 (2024), pp.~A2951--A2971.

\bibitem{feynman2018simulating}
{\sc R.~P. Feynman}, {\em Simulating physics with computers}, in Feynman and
  Computation, CRC Press, 2018, pp.~133--153.

\bibitem{frazier2018bayesian}
{\sc P.~I. Frazier}, {\em {Bayesian} optimization}, in Recent Advances in
  Optimization and Modeling of Contemporary Problems, INFORMS, 2018,
  pp.~255--278.

\bibitem{garnett2023bayesian}
{\sc R.~Garnett}, {\em {Bayesian} Optimization}, Cambridge University Press,
  2023.

\bibitem{gilyen2019quantum}
{\sc A.~Gily{\'e}n, Y.~Su, G.~H. Low, and N.~Wiebe}, {\em Quantum singular
  value transformation and beyond: Exponential improvements for quantum matrix
  arithmetics}, in Proceedings of the 51st Annual {ACM} {SIGACT} Symposium on
  Theory of Computing, 2019, pp.~193--204.

\bibitem{gutierrez2026quantum}
{\sc X.~Guti{\'e}rrez, L.~Laneve, and M.~Sanz}, {\em Quantum eigenvalue
  transformations for arbitrary matrices}, arXiv preprint arXiv:2604.19688,
  (2026).

\bibitem{haah2016sample}
{\sc J.~Haah, A.~W. Harrow, Z.~Ji, X.~Wu, and N.~Yu}, {\em Sample-optimal
  tomography of quantum states}, in Proceedings of the Forty-Eighth Annual
  {ACM} Symposium on Theory of Computing, 2016, pp.~913--925.

\bibitem{harrow2009quantum}
{\sc A.~W. Harrow, A.~Hassidim, and S.~Lloyd}, {\em Quantum algorithm for
  linear systems of equations}, Phys. Rev. Lett., 103 (2009), p.~150502.

\bibitem{idier2013bayesian}
{\sc J.~Idier}, {\em {Bayesian} Approach to Inverse Problems}, John Wiley \&
  Sons, 2013.

\bibitem{jennings2025quantumalgorithmsgeneralnonlinear}
{\sc D.~Jennings, K.~Korzekwa, M.~Lostaglio, A.~T. Sornborger, Y.~Subasi, and
  G.~Wang}, {\em Quantum algorithms for general nonlinear dynamics based on the
  carleman embedding}, 2025, \url{https://arxiv.org/abs/2509.07155}.

\bibitem{jennings2026quantumkoopmanalgorithms}
{\sc D.~Jennings, K.~Korzekwa, M.~Lostaglio, and G.~Wang}, {\em Quantum koopman
  algorithms}, 2026, \url{https://arxiv.org/abs/2605.19054}.

\bibitem{JinLiu2024}
{\sc S.~Jin and N.~Liu}, {\em Analog quantum simulation of partial differential
  equations}, Quantum Sci. Technol., 9 (2024), p.~035047.

\bibitem{Jin2024Quantumalgorithmsfornonlinear}
{\sc S.~Jin and N.~Liu}, {\em Quantum algorithms for nonlinear partial
  differential equations}, Bulletin des Sciences Mathématiques, 194 (2024),
  p.~103457.

\bibitem{JinLiu2026}
{\sc S.~jin and N.~Liu}, {\em Quantum algorithms for viscosity solutions to
  nonlinear {H}amilton–{J}acobi equations based on an entropy penalization
  method}, Proc. Natl. Acad. Sci. U.S.A‌, 123 (2026), p.~e2607144123.

\bibitem{jin2026quantumalgorithmsyoungmeasures}
{\sc S.~Jin, N.~Liu, M.~Lukacova-Medvidova, and Y.~Yuan}, {\em Quantum
  algorithms for young measures: applications to nonlinear partial differential
  equations}, 2026, \url{https://arxiv.org/abs/2604.11825}.

\bibitem{jin2025inhomogeneous}
{\sc S.~Jin, N.~Liu, and C.~Ma}, {\em On {Schr{\"o}dingerization}-based quantum
  algorithms for linear dynamical systems with inhomogeneous terms}, SIAM J.
  Numer. Anal., 63 (2025), pp.~1861--1885.

\bibitem{jin2025schrodingerization}
{\sc S.~Jin, N.~Liu, and C.~Ma}, {\em {Schr{\"o}dingerization}-based
  computationally stable algorithms for ill-posed problems in partial
  differential equations}, SIAM J. Sci. Comput., 47 (2025), pp.~B976--B1000.

\bibitem{jin2023quantum}
{\sc S.~Jin, N.~Liu, and Y.~Yu}, {\em Quantum simulation of partial
  differential equations: Applications and detailed analysis}, Phys. Rev. A,
  108 (2023), p.~032603.

\bibitem{jin2024quantum}
{\sc S.~Jin, N.~Liu, and Y.~Yu}, {\em Quantum simulation of partial
  differential equations via {Schr{\"o}dingerization}}, Phys. Rev. Lett., 133
  (2024), pp.~230602--230602.

\bibitem{Joseph2020KoopmanvonNeumann}
{\sc I.~Joseph}, {\em Koopman–von neumann approach to quantum simulation of
  nonlinear classical dynamics}, Physical Review Research, 2 (2020).

\bibitem{li2025nearly}
{\sc L.~Li and J.~Luo}, {\em Nearly optimal circuit size for sparse quantum
  state preparation}, in 52nd International Colloquium on Automata, Languages,
  and Programming (ICALP 2025), vol.~334 of Leibniz International Proceedings
  in Informatics (LIPIcs), Schloss Dagstuhl -- Leibniz-Zentrum f{\"u}r
  Informatik, 2025, pp.~113:1--113:19.

\bibitem{Li2026}
{\sc X.~Li}, {\em A quantum path to partial differential equations}, arXiv
  preprint arXiv:2607.09639,  (2026).

\bibitem{liu2021efficient}
{\sc J.-P. Liu, H.~{\O}. Kolden, H.~K. Krovi, N.~F. Loureiro, K.~Trivisa, and
  A.~M. Childs}, {\em Efficient quantum algorithm for dissipative nonlinear
  differential equations}, Proc. Natl. Acad. Sci. U.S.A., 118 (2021),
  p.~e2026805118.

\bibitem{low2017optimal}
{\sc G.~H. Low and I.~L. Chuang}, {\em Optimal {Hamiltonian} simulation by
  quantum signal processing}, Phys. Rev. Lett., 118 (2017), p.~010501.

\bibitem{Low2019Hamiltonian}
{\sc G.~H. Low and I.~L. Chuang}, {\em Hamiltonian simulation by qubitization},
  Quantum, 3 (2019), p.~163.

\bibitem{low2025optimalquantumsimulationlinear}
{\sc G.~H. Low and R.~D. Somma}, {\em Optimal quantum simulation of linear
  non-unitary dynamics}, 2025, \url{https://arxiv.org/abs/2508.19238}.

\bibitem{low2026quantum}
{\sc G.~H. Low and Y.~Su}, {\em Quantum eigenvalue processing}, SIAM J.
  Comput., 55 (2026), pp.~135--215.

\bibitem{ma2026schrodingerization}
{\sc C.~Ma, S.~Jin, N.~Liu, K.~Wang, and L.~Zhang}, {\em
  {Schr{\"o}dingerization}-based quantum circuits for {Maxwell's} equations
  with time-dependent source terms}, J. Sci. Comput., 108 (2026), p.~43.

\bibitem{mao2024toward}
{\sc R.~Mao, G.~Tian, and X.~Sun}, {\em Toward optimal circuit size for sparse
  quantum state preparation}, Phys. Rev. A, 110 (2024), p.~032439.

\bibitem{MengZhongXu2024}
{\sc Z.~Meng, J.~Zhong, S.~Xu, et~al.}, {\em Simulating unsteady flows on a
  superconducting quantum processor}, Commun. Phys., 7 (2024), p.~349.

\bibitem{MontanaroPallister2016}
{\sc A.~Montanaro and S.~Pallister}, {\em Quantum algorithms and the finite
  element method}, Phys. Rev. A, 93 (2016), p.~032324.

\bibitem{motlagh2024generalized}
{\sc D.~Motlagh and N.~Wiebe}, {\em Generalized quantum signal processing}, PRX
  Quantum, 5 (2024), p.~020368.

\bibitem{nielsen2010quantum}
{\sc M.~A. Nielsen and I.~L. Chuang}, {\em Quantum Computation and Quantum
  Information}, Cambridge University Press, 2010.

\bibitem{ramacciotti2024simple}
{\sc D.~Ramacciotti, A.~I. Lefterovici, and A.~F. Rotundo}, {\em Simple quantum
  algorithm to efficiently prepare sparse states}, Phys. Rev. A, 110 (2024),
  p.~032609.

\bibitem{rasmussen2003gaussian}
{\sc C.~E. Rasmussen}, {\em {Gaussian} processes in machine learning}, in
  Summer School on Machine Learning, Springer, 2003, pp.~63--71.

\bibitem{SchadeBöschHaplaFichtner2024}
{\sc M.~Schade, C.~Bösch, V.~Hapla, and A.~Fichtner}, {\em A quantum computing
  concept for 1-d elastic wave simulation with exponential speedup}, Geophys.
  J. Int., 238 (2024), pp.~321--333.

\bibitem{MoralesPiraSchleich2025}
{\sc M.~Schade, C.~Bösch, V.~Hapla, and A.~Fichtner}, {\em Quantum linear
  system solvers: A survey of algorithms and applications}, Rev. Mod. Phys., 98
  (2026), p.~025005.

\bibitem{schuld2021quantum}
{\sc M.~Schuld and F.~Petruccione}, {\em Quantum models as kernel methods}, in
  Machine Learning with Quantum Computers, Springer, 2021, pp.~217--245.

\bibitem{stuart2010inverse}
{\sc A.~M. Stuart}, {\em Inverse problems: A {Bayesian} perspective}, Acta
  Numer., 19 (2010), pp.~451--559.

\bibitem{tarantola2005inverse}
{\sc A.~Tarantola}, {\em Inverse Problem Theory and Methods for Model Parameter
  Estimation}, SIAM, 2005.

\bibitem{wu2025quantum}
{\sc H.-C. Wu, J.~Wang, and X.~Li}, {\em Quantum algorithms for nonlinear
  dynamics: Revisiting {Carleman} linearization with no dissipative
  conditions}, SIAM J. Sci. Comput., 47 (2025), pp.~A943--A970.

\bibitem{ying2022stable}
{\sc L.~Ying}, {\em Stable factorization for phase factors of quantum signal
  processing}, Quantum, 6 (2022), p.~842.

\bibitem{zhang2022quantum}
{\sc X.-M. Zhang, T.~Li, and X.~Yuan}, {\em Quantum state preparation with
  optimal circuit depth: Implementations and applications}, Phys. Rev. Lett.,
  129 (2022), p.~230504.

\bibitem{zhao2019quantum}
{\sc Z.~Zhao, J.~K. Fitzsimons, and J.~F. Fitzsimons}, {\em Quantum-assisted
  {Gaussian} process regression}, Phys. Rev. A, 99 (2019), p.~052331.

\end{thebibliography}

\end{document}